\def\ps@pprintTitle{%
	\let\@oddhead\@empty
	\let\@evenhead\@empty
	\def\@oddfoot{\footnotesize\itshape
		{} \hfill\today}%
	\let\@evenfoot\@oddfoot
}
\newtheorem{theor}{Theorem}
\newtheorem{prop}[theor]{Proposition}
\newtheorem{cor}[theor]{Corollary}
\newtheorem{lemma}[theor]{Lemma}
\theoremstyle{definition} 
\newtheorem{defin}{Definition}
\newtheorem{rem}{Remark}
\newtheorem*{conv}{Convention}
\newtheorem{rems}{Remarks}
\newtheorem{ex}{Example}
\newtheorem{exs}[ex]{Examples}
\DeclareMathOperator{\Sym}{Sym}
\DeclareMathOperator{\id}{id}
\DeclareMathOperator{\Aut}{Aut}
\DeclareMathOperator{\Ret}{Ret}
\DeclareMathOperator{\Soc}{Soc}
\begin{document}

\begin{frontmatter}
	\title{Indecomposable involutive set-theoretic solutions of the Yang-Baxter equation and orthogonal dynamical extensions of cycle sets\tnoteref{mytitlenote}}
	\tnotetext[mytitlenote]{This work was partially supported by the Dipartimento di Matematica e Fisica ``Ennio De Giorgi" - Università del Salento. The authors are members of GNSAGA (INdAM).}
	\author[unile]{Marco~CASTELLI}
	\ead{marco.castelli@unisalento.it}
	\author[unile]{Francesco~ CATINO
	}
	\ead{francesco.catino@unisalento.it}
	\author [unile] {Paola~STEFANELLI}
	\ead{paola.stefanelli@unisalento.it}
		\cortext[c1]{Corresponding author}
	\address[unile]{Dipartimento di Matematica e Fisica ``Ennio De Giorgi"
		\\
		Universit\`{a} del Salento\\
		Via Provinciale Lecce-Arnesano \\
		73100 Lecce (Italy)\\}

\begin{abstract} 
Employing the algebraic structure of the left brace and the dynamical extensions of cycle sets, we investigate a class of indecomposable involutive set-theoretic solutions of the Yang-Baxter equation having specific imprimitivity blocks. Moreover, we study one-generator left braces of multipermutation level $2$.
\end{abstract}
\begin{keyword}
\texttt{cycle set\sep set-theoretic solution\sep Yang-Baxter equation\sep brace}
\MSC[2020] 16T25\sep 20N02\sep 20E22 \sep 81R50
\end{keyword}

\end{frontmatter}

\section{Introduction}

An intriguing challenge that attracts mathematicians in recent years is 
providing a complete description of the solutions to the quantum Yang-Baxter equation, a fundamental equation of Theoretical Physics.
The interest in this field of research has 
arisen from the paper by Drinfel'd \cite{drinfeld1992some}, where the author suggested the study of set-theoretic solutions.
A \emph{set-theoretic solution of the Yang-Baxter equation} is a pair $(X,r)$ where $X$ is a non-empty set and $r$ is a map from $X\times X$ into itself satisfying the braid relation
\begin{align*}
    (r\times\id_X)(\id_X\times r)(r\times\id_X)
    =
    (\id_X\times r)(r\times\id_X)(\id_X\times r).
\end{align*}
Writing the map $r$ as $r(x,y) = (\lambda_x(y), \rho_y(x))$, for all $x,y\in X$, where $\lambda_x$ and $\rho_y$ are maps from $X$ into itself, a solution $(X, r)$ is said to be \emph{non-degenerate} if $\lambda_x, \rho_y\in\Sym(X)$, for all $x,y\in X$.
A systematic investigation of set-theoretic solutions started in the late '90s, by Gateva-Ivanova and Van den Bergh \cite{gateva1998semigroups} and Etingof, Schedler, and Soloviev \cite{etingof1998set} who focused on the special class of non-degenerate involutive solutions, i.e., $r^2 = \id_{X\times X}$.  
For convenience, we call involutive non-degenerate solutions simply \textit{solutions} throughout the paper.

A conceptually simple but efficacy strategy for determining all solutions consists of constructing solutions starting from smaller ones. 
To implement this method, it is crucial finding solutions which can not be deconstructed 
into other ones, the so-called indecomposable solutions. 
Precisely, a solution $(X,r)$ is said to be \emph{decomposable} if there exists a partition $\{Y, Z\}$ of $X$ such that $r_{|_{Y\times Y}}$ and $r_{|_{Z\times Z}}$ are still solutions; otherwise, $(X,r)$ is called \textit{indecomposable}. In particular, Etingof, Schedler, and Soloviev in \cite{etingof1998set} proved that every decomposable solution $(X,r)$ can be obtained through two suitable solutions $(Y,s)$ and $(Z,t)$, with $Y$ and $Z$ proper subsets of $X$, $s=r_{|_{Y\times Y}}$, and $t=r_{|_{Z\times Z}}$. Moreover, they showed that the unique indecomposable solution having a prime number $p$ of elements is, up to isomorphism,  the pair $(\mathbb{Z}/p\mathbb{Z}, u)$ where $u(x,y):=(y-1,x+1)$, for all $x,y\in \mathbb{Z}/p\mathbb{Z}$.\\
In recent years, some authors used the links between solutions and algebraic structures to provide new descriptions of indecomposable solutions, as Chouraqui \cite{chouraqui2010garside}, Rump \cite{rump2020}, Smoktunowicz and  Smoktunowicz \cite{smock}, and Pinto together with the first two authors \cite{cacsp2018}.
In particular,  Smoktunowicz and  Smoktunowicz \cite{smock} found a striking connection between these solutions and left braces, a generalization of radical rings, introduced by Rump in \cite{rump2007braces}. As reformulated in \cite{cedo2014braces}, a set $A$ is said to be a \emph{left brace} if it is endowed of two operations $+$ and $\circ$ such that $(A,+)$ is an abelian group, $(A,\circ)$ a group, and 
\begin{align*}
    a\circ (b + c)+ a
    = a\circ b  + a\circ c
\end{align*} 
is satisfied, for all $a,b,c\in A$.
Later, the link between indecomposable solutions and left braces found in \cite{smock} has been developed by Rump \cite{rump2020}. 
Indeed, he showed that a complete description of left braces, on the one hand, and a suitable theory of coverings for solutions, on the other, allows to classify all indecomposable solutions. 
As a first result of this paper, we show that just Theorem $2$ of \cite{rump2020}, together with a result of Smocktunowitcz \cite[Theorem 19]{Sm18} on left braces with nilpotent multiplicative group, reduces the classification of finite indecomposable solutions with nilpotent regular permutation group to the ones having prime-power size.
Alongside these theoretical results, concrete families of indecomposable solutions have been provided by the first two authors together with Pinto \cite{cacsp2018} using Vendramin's dynamical cocycles \cite{vendramin2016extensions}.

Following a different approach, some authors studied indecomposable solutions focusing on the associated \emph{permutation group}, i.e., the group $\mathcal{G}(X,r)$ generated by the set $\{\lambda_x \ | \ x\in X\}$.
Specifically, one of the main objectives is to describe all indecomposable solutions having a specific permutation group. Actually, Pinto, Rump and the first author \cite{capiru2020} developed a method to construct all indecomposable solutions having prime-power size and with cyclic permutation group. In addition, they constructed, distinguishing the isomorphism classes, the indecomposable ones with cardinality $pq$ (where $p$ and $q$ are prime numbers not necessarily distinct) and abelian permutation group. Another remarkable result was recently obtained by Rump, who gave a theoretical classification, via indecomposable cycle sets and cocyclic left braces, all the solutions (not necessarily indecomposable) having cyclic permutation group \cite{Ru20}.
Let us recall that a set $X$ with a binary operation $\cdot$ is called a \emph{cycle set} if each left multiplication $\sigma_x:X\longrightarrow X,\; y\mapsto x\cdot y$ is bijective and
\begin{equation}\label{cicloide}
(x\cdot y)\cdot (x\cdot z)= (y\cdot x)\cdot (y\cdot z) 
\end{equation}
holds, for all $x,y,z\in X$.
Rump introduced this structure in \cite{rump2005decomposition} precisely to investigate solutions. Indeed, he found a one-to-one correspondence between solutions and the class of \emph{non-degenerate} cycle sets.
Up until now, the study of non-degenerate cycle sets has been allowed for obtaining descriptions and constructions of new interesting families of solutions, as one can see in  \cite{rump2016quasi,bon2019,cacsp2017}. 
However, some authors continued the study of indecomposable solutions without using cycle sets. 
Ced\'o, Jespers, and Okni\'nski \cite{CeJeOk20x}, following a suggestion due to Ballester-Bolinches \cite{JeLeRuVe19}, classified indecomposable solutions with primitive permutation group, showing that this class of solutions coincides with the indecomposable ones having a prime number of elements and extending the partial results obtained by Pinto in \cite[Chapter 4]{pinto}. 
In a recent work, Jedli{\v{c}}ka, Pilitowska, and Zamojska-Dzienio \cite{JePiZa20x} concretely classified the indecomposable solutions having abelian permutation group and multipermutation level $2$, extending some results contained in \cite{capiru2020}. These works naturally open a new perspective on the investigations of all indecomposable solutions. Indeed, one can try to describe the indecomposable solutions which have not prime size using further information coming from the imprimitivity blocks. Besides, since all the multipermutation indecomposable solutions provided in \cite{JePiZa20x,capiru2020,cacsp2018,CeJeOk20x} have multipermutation level at most $2$, except \cite[Example 3]{capiru2020}, a first step in this sense can be the concrete construction of indecomposable solutions having greater multipermutation level.

In this spirit, the main aim of this paper is to investigate a class of indecomposable solutions obtained by dynamical cocycles of cycle sets. In particular, we study the dynamical extensions $X\times_{\alpha} S$ for which the complete blocks system $\mathcal{B}_X$ induced by $X$ has an orthogonal system of blocks. For this reason, we call this extensions \textit{orthogonal}.  As one can find in \cite[Definition 2.9]{DoKo09},
given an imprimitive group $G$ acting on a set $X$ of size $mn$, for certain positive integers $n$ and $m$ greater than $1$, we say that two systems of blocks $\mathcal{A}=\{A_1,\ldots,A_m \}$ and $\mathcal{B}=\{B_1,\ldots,B_n \}$ of size $m$ and $n$, respectively, are \textit{orthogonal} if $|A_i\cap B_j|=1$, for all $i\in \{1,\ldots,m\}$ and $j\in \{1,\ldots,n\}$ (for some remarkable results involving orthogonal systems of blocks see, for instance, \cite{Lu91,Do06}). 
In this context, we pay particular attention to the constant dynamical extensions. This class of extensions was recently investigated by a cohomological point of view by Lebed and Vendramin \cite{lebed2017homology}. Following a different approach, using a property of the orthogonal systems of blocks, we show that orthogonal constant dynamical extensions can be constructed by particular semidirect products of cycle sets, a construction of cycle sets introduced by Rump \cite{Ru08}. 
As one can expect, orthogonal dynamical extensions are also related to the semidirect product of left braces. In this regard, the powerful of left braces to check this class of cycle sets becomes clear in the further description of these extensions which we provide.
 
As an application of our results, we give several instances of indecomposable solutions. A lot of these are different from those already contained in \cite{cacsp2018, capiru2020, Ru20,JePiZa20x}. Specifically, we construct indecomposable cycle sets (and so indecomposable solutions) having abelian non-cyclic permutation group and multipermutation level $3$. Moreover, we use the class of \textit{latin} cycle sets, i.e., the cycle sets that also are quasigroups recently studied in \cite{bon2019}, to provide a construction of orthogonal dynamical extensions which give rise to indecomposable irretractable solutions different from the ones obtained in \cite{cacsp2018}.\\
In the last part of this paper, which is self-contained with respect to the previous ones, we consider the class of one-generator left braces, which are closely related to indecomposable solutions (see \cite{smock,rump2020one} for more details). Recently, Rump showed several results involving one-generator left braces: in particular, he described one-generator left braces \cite[Theorem 2]{rump2020one} of multipermutation level at most $2$, giving an answer to \cite[Question 5.5]{smock}. At this end, he introduced a left brace structure over the group ring $\mathbb{Z}[G]$, where $G$ is the infinite cyclic group. Following a different approach, we study one-generator left braces of multipermutation level $2$  
focusing on the map $\lambda$. 
We show that, if $A$ is an arbitrary one-generator left brace of multipermutation level $2$, then the group $\lambda(A)$ is cyclic. As a consequence of this fact, we finally provide a proof of \cite[Theorem 2]{rump2020one} which does not make use of the structure of the left brace over the group ring $\mathbb{Z}[G]$.

\section{Basic results}


In this section, we mainly recall some basics on cycle sets and left braces that are useful throughout the paper. 
In addition, using some well-known results on left braces, we describe the structure  of uniconnected cycle sets with nilpotent permutation group.

\subsection{Solutions of the Yang-Baxter equation and cycle sets}
In \cite{rump2005decomposition}, Rump found a one-to-one correspondence between solutions and a special class of cycle sets, i.e., \emph{non-degenerate} cycle sets.
To illustrate this correspondence, let us firstly recall the following definition.

\begin{defin}[p. 45, \cite{rump2005decomposition}]
A pair $(X,\cdot)$ is said to be a cycle set if each left multiplication $\sigma_x:X\longrightarrow X,$ $y\mapsto x\cdot y$ is invertible and 
$$(x\cdot y)\cdot (x\cdot z)=(y\cdot x)\cdot (y\cdot z), $$
for all $x,y,z\in X$.  Moreover, a cycle set $(X,\cdot)$ is called \textit{non-degenerate} if the squaring map $x\mapsto x\cdot x$ is bijective.
\end{defin}

\begin{conv}
All the cycle sets are non-degenerate throughout the paper.
\end{conv}

\begin{prop}[Propositions 1-2, \cite{rump2005decomposition}]\label{corrisp}
Let $(X,\cdot)$ be a cycle set. Then the pair $(X,r)$, where $r(x,y):=(\sigma_x^{-1}(y),\sigma_x^{-1}(y)\cdot x)$, for all $x,y\in X$, is a solution of the Yang-Baxter equation which we call the associated solution to $(X,\cdot)$. Moreover, this correspondence is one-to-one.
\end{prop}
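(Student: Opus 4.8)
The plan is to unpack the definition of $r$ into its two component maps and verify, in turn, that $r$ is involutive, non-degenerate, and satisfies the braid relation, and finally that the assignment is a bijection onto the class of solutions. Throughout I write $r(x,y)=(\lambda_x(y),\rho_y(x))$, where $\lambda_x=\sigma_x^{-1}$ and $\rho_y(x)=\sigma_x^{-1}(y)\cdot x=\lambda_x(y)\cdot x$.

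First I would check involutivity. Setting $(u,v)=r(x,y)$, so that $u=\sigma_x^{-1}(y)$ and $v=u\cdot x=\sigma_u(x)$, a direct computation gives $\sigma_u^{-1}(v)=\sigma_u^{-1}(\sigma_u(x))=x$ and then $\sigma_u^{-1}(v)\cdot u=x\cdot\sigma_x^{-1}(y)=\sigma_x(\sigma_x^{-1}(y))=y$, whence $r(u,v)=(x,y)$ and $r^2=\id_{X\times X}$. Notice that this step uses only the bijectivity of the left multiplications, not the cycle set identity.

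Next, non-degeneracy. That each $\lambda_x=\sigma_x^{-1}$ lies in $\Sym(X)$ is immediate from the cycle set axiom. The bijectivity of each $\rho_y$ is the delicate point, and it is exactly here that the standing non-degeneracy hypothesis, namely the bijectivity of the squaring map $x\mapsto x\cdot x$, must be used; I expect to show that, for fixed $y$, the map $x\mapsto\lambda_x(y)\cdot x$ is bijective by expressing it through the squaring map and the $\sigma$'s, so that its invertibility reduces to that of the squaring map.

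The core of the argument is the braid relation. Here the strategy is to rewrite the cycle set identity \eqref{cicloide} as the operator identity $\sigma_{x\cdot y}\,\sigma_x=\sigma_{y\cdot x}\,\sigma_y$, and to translate it, via $\sigma_x=\lambda_x^{-1}$, into the equivalent $\lambda$-form $\lambda_x\,\lambda_{\lambda_x^{-1}(y)}=\lambda_y\,\lambda_{\lambda_y^{-1}(x)}$, which is precisely the condition equivalent to the braid relation for an involutive non-degenerate $r$. I expect the main obstacle to be the bookkeeping in this translation: one must carefully match the threefold composites defining the braid relation against the substitutions $u=\sigma_x^{-1}(y)$, $v=u\cdot x$ used above, keeping track of which argument each $\sigma$ acts on. Finally, for the one-to-one claim I would exhibit the inverse construction: given a solution $(X,r)$ with $r(x,y)=(\lambda_x(y),\rho_y(x))$, set $x\cdot y:=\lambda_x^{-1}(y)$. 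Involutivity forces $\rho_y(x)=\lambda_{\lambda_x(y)}^{-1}(x)$, so $r$ is determined by $\lambda$ alone; reversing the computations above shows that $(X,\cdot)$ is a non-degenerate cycle set and that the two assignments are mutually inverse, yielding the desired bijection.
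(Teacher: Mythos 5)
The paper does not prove this proposition at all: it is recalled verbatim from Rump's work (Propositions 1--2 of \cite{rump2005decomposition}), so there is no in-paper argument to compare against. Your outline is the standard proof of that result and is correct: the involutivity computation is right and indeed uses only bijectivity of the $\sigma_x$, the translation of \eqref{cicloide} into $\lambda_x\lambda_{\lambda_x^{-1}(y)}=\lambda_y\lambda_{\lambda_y^{-1}(x)}$ is the correct equivalent of the braid relation for an involutive left non-degenerate $r$, and you correctly locate the role of the squaring map (the paper's standing non-degeneracy convention) in the bijectivity of the $\rho_y$; the only remaining work is the routine verification of the two equivalences you explicitly defer.
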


Recall that a first useful tool to construct new solutions, introduced in \cite{etingof1998set}, is the so-called \textit{retract relation}, an equivalence relation on $X$ which we denote by $\sim_r$. Precisely, if $(X,r)$ is a solution, then $x\sim_r y$ if and only if $\lambda_x=\lambda_y$, for all $x,y\in X$. In this way, $(X,r)$ induces another solution, having the quotient $X/\sim_r$ as underlying set, which is named \textit{retraction} of $(X,r)$ and is denoted by $\Ret(X, r)$. 
As one can expect, the retraction of a solution corresponds to the retraction of a non-degenerate cycle set. Specifically, in \cite{rump2005decomposition} Rump showed that the binary relation $\sim_\sigma$ on $X$ given by 
$$x\sim_\sigma y :\Longleftrightarrow \sigma_x = \sigma_y$$ 
for all $x,y\in X$, is a congruence of $(X,\cdot)$ and he proved that the quotient $X/\sim$, which we denote by $\Ret(X)$, is a cycle set whenever $X$ is non-degenerate and he called it the \emph{retraction} of $(X,\cdot)$. As the name suggests, if $(X, \cdot)$ is the cycle set associated to a solution $(X,r)$, then the retraction $\Ret(X)$ is the cycle set associated to $\Ret(X,r)$. Besides, a cycle set $X$ is said to be \textit{irretractable} if $\Ret(X)=X$, otherwise it is called \textit{retractable}.

\smallskip

The following definition is of crucial importance for our scopes.

\begin{defin}
    A cycle set $X$ has \textit{multipermutation level $n$} if $n$ is the minimal non-negative integer such that $\Ret^n(X)$ has cardinality one, where 
    $$\Ret^0(X):=X \text{ \ and \ } \Ret^i(X):=\Ret(\Ret^{i-1}(X)) \text{, \ for }i>0. $$  
\end{defin}
\medskip

\noindent Clearly, a cycle set of multipermutation level $n$ is retractable, but the converse is not necessarily true.\\

\noindent For a cycle set $X$, the permutation group generated by the set $\{\sigma_x \, | \, x\in X\}$ will be denoted by $\mathcal{G}(X)$ and we call it the \textit{associated permutation group}. Obviously, in terms of solutions, the associated permutation group is exactly the permutation group generated by the set $\{\lambda_x \, | \, x\in X\}$.
\medskip

\noindent Our attention is mainly posed on cycle sets that are indecomposable. 
\begin{defin}
A cycle set $(X,\cdot)$ is said to be \textit{indecomposable} if the permutation group $\mathcal{G}(X)$ acts transitively on $X$. Moreover, $X$ is said to be \textit{uniconnected} if $\mathcal{G}(X)$ acts regularly on $X$, i.e., $\mathcal{G}(X)$ acts freely and transitively on $X$.
\end{defin}

\noindent Note that a solution $(X,r)$ is indecomposable if and only if the associated cycle set $(X,\cdot)$ is indecomposable. In the rest of the paper, we will study indecomposable solutions by their associated cycle sets. In every case, all the results involving cycle sets can be translated in terms of solutions by \cref{corrisp}.

\medskip

\noindent A particular family of indecomposable cycle sets that has been considered by some authors (see, for example, \cite{etingof1998set,capiru2020}) is that of cycle sets having cyclic permutation group. In \cite{capiru2020}, a method to construct all the indecomposable cycle sets of prime-power size with cyclic permutation group and multipermutation level greater than $1$ was developed.

\begin{theor}[Theorems 8 - 9, \cite{capiru2020}]\label{costruz2}
Let $p$ be a prime number, $X:=\{0,\dots,p^k-1\}$, $n\in \mathbb{N}\setminus\{1\}$, $j_0,\dots,j_{n}\in \mathbb{N}\cup\{0\}$ such that $j_{n}=0$, $j_0=k$, $j_i<j_{i-1}$, for every $i\in \{1,\dots,n\}$, and $\{f_i\}_{i\in \{1,\dots,n-1\}}$ a set of maps such that
$$f_{i}:\mathbb{Z}/p^{j_i}\mathbb{Z}\longrightarrow \{0,\dots,p^{j_{i-1}}/p^{j_i}-1\}$$
$f_{i}(0)=0$, for every $i\in \{1,\dots,n-1 \}$, and the map
$$\varphi_{i}:\{0,\dots,p^{j_{i}}-1\}\longrightarrow \{1,\dots,p^{j_{i-1}}-1\}$$
$$l\mapsto 1+p^{j_{n-1}}f_{n-1}(l)+\cdots+p^{j_{i}}f_{i}(l)$$
is injective, for every $i\in \{1,\dots,n-1\}$. Moreover, set $\psi:=(0\;\dots\;p^k-1)$ and
 \begin{equation}
\sigma_{i}:=\psi^{1+p^{j_{n-1}}f_{n-1}(i)+\cdots+p^{j_1}f_{1}(i)},
\end{equation}
for every $i\in X$. Define 
$$K_{j,i}:=j+1+p^{n-1}f_{n-1}(i)+\cdots+p^{j_2}f_2(i)$$
and
$$Q_{j,i}:= p^{j_{n-1}}f_{n-1}(i)+\cdots+p^{j_1}f_1(i)+p^{j_{n-1}}f_{n-1}(K_{j,i})+\cdots+p^{j_1}f_1(K_{j,i}),$$
for every $i,j\in X$, and suppose that $Q_{i,j}\equiv Q_{j,i}\;(mod\;p^k)$ for every $i,j\in \{0,\ldots,p^k-1\}$.\\
Then $X$ is an indecomposable cycle set of level $n$ and cyclic permutation group $<\psi>$ such that $|\Ret^i(X)|=p^{j_i}$, for every $i\in\{0,\ldots,n\}$.\\
Conversely, every indecomposable cycle set with cyclic permutation group arises in this way.
\end{theor}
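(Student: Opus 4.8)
The plan is to encode the entire cycle set arithmetically and to translate every requirement into a congruence modulo $p^k$. Since $\mathcal{G}(X)=\langle\psi\rangle$ is cyclic and $\psi$ acts on $X\cong\mathbb{Z}/p^k\mathbb{Z}$ as the translation $a\mapsto a+1$, writing $E(i):=1+p^{j_{n-1}}f_{n-1}(i)+\cdots+p^{j_1}f_1(i)$ (so that $\sigma_i=\psi^{E(i)}$) turns each left multiplication into $\sigma_i(a)=a+E(i)\pmod{p^k}$. Because $\langle\psi\rangle$ is abelian, the cycle set law $\sigma_{\sigma_i(j)}\sigma_i=\sigma_{\sigma_j(i)}\sigma_j$ collapses to the single exponent identity
\begin{equation*}
E\bigl(j+E(i)\bigr)+E(i)\equiv E\bigl(i+E(j)\bigr)+E(j)\pmod{p^k}.
\end{equation*}
All of non-degeneracy, indecomposability and the retraction tower will then be read off from the function $E$.

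\textbf{Forward direction.} The decisive observation is that $S(i):=E(i)-1$ is insensitive to high $p$-adic digits: since each $f_m$ depends on its argument only modulo $p^{j_m}$ and $p^{j_m}\mid p^{j_1}$, one has $j+E(i)=K_{j,i}+p^{j_1}f_1(i)\equiv K_{j,i}\pmod{p^{j_m}}$ for every $m$, whence $S\bigl(j+E(i)\bigr)=S(K_{j,i})$. Substituting this into the displayed identity shows that the cycle set law is \emph{equivalent} to $S(i)+S(K_{j,i})\equiv S(j)+S(K_{i,j})\pmod{p^k}$, that is, to the standing hypothesis $Q_{i,j}\equiv Q_{j,i}$; this equivalence is the crux of the argument. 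Non-degeneracy then comes for free, as $X$ is finite; indecomposability follows because $f_m(0)=0$ gives $E(0)=1$, hence $\sigma_0=\psi$ and $\mathcal{G}(X)=\langle\psi\rangle$ acts transitively, indeed regularly.

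For the retraction I would compute $\sim_\sigma$ directly. Since $S(i)$ depends on $i$ only through $i\bmod p^{j_1}$, the values $E(i)$ lie in $\{1,\dots,p^{k}-1\}$ and $i\mapsto E(i)$ factors as $i\mapsto\varphi_1(i\bmod p^{j_1})$; injectivity of $\varphi_1$ then yields $\sigma_i=\sigma_j\iff i\equiv j\pmod{p^{j_1}}$, so $\Ret(X)\cong\mathbb{Z}/p^{j_1}\mathbb{Z}$ has $p^{j_1}$ elements. Reducing the exponents modulo $p^{j_1}$ shows that $\Ret(X)$ is again an instance of the construction, now with data $j_1>\cdots>j_n$ and maps $f_2,\dots,f_{n-1}$; an induction on $n$ then gives $|\Ret^i(X)|=p^{j_i}$ for all $i$, and since $j_n=0$ while the $j_i$ strictly decrease, the multipermutation level is exactly $n$.

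\textbf{Converse.} Here I would work within the stated scope (prime-power size $p^k$, cyclic permutation group, level $n>1$). Transitivity of the abelian group $\mathcal{G}(X)$ forces a regular action, so $X$ may be identified with $\mathbb{Z}/p^k\mathbb{Z}$ and, after choosing a generator among the $\sigma_x$ and normalising the base point, with $\psi=(0\ \dots\ p^k-1)$ and $\sigma_0=\psi$; this yields $E\colon\mathbb{Z}/p^k\mathbb{Z}\to\mathbb{Z}/p^k\mathbb{Z}$ with $E(0)=1$. The retraction tower, whose successive quotients are again indecomposable cycle sets with cyclic permutation group of $p$-power order, has sizes $p^{j_i}$ which define the sequence $j_i$; peeling off one retraction at a time recovers $S(i)=E(i)-1$ as a nested expansion $\sum_m p^{j_m}f_m(i)$ in which $f_m$ depends only on $i\bmod p^{j_m}$, the faithfulness of each layer being precisely the injectivity of $\varphi_i$ and the normalisation giving $f_m(0)=0$. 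Finally the equivalence established above turns the cycle set law for $X$ into $Q_{i,j}\equiv Q_{j,i}$, so the reconstructed data meets all the hypotheses. The main obstacle is exactly this reconstruction: proving that the exponent function $E$ is genuinely forced into the nested layered shape by the tower of retractions, i.e.\ that the cocycle gluing two consecutive layers is given by a single map $f_m$ of the asserted domain and range, which is where the induction on $n$ does its real work.
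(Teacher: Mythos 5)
This statement is imported verbatim from \cite{capiru2020} (Theorems 8--9 there); the present paper gives no proof of it, so there is nothing in-house to compare against. Your argument is essentially the one used in the original source: encode everything through the exponent function $E(i)=1+\sum_m p^{j_m}f_m(i)$, observe that the cycle set law for translations collapses to $E(j+E(i))+E(i)\equiv E(i+E(j))+E(j)\pmod{p^k}$, use the fact that each $f_m$ only sees its argument modulo $p^{j_m}$ (and $j_m\le j_1$) to replace $j+E(i)$ by $K_{j,i}$, which turns that congruence into exactly $Q_{i,j}\equiv Q_{j,i}$, and then run an induction along the retraction tower. The key identity $S(j+E(i))=S(K_{j,i})$ is correct and is indeed the crux. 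Two spots deserve one more line each: (i) in the converse, the claim that the retract classes are precisely the congruence classes modulo some $p^{j_1}$ needs the observation that $\sim_\sigma$ is a congruence, hence its classes form a block system for the regular action of $\mathbb{Z}/p^k\mathbb{Z}$, and blocks of a regular cyclic action are cosets of subgroups; (ii) ``non-degeneracy comes for free'' is true but rests on Rump's theorem that finite cycle sets are automatically non-degenerate (alternatively, bootstrap $i+E(i)\equiv i'+E(i')$ down the chain $p^{j_{n-1}},p^{j_{n-2}},\dots$ to get injectivity of the squaring map directly). With those remarks the sketch is sound and matches the intended scope of the converse (prime-power size, level $>1$).
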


\subsection{Solutions of the Yang-Baxter equation and left braces}
At first, we introduce the following definition that, as observed in \cite{cedo2014braces}, is equivalent to the original introduced by Rump in \cite{rump2007braces}.

\begin{defin}[\cite{cedo2014braces}, Definition 1]
A set $A$ endowed of two operations $+$ and $\circ$ is said to be a \textit{left brace} if $(A,+)$ is an abelian group, $(A,\circ)$ a group, and
\begin{align*}
    a\circ (b + c) + a
    = a\circ b + a\circ c,
\end{align*}
for all $a,b,c\in A$.
\end{defin}

Given a left brace $A$ and $a\in A$, let us denote by $\lambda_a:A\longrightarrow A$ the map from $A$ into itself defined by
\begin{align}\label{eq:gamma}
    \lambda_a(b):= - a + a\circ b,
\end{align} 
for all $b\in A$. 
As shown in \cite[Proposition 2]{rump2007braces} and \cite[Lemma 1]{cedo2014braces}, these maps have special properties. We recall them in the following proposition.
\begin{prop}\label{action}
Let $A$ be a left brace. Then, the following are satisfied: 
\begin{itemize}
\item[1)] $\lambda_a\in\Aut(A,+)$, for every $a\in A$;
\item[2)] the map $\lambda:A\longrightarrow \Aut(A,+)$, $a\mapsto \lambda_a$ is a group homomorphism from $(A,\circ)$ into $\Aut(A,+)$.
\end{itemize}
\end{prop}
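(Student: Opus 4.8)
The plan is to extract everything from the single brace identity $a\circ(b+c)+a = a\circ b + a\circ c$ together with the group axioms, establishing the homomorphism property (part 2) first and then reading off bijectivity in part 1 as an immediate consequence. Before anything else I would pin down the interaction of the two operations: putting $b=0$ (the additive identity) in the brace relation gives $a\circ c + a = a\circ 0 + a\circ c$, and cancelling $a\circ c$ in the abelian group $(A,+)$ yields $a\circ 0 = a$ for every $a$. Thus $0$ is a right identity for $\circ$, so by uniqueness of the identity in the group $(A,\circ)$ it coincides with the multiplicative identity; in particular $\lambda_0 = \id$ and $\lambda_a(0)=0$.

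For additivity of a fixed $\lambda_a$ I would rewrite the brace relation as $a\circ(b+c) = a\circ b + a\circ c - a$ and substitute it into the definition, obtaining $\lambda_a(b+c) = -a + a\circ(b+c) = (-a + a\circ b) + (-a + a\circ c)$, where the regrouping is legitimate because $(A,+)$ is abelian. This is exactly $\lambda_a(b)+\lambda_a(c)$, so $\lambda_a$ is an endomorphism of $(A,+)$.

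The core of part 2 is the composition law $\lambda_{a\circ b} = \lambda_a \circ \lambda_b$. Expanding $\lambda_a(\lambda_b(c)) = -a + a\circ(-b + b\circ c)$ and applying the brace relation to the argument $-b + b\circ c$ reduces the claim, after using associativity of $\circ$ and commutativity of $+$, to the purely \emph{linear} identity $-a + a\circ(-b) - a = -(a\circ b)$. To clinch this I would first compute $a\circ(-b)$: applying the brace relation to $b + (-b) = 0$ gives $a + a = a\circ b + a\circ(-b)$, whence $a\circ(-b) = a + a - a\circ b$; substituting this collapses the identity to $-(a\circ b)$ as required. This shows $\lambda$ is a homomorphism from $(A,\circ)$ into the monoid of additive endomorphisms.

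Finally, bijectivity in part 1 becomes automatic: since $a\circ a^{-1} = a^{-1}\circ a = 0$, the composition law forces $\lambda_a \circ \lambda_{a^{-1}} = \lambda_{a^{-1}} \circ \lambda_a = \lambda_0 = \id$, so each $\lambda_a$ is invertible with inverse $\lambda_{a^{-1}}$ and therefore lies in $\Aut(A,+)$; this simultaneously upgrades the homomorphism of part 2 to one landing in $\Aut(A,+)$. I expect the main obstacle to be the composition law, specifically producing and correctly placing the auxiliary identity $a\circ(-b) = a + a - a\circ b$ while keeping track of the fact that $\circ$ need not be commutative even though $+$ is.
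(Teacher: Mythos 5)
Your proof is correct: the auxiliary identities $a\circ 0=a$ and $a\circ(-b)=a+a-a\circ b$ are exactly what is needed, and the verification of $\lambda_{a\circ b}=\lambda_a\lambda_b$ followed by $\lambda_a\lambda_{a^{-1}}=\lambda_0=\id$ is sound. The paper itself gives no proof, citing \cite[Proposition 2]{rump2007braces} and \cite[Lemma 1]{cedo2014braces}, and your computation is essentially the standard argument found there.
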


\noindent The map $\lambda$ is of crucial importance to construct solutions of the Yang-Baxter equation using left braces, as one can see in the following proposition.

\begin{prop}[Lemma 2, \cite{cedo2014braces}]
Let A be a left brace and $r:A\times A\rightarrow A\times A$ the map given by
$$
r(a,b):=(\lambda_a(b),\lambda^{-1}_{\lambda_a(b)}(a)), $$
for all $a,b\in A$. Then, $(A,r)$ is a solution of the Yang-Baxter equation which we call the associated solution to the left brace $A$.
\end{prop}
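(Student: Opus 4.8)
The plan is to derive the statement from \cref{corrisp} by manufacturing, out of the brace structure, a non-degenerate cycle set on $A$ whose associated solution is precisely $r$. Concretely, I would define a binary operation on $A$ by $a\cdot b:=\lambda_a^{-1}(b)$, so that the left multiplication $\sigma_a$ equals $\lambda_a^{-1}$. By part 1) of \cref{action} each $\lambda_a$ is an automorphism of $(A,+)$, hence $\sigma_a$ is a bijection, which is the first cycle set requirement. The bridge between the two operations that I would use repeatedly is the identity $a\circ b=a+\lambda_a(b)$, obtained at once by rearranging the definition \eqref{eq:gamma} of $\lambda_a$.

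The core computation is the cycle set identity $(a\cdot b)\cdot(a\cdot c)=(b\cdot a)\cdot(b\cdot c)$. Unravelling the definition, its two sides are $\lambda_{\lambda_a^{-1}(b)}^{-1}\lambda_a^{-1}(c)$ and $\lambda_{\lambda_b^{-1}(a)}^{-1}\lambda_b^{-1}(c)$, so it suffices to establish the operator equality $\lambda_a\lambda_{\lambda_a^{-1}(b)}=\lambda_b\lambda_{\lambda_b^{-1}(a)}$. Here part 2) of \cref{action} is decisive: since $\lambda$ is a homomorphism from $(A,\circ)$, the left-hand side equals $\lambda_{a\circ\lambda_a^{-1}(b)}$, and the bridge identity gives $a\circ\lambda_a^{-1}(b)=a+b$. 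Symmetrically the right-hand side equals $\lambda_{b+a}$, and the two agree because $(A,+)$ is abelian. This is the only place where the compatibility axiom of the brace and the commutativity of $+$ enter, and it is the conceptual crux of the argument.

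It then remains to check that $(A,\cdot)$ is non-degenerate, i.e. that the squaring map $a\mapsto a\cdot a=\lambda_a^{-1}(a)$ is bijective; equivalently, using $a\circ(a\cdot a)=a+a$, that $a\mapsto a^{-1}\circ(a+a)$ is a bijection of $A$, where $a^{-1}$ denotes the inverse in $(A,\circ)$. I expect this bijectivity to be the main obstacle, since it is the one point not formally forced by the cycle set axiom and, for infinite $A$, is not automatic; I would dispatch it either by exhibiting the inverse map explicitly or by invoking the standard fact that the cycle set associated to a brace is non-degenerate. Once $(A,\cdot)$ is known to be a non-degenerate cycle set, \cref{corrisp} applies and yields that its associated solution is a (necessarily involutive and non-degenerate) solution of the Yang-Baxter equation. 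Finally I would simply read off that this associated solution coincides with $r$: as $\sigma_a^{-1}=\lambda_a$, one has $\sigma_a^{-1}(b)=\lambda_a(b)$ and $\sigma_a^{-1}(b)\cdot a=\sigma_{\lambda_a(b)}(a)=\lambda_{\lambda_a(b)}^{-1}(a)$, so the associated solution is exactly $(a,b)\mapsto\bigl(\lambda_a(b),\lambda_{\lambda_a(b)}^{-1}(a)\bigr)$, which is the map $r$ of the statement.
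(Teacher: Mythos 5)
Your argument is correct, but it is worth noting that the paper itself offers no proof of this statement: it is imported verbatim as Lemma~2 of \cite{cedo2014braces}, where it is established by directly verifying involutivity, non-degeneracy and the braid relation for $r$ from the brace axioms. Your route is genuinely different and better adapted to this paper's framework: you observe that $a\cdot b:=\lambda_a^{-1}(b)$ makes $A$ a cycle set (the crux being $\lambda_a\lambda_{\lambda_a^{-1}(b)}=\lambda_{a\circ\lambda_a^{-1}(b)}=\lambda_{a+b}=\lambda_{b+a}$, which uses exactly parts 1) and 2) of \cref{action} together with commutativity of $+$), and then read the solution off from \cref{corrisp}; this replaces the three separate verifications of the cited source by a single operator identity plus an appeal to Rump's correspondence, and it makes transparent where each brace axiom enters. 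The one point you leave dangling --- bijectivity of the squaring map $a\mapsto a\cdot a=\lambda_a^{-1}(a)$ --- closes in one line: from $0=a\circ a^-=a+\lambda_a(a^-)$ one gets $a^-=-\lambda_a^{-1}(a)$, so the squaring map is the composition of inversion in $(A,\circ)$ with negation in $(A,+)$, both bijections. With that inserted, your proof is complete; it is not the paper's (nonexistent) proof, nor the direct computation of \cite{cedo2014braces}, but a clean derivation from \cref{corrisp} and \cref{action}.
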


For the following definition, we refer the reader to \cite[p. 160]{rump2007braces} and \cite[Definition 3]{cedo2014braces}.
\begin{defin}
Let $A$ be a left brace. A subset $I$ of $A$ is said to be a \textit{left ideal} if it is a subgroup of the multiplicative group and $\lambda_a(I)\subseteq I$, for every $a\in A$. Moreover, a left ideal is an \textit{ideal} if it is a normal subgroup of the multiplicative group.
\end{defin}
\noindent Given an ideal $I$, it holds that the structure $A/I$ is a left brace called the \emph{quotient left brace} of $A$ modulo $I$.
%
\medskip

More in general, examples of left ideals can be found easily in a finite left brace, as the following proposition shows (see \cite[p. 11]{bachiller2015extensions}).
\begin{prop}\label{rideal}
Let $A$ be a finite left brace and $p$ a prime number dividing $|A|$. Then, the $p$-Sylow subgroup $A_p$ of the additive group $(A,+)$ is a left ideal of $A$.
\end{prop}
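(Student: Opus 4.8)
The plan is to verify directly the two conditions in the definition of a left ideal: that $A_p$ is a subgroup of the multiplicative group $(A,\circ)$ and that $\lambda_a(A_p)\subseteq A_p$ for every $a\in A$. The starting observation is that, since $(A,+)$ is a finite abelian group, its Sylow $p$-subgroup is unique, namely the $p$-primary component
\[
A_p=\{a\in A \ | \ p^n\, a=0 \text{ for some } n\in\mathbb{N}\},
\]
the set of all elements whose additive order is a power of $p$. In particular $A_p$ is a characteristic subgroup of $(A,+)$, and this single fact drives the whole argument.

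The invariance under the maps $\lambda_a$ is then immediate. By \cref{action} each $\lambda_a$ lies in $\Aut(A,+)$, and any additive automorphism preserves the order of elements; since $A_p$ is precisely the set of elements of $p$-power order, we obtain $\lambda_a(A_p)=A_p$, hence in particular $\lambda_a(A_p)\subseteq A_p$, as required.

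It remains to prove that $A_p$ is a subgroup of $(A,\circ)$, which I regard as the only genuinely substantive point, since the two group operations on $A$ are a priori unrelated. Rewriting \eqref{eq:gamma} in the form $a\circ b=a+\lambda_a(b)$, I would take $a,b\in A_p$ and note that $\lambda_a(b)\in\lambda_a(A_p)=A_p$ by the previous step, so that $a\circ b=a+\lambda_a(b)\in A_p$ because $A_p$ is closed under $+$. Thus $A_p$ is closed under $\circ$. Since $0\in A_p$ and $A$ is finite, $A_p$ is a non-empty finite subset of the group $(A,\circ)$ that is closed under the group operation, and such a subset is automatically a subgroup; hence $A_p\leq(A,\circ)$. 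Combining this with the $\lambda$-invariance established above shows that $A_p$ is a left ideal of $A$, which is the claim.

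The crux of the proof is exactly this multiplicative closure: the content of the statement is that the brace relation $a\circ b=a+\lambda_a(b)$ forces the additive Sylow subgroup to be closed under $\circ$, while finiteness is invoked (in place of producing an explicit $\circ$-inverse) to upgrade closure to the subgroup property.
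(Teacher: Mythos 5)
Your proof is correct and is essentially the standard argument: the paper itself gives no proof but cites Bachiller, and the cited argument runs exactly as you do — $A_p$ is the characteristic $p$-primary component of $(A,+)$, hence $\lambda_a$-invariant, and the identity $a\circ b=a+\lambda_a(b)$ then gives closure under $\circ$, with finiteness upgrading closure to the subgroup property. No gaps.
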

\medskip

In \cite{rump2007braces}, Rump introduced the special notion of the socle of a left brace that, in the terms of \cite[Section 4]{cedo2014braces}, is the following.
\begin{defin}
Let $A$ be a left brace. Then, the set 
\begin{align*}
    \Soc(A) := \{a\in A \ | \ \forall \,
     b\in A \quad  a + b = a\circ b \}
\end{align*}
is named \emph{socle} of $A$.
\end{defin}
\noindent Clearly,
$\Soc(A) := \{a\in A \ | \ \lambda_a = \id_A\}$.
Moreover, we have that $\Soc(A)$ is an ideal of $A$.
In \cite{rump2007braces}, Rump defined the \emph{socle series of $A$} by setting
$\Soc_0(A):= \{0\}$ and 
\begin{align*}
  \Soc_{n}(A):=  \{a \ | \  \forall \, b\in A \quad a + b - a\circ b\in \Soc_{n-1}(A) \},
\end{align*}
for every $n\in\mathbb{N}$.
Obviously, $\Soc_1(A)$ coincides with $\Soc(A)$.
\bigskip

As highlighted in \cite[Proposition 7]{rump2007braces} and \cite[Lemma 3]{cedo2014braces}, the socle of a left brace $A$ has a special linkage with the solution associated to $A$. 
\begin{prop}
Let $A$ be a left brace and $(A,r)$ the solution associated to $A$. If $(A/ \Soc(A), r)$ is the solution associated to the left brace $A/\Soc(A)$, then $(A/\Soc(A), r)$ coincides with the retraction $\Ret(A,r)$ of $(A,r)$.
\end{prop}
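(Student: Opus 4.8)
The plan is to realize both solutions as the image of a single canonical brace projection, and then to read off the coincidence from the definition of the retraction. Write $\pi\colon A\longrightarrow A/\Soc(A)$ for the natural map; since $\Soc(A)$ is an ideal, $\pi$ is a homomorphism of left braces. The starting observation is that $\Soc(A)=\{a\in A\mid \lambda_a=\id_A\}$ is exactly the kernel of the group homomorphism $\lambda\colon(A,\circ)\to\Aut(A,+)$ recalled in \cref{action}.

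First I would identify the two underlying sets. Because $\Soc(A)$ is an ideal, for each $a$ the additive coset $a+\Soc(A)$ and the multiplicative coset $a\circ\Soc(A)$ coincide (using $a\circ i=a+\lambda_a(i)$ together with the $\lambda$-invariance of $\Soc(A)$), so $\pi(a)=\pi(b)$ is equivalent to $a\circ b^{-1}\in\Soc(A)$, where $b^{-1}$ denotes the multiplicative inverse. Since $\lambda$ is a homomorphism, $a\circ b^{-1}\in\Soc(A)=\ker\lambda$ amounts to $\lambda_a\lambda_b^{-1}=\id_A$, that is, to $\lambda_a=\lambda_b$, which is precisely $a\sim_r b$. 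Hence the fibres of $\pi$ are exactly the retract classes, and the induced bijection identifies $A/\sim_r$ with the underlying set $A/\Soc(A)$.

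Next I would verify that $\pi$ intertwines the two solutions. Writing $r(a,b)=(\lambda_a(b),\lambda_{\lambda_a(b)}^{-1}(a))$ and letting $\lambda'$ denote the maps of the quotient brace, the first component is respected because $\lambda'_{\pi(a)}(\pi(b))=-\pi(a)+\pi(a)\circ\pi(b)=\pi(-a+a\circ b)=\pi(\lambda_a(b))$, i.e.\ $\pi\circ\lambda_a=\lambda'_{\pi(a)}\circ\pi$. Applying this identity to inverses yields $\pi\circ\lambda_a^{-1}=(\lambda'_{\pi(a)})^{-1}\circ\pi$; combining the two with $c:=\lambda_a(b)$ gives $\pi(\lambda_c^{-1}(a))=(\lambda'_{\pi(c)})^{-1}(\pi(a))$ together with $\pi(c)=\lambda'_{\pi(a)}(\pi(b))$, so the second component is respected as well. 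Thus $\pi$ is a surjective morphism of solutions from $(A,r)$ onto $(A/\Soc(A),r)$.

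Finally I would invoke the definition of retraction: a surjective morphism of solutions whose fibres are precisely the retract classes descends to an isomorphism between $\Ret(A,r)$ and its image. By the two previous paragraphs $\pi$ is exactly such a morphism onto $(A/\Soc(A),r)$, so the solution induced on $A/\sim_r$ coincides with the solution associated to the quotient brace, which is the claimed statement. I expect the only delicate point to be the bookkeeping in the second paragraph, namely keeping the additive and multiplicative cosets of $\Soc(A)$ consistent so that the fibre description $\pi(a)=\pi(b)\Leftrightarrow\lambda_a=\lambda_b$ is exact; once this is settled, the compatibility of the maps reduces to a direct computation from the brace axioms.
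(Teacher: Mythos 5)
Your proof is correct. Note that the paper itself does not prove this proposition at all --- it is recalled from the literature (Rump's Proposition 7 and Ced\'o--Jespers--Okni\'nski's Lemma 3) --- and your argument is essentially the standard one from those sources: identifying $\Soc(A)$ with $\ker\lambda$ so that the fibres of the canonical projection $\pi$ are exactly the retract classes, checking that $\pi$ intertwines the two solutions via $\pi\circ\lambda_a=\lambda'_{\pi(a)}\circ\pi$, and concluding from the definition of the retraction. The only point worth making explicit is that $\lambda_a(\Soc(A))=\Soc(A)$ (not merely $\subseteq$), which follows by applying the left-ideal condition to $a^-$ as well; with that, your coset bookkeeping is exact.
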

\smallskip

\begin{defin}
A left brace $A$ has \emph{finite multipermutation level} if the sequence $S_n$ defined as $S_1 := A$
and $S_{n + 1} = S_n/\Soc(S_n)$ for $n\geq 1$ reaches zero.
\end{defin}

\begin{prop}
Let $A$ be a left brace. Then, $A$ has finite multipermutation level if and only if $A$ admits an $s$-series, i.e., there exists a positive integer $n$ such that $A = \Soc_n(A)$.
\end{prop}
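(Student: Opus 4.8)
The plan is to identify the sequence $S_n$ with the quotients of $A$ by the successive terms of the socle series, so that the two finiteness conditions become literally the same statement. The crux is an inductive claim: for every $n$, the set $\Soc_n(A)$ is an ideal of $A$, one has $\Soc_{n-1}(A)\subseteq\Soc_n(A)$, and
$$\Soc\left(A/\Soc_{n-1}(A)\right)=\Soc_n(A)/\Soc_{n-1}(A).$$

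First I would establish this claim by induction on $n$. The base case is immediate, since $\Soc_0(A)=\{0\}$ is an ideal and $A/\{0\}=A$. For the inductive step, assume $\Soc_{n-1}(A)$ is an ideal, so that the quotient left brace $A/\Soc_{n-1}(A)$ is defined and the canonical projection $\pi\colon A\to A/\Soc_{n-1}(A)$ is a brace homomorphism. Writing $\bar a:=\pi(a)$, the element $\bar a$ lies in $\Soc(A/\Soc_{n-1}(A))$ exactly when $\bar a+\bar b=\bar a\circ\bar b$ for all $b\in A$, that is, when $a+b-a\circ b\in\Soc_{n-1}(A)$ for all $b\in A$; by the defining formula for the socle series this is precisely the condition $a\in\Soc_n(A)$. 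Hence $\pi^{-1}\!\left(\Soc(A/\Soc_{n-1}(A))\right)=\Soc_n(A)$, which yields the displayed identity and shows $\Soc_{n-1}(A)\subseteq\Soc_n(A)$. Since the socle of a left brace is an ideal and the preimage of an ideal under a brace homomorphism is again an ideal, $\Soc_n(A)$ is an ideal of $A$, closing the induction.

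Next I would show, again by induction, that $S_n\cong A/\Soc_{n-1}(A)$ as left braces. The base case $S_1=A=A/\Soc_0(A)$ holds by definition. For the inductive step, combining the hypothesis $S_n\cong A/\Soc_{n-1}(A)$ with the identity just proved gives
$$S_{n+1}=S_n/\Soc(S_n)\cong \left(A/\Soc_{n-1}(A)\right)\big/\left(\Soc_n(A)/\Soc_{n-1}(A)\right)\cong A/\Soc_n(A),$$
where the last isomorphism is the third isomorphism theorem for left braces applied to the pair of ideals $\Soc_{n-1}(A)\subseteq\Soc_n(A)$.

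The equivalence then drops out. By definition $A$ has finite multipermutation level if and only if $S_m=\{0\}$ for some positive integer $m$; by the previous step this holds precisely when $A/\Soc_{m-1}(A)=\{0\}$, i.e. when $A=\Soc_{m-1}(A)$, which is exactly the statement that $A$ admits an $s$-series. Since each passage is a genuine biconditional, both implications are covered at once. I expect the main obstacle to be the inductive verification that each $\Soc_n(A)$ is genuinely an ideal (and not merely a subgroup of the multiplicative group), as this is what legitimises forming the iterated quotients; once the identity $\Soc_n(A)/\Soc_{n-1}(A)=\Soc(A/\Soc_{n-1}(A))$ is secured, the rest is a routine application of the isomorphism theorem together with careful bookkeeping of the indices.
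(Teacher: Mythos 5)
The paper states this proposition purely as recalled background (it is a standard fact going back to Rump's work on the socle series) and gives no proof of its own, so there is no argument in the paper to compare against. Your proof is correct and is the standard one: the key identity $\Soc\bigl(A/\Soc_{n-1}(A)\bigr)=\Soc_n(A)/\Soc_{n-1}(A)$, obtained by unwinding the definition of the socle in the quotient and using surjectivity of $\pi$, simultaneously yields that each $\Soc_n(A)$ is an ideal containing $\Soc_{n-1}(A)$ and that $S_n\cong A/\Soc_{n-1}(A)$, after which the equivalence of the two finiteness conditions is immediate. The only point worth tightening is the final index step: if the sequence reaches zero already at $S_1$, then $A=\Soc_0(A)=\{0\}$ and $m-1=0$ is not a positive integer; here you should invoke the monotonicity $\Soc_{m-1}(A)\subseteq\Soc_m(A)$ that you have already established to replace $m-1$ by the positive integer $m$. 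This is exactly the bookkeeping you flagged, and it does not affect the validity of the argument.
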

In particular, if $n$ is the smallest positive integer such that $A = \Soc_n(A)$, then $A$ is said to be a \emph{left brace of multipermutation level $n$}.

\subsection{Left braces and uniconnected cycle sets with nilpotent permutation group}

In this subsection, we use some well-known results involving left braces to determine the structure of uniconnected cycle sets with nilpotent permutation group. As a consequence, 
we use this result and the ones contained in \cite{capiru2020}
to provide a method for constructing all the indecomposable cycle sets with cyclic permutation group. 
\medskip

Below we recall two results, the first due to Smocktunowitcz \cite[Theorem 19]{Sm18} and the second to Rump \cite{rump2020}, which are of crucial importance for our purposes.

\begin{lemma}[Theorem 19, \cite{Sm18}]\label{brace}
Let $A$ be a finite left brace such that $(A,\circ)$ is nilpotent and suppose that $|A|=p_1^{n_1}\cdots p_m^{n_m}$. Then, $A$ and $\prod\limits_{i}A_{p_i}$ are isomorphic as left braces.
\end{lemma}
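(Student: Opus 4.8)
The plan is to prove that a finite left brace $A$ with nilpotent multiplicative group $(A,\circ)$ decomposes as the direct product of its additive $p$-Sylow subgroups, each of which is a sub-brace. The key structural facts I would exploit are already in the excerpt: by \cref{rideal}, each additive $p$-Sylow subgroup $A_{p_i}$ is a left ideal of $A$, and to upgrade these left ideals to honest sub-braces whose product reconstructs $A$, I need to understand how the multiplicative and additive structures interact on a brace with nilpotent $(A,\circ)$.

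First I would recall the standard comparison between the additive and multiplicative orders of elements of a finite left brace. A useful known fact is that for any finite left brace, $(A,+)$ and $(A,\circ)$ have the same cardinality, hence the same set of prime divisors; the delicate point is that individual Sylow subgroups need not coincide in general. Here is where nilpotency of $(A,\circ)$ enters decisively: a finite nilpotent group is the internal direct product of its Sylow subgroups, so $(A,\circ)=\prod_i P_i$ where $P_i$ is the multiplicative $p_i$-Sylow subgroup, and each $P_i$ is a normal subgroup, i.e.\ an ideal of $A$ once we check $\lambda$-invariance. The heart of the argument is therefore to show that the multiplicative $p_i$-Sylow subgroup $P_i$ and the additive $p_i$-Sylow subgroup $A_{p_i}$ actually coincide as subsets of $A$.

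To establish $A_{p_i}=P_i$, I would argue that on each $A_{p_i}$ the two operations $+$ and $\circ$ generate groups of the same order (namely $|A_{p_i}|=p_i^{n_i}$), and that $A_{p_i}$ is closed under $\circ$. Closure under $\circ$ follows because $A_{p_i}$ is a left ideal, hence a subgroup of $(A,\circ)$; its multiplicative order is then a power of $p_i$ that divides $|A|$, while as an additive subgroup it has order exactly $p_i^{n_i}$, forcing $A_{p_i}$ to be precisely the $p_i$-component $P_i$ of the nilpotent group $(A,\circ)$. Once the identification $A_{p_i}=P_i$ is in hand, each $A_{p_i}$ is simultaneously the additive and multiplicative Sylow subgroup, hence a sub-brace, and it is an ideal of $A$ (normal in $(A,\circ)$ by nilpotency, $\lambda$-invariant by \cref{rideal}). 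The internal direct product decomposition of the nilpotent group $(A,\circ)$, together with the corresponding additive decomposition $(A,+)=\bigoplus_i A_{p_i}$, then yields the brace isomorphism $A\cong\prod_i A_{p_i}$, since the defining compatibility identity splits across the direct factors.

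The main obstacle I anticipate is precisely the coincidence of the additive and multiplicative Sylow subgroups: in a general finite left brace these can differ, so the whole weight of the hypothesis ``$(A,\circ)$ nilpotent'' must be used to force them to agree. The cleanest route is to show that $A_{p_i}$, being a left ideal, is a subgroup of $(A,\circ)$ whose order is a power of $p_i$, and to combine this with the fact that in a nilpotent group the unique Sylow $p_i$-subgroup is normal and contains every $p_i$-subgroup; a counting argument on $\prod_i |A_{p_i}|=|A|$ then pins down the identification. I would keep a careful eye on distinguishing the role of additive order versus multiplicative order of a single element, since conflating them is the natural trap here, and I expect the verification that the direct-product brace structure is compatible with both operations simultaneously to be routine once the Sylow subgroups are correctly identified.
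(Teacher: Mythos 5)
The paper offers no proof of this lemma: it is imported verbatim as Theorem~19 of Smoktunowicz's paper \cite{Sm18}, so there is nothing internal to compare against. Judged on its own, your argument is correct and is essentially the standard proof of this fact. The decisive chain is exactly as you say: by \cref{rideal} each additive Sylow subgroup $A_{p_i}$ is a left ideal, hence a subgroup of $(A,\circ)$ of cardinality $p_i^{n_i}$, hence a Sylow $p_i$-subgroup of $(A,\circ)$; nilpotency of $(A,\circ)$ makes that Sylow subgroup unique and normal, so $A_{p_i}$ coincides with it and is an ideal, and $(A,+)=\bigoplus_i A_{p_i}$, $(A,\circ)=\prod_i A_{p_i}$ simultaneously.

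The one place you wave your hands is the final step, ``the defining compatibility identity splits across the direct factors.'' What actually needs checking is that the addition map $\prod_i A_{p_i}\to A$, $(a_1,\dots,a_m)\mapsto a_1+\cdots+a_m$, is multiplicative, and this rests on the observation that for $a\in A_{p_i}$ and $b\in A_{p_j}$ with $i\neq j$ one has $\lambda_a(b)=b$: indeed $\lambda_a(b)-b$ lies in $A_{p_j}$ because $A_{p_j}$ is a left ideal, and it lies in $A_{p_i}$ because $A_{p_i}$ is an ideal (normality in $(A,\circ)$ is used here), so it lies in $A_{p_i}\cap A_{p_j}=\{0\}$. From $\lambda_a(b)=b$ one gets $a\circ b=a+b$ across distinct factors and the multiplicativity follows by a short computation with the brace axiom. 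This is genuinely routine, but it is the only step that uses the full ideal property rather than just the left-ideal property, so it deserves the explicit sentence.
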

 
Before introducing Rump's Theorem on uniconnected cycle sets, recall that if $A$ is a left brace, by \cref{action}, the maps $\lambda_a$ in \eqref{eq:gamma} 
determines an action of $(A,\circ)$ on $(A,+)$. According to 
\cite{rump2007braces, rump2020}, a subset $X$ of $A$ which is a union of orbits with respect to such an action
and generating the additive group $(A,+)$ is called \textit{cycle base}. Moreover, if a cycle base is a single orbit then is said to be a \textit{transitive cycle base}. 
\begin{lemma}[Theorem 2, \cite{rump2020}]\label{rump20}
Let A be a left brace, $X$ a transitive cycle base and $g\in X$. Define on $A$ the binary operation $\bullet$ 
$$
a\bullet b:=(\lambda_a(g))^{-}\circ b,
$$
for all $a,b\in A$. Then, $(A,\bullet)$ is a uniconnected cycle set and $\mathcal{G}(A)\cong (A,\circ)$. Conversely, every uniconnected cycle set  can be constructed in this way.
\end{lemma}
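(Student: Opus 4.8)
The plan is to prove both directions of the correspondence in \cref{rump20}, turning a transitive cycle base into a uniconnected cycle set and conversely. First I would verify the forward direction: given a left brace $A$, a transitive cycle base $X$, and a fixed $g\in X$, I would show that $(A,\bullet)$ with $a\bullet b := (\lambda_a(g))^{-}\circ b$ is a non-degenerate cycle set whose permutation group acts regularly. The left multiplication by $a$ is the map $b\mapsto (\lambda_a(g))^{-}\circ b$, which is simply left-$\circ$-translation by the fixed element $(\lambda_a(g))^{-}$; since $(A,\circ)$ is a group, this is a bijection, so each $\sigma_a$ is invertible. To check the cycle set identity \eqref{cicloide}, I would expand both sides using $\lambda$ being a group homomorphism $(A,\circ)\to\Aut(A,+)$ (part 2 of \cref{action}) together with the defining brace relation $a\circ(b+c)+a = a\circ b + a\circ c$, which encodes how $\circ$ and $+$ interact; the key computational identity is that $\lambda_{a\circ b} = \lambda_a\lambda_b$ and that $\lambda_a(g)$ ranges over the orbit of $g$.

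Next I would identify the permutation group. The map $\sigma_a$ equals left-translation by $(\lambda_a(g))^{-}$, so $\mathcal{G}(A)$ is generated by $\{L_{(\lambda_a(g))^{-}} : a\in A\}$ inside $\Sym(A)$. Because $X$ is a transitive cycle base, the set $\{\lambda_a(g) : a\in A\}$ is exactly the orbit $X$, and since $X$ generates $(A,+)$ and the $\lambda_a$ realize the $\circ$-action, these translations should generate all of left-$\circ$-translations by elements of $A$, yielding $\mathcal{G}(A)\cong(A,\circ)$ acting regularly. Non-degeneracy (bijectivity of the squaring map $a\mapsto a\bullet a = (\lambda_a(g))^{-}\circ a$) would then follow from the regularity of the action and finiteness, or directly from the brace structure.

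For the converse, I would start from an arbitrary uniconnected cycle set $(A,\bullet)$ and reconstruct the brace. The regular action of $\mathcal{G}(A)$ lets me transport the group structure of $\mathcal{G}(A)$ onto $A$ via a base point, defining $(A,\circ)\cong\mathcal{G}(A)$; Rump's original theory (as referenced through \cite{rump2005decomposition,rump2007braces}) equips $A$ with a compatible abelian additive structure making it a left brace. I would then exhibit a transitive cycle base $X$ and an element $g$ recovering the given operation, essentially by reversing the formula $a\bullet b = (\lambda_a(g))^{-}\circ b$ to read off $\lambda_a(g)$ from the left multiplications $\sigma_a$.

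The hard part will be the converse direction, specifically producing the additive structure and verifying that the reconstructed data $(A,+,\circ,X,g)$ genuinely returns the original cycle set rather than an isomorphic one with a shifted base point. The choice of $g$ and the identification of the orbit as a transitive cycle base require care: I must check that $X = \{\lambda_a(g) : a\in A\}$ both generates $(A,+)$ and is a single orbit, and that changing $g$ within the orbit yields an isomorphic cycle set. I expect the forward direction to be a bounded computation using \cref{action} and the brace axiom, while the converse leans on the established equivalence between uniconnected cycle sets and brace-theoretic data, with the main subtlety being the well-definedness and naturality of the base-point choice.
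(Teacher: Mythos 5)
The paper does not prove this statement at all: it is imported verbatim as Theorem~2 of \cite{rump2020} and used as a black box, so there is no in-paper argument to compare yours against. Judged on its own terms, your forward direction is essentially the standard argument and is sound. The one computation you should make explicit is why the cycle set identity holds: expanding $(a\bullet b)\bullet(a\bullet c)$ reduces it to showing $\lambda_a(g)\circ\lambda_{\lambda_a(g)}^{-1}\bigl(\lambda_b(g)\bigr)=\lambda_b(g)\circ\lambda_{\lambda_b(g)}^{-1}\bigl(\lambda_a(g)\bigr)$, which follows from the identity $u\circ\lambda_u^{-1}(v)=u+v$ (immediate from $\lambda_u(w)=-u+u\circ w$) and commutativity of $+$; your appeal to ``the defining brace relation'' is pointing at the right tool but does not name this step.

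Two places are genuinely thin. First, to get $\mathcal{G}(A)\cong(A,\circ)$ you need that the left translations by elements of $X^{-}$ generate the \emph{whole} left-translation group, i.e.\ that a transitive cycle base generates $(A,\circ)$ multiplicatively and not merely $(A,+)$ additively; your ``should generate'' elides this, and it is not automatic from additive generation (it is the content of \cite[Proposition 8]{rump2020one}, which this paper itself invokes later). Second, the converse is stated only as ``Rump's original theory equips $A$ with a compatible abelian additive structure'' — but producing that additive structure on $\mathcal{G}(A)$ and verifying that the resulting $(A,+,\circ,X,g)$ returns the original operation $\bullet$ (rather than something merely isomorphic) is precisely the nontrivial half of Rump's theorem, so as written the converse is a restatement of the goal rather than a proof of it. Both gaps are fillable from the cited literature, but they are the parts that carry the weight.
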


If $A$ is a left brace and $(A,\bullet)$ is the uniconnected cycle set constructed as in the previous lemma, from now on we refer to $(A,\bullet)$ as the cycle set \textit{associated} to the left brace $A$. Now, we provide the main result of this section, namely we show that the classification of finite indecomposable cycle sets with a nilpotent regular permutation group reduces to the classification of the ones having prime-power size.

\begin{theor}\label{princ}
Let $m,n_1,...,n_m$ be natural numbers, $p_1,...,p_m$ distinct prime numbers and $(X,\cdot)$ an uniconnected cycle set of cardinality $p_1^{n_1}\cdots p_m^{n_m}$ and with nilpotent permutation group $\mathcal{G}(X)$. Then, there exist $m$ indecomposable cycle sets $(Y_1,\cdot_1),\ldots,(Y_m,\cdot_m)$ such that $|Y_i|=p_i^{n_i}$ and 
$$(X,\cdot)\cong (Y_1,\cdot_1)\times\cdots\times (Y_m,\cdot_m).$$
\end{theor}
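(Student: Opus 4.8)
The plan is to leverage the bridge between uniconnected cycle sets and left braces provided by \cref{rump20}, convert the decomposition problem into a statement about left braces, apply Smocktunowicz's structure theorem (\cref{brace}), and then transport the resulting brace decomposition back into the language of cycle sets. The key observation is that \cref{rump20} tells us that every uniconnected cycle set $(X,\cdot)$ arises from a left brace $A$ together with a transitive cycle base $X$, in such a way that $\mathcal{G}(X)\cong(A,\circ)$. Since we are given that $\mathcal{G}(X)$ is nilpotent of order $p_1^{n_1}\cdots p_m^{n_m}$, the multiplicative group $(A,\circ)$ is a finite nilpotent group of that same order.

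\medskip

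\textbf{Step 1 (from cycle set to brace).} First I would invoke \cref{rump20} to realize $(X,\cdot)$ as the cycle set associated to a left brace $A$ whose multiplicative group $(A,\circ)$ is isomorphic to $\mathcal{G}(X)$, hence nilpotent of order $p_1^{n_1}\cdots p_m^{n_m}$. The underlying set $X$ is a single orbit (a transitive cycle base) generating $(A,+)$, and a chosen base point $g\in X$ determines the operation $\bullet=\cdot$.

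\medskip

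\textbf{Step 2 (decompose the brace).} Next I would apply \cref{brace} to obtain a left-brace isomorphism
$$
A\;\cong\;\prod_{i=1}^{m}A_{p_i},
$$
where each $A_{p_i}$ is the $p_i$-Sylow subgroup of $(A,+)$, which by \cref{rideal} is a left ideal and in fact an ideal of $A$. Under this isomorphism the additive group splits as $(A,+)\cong\bigoplus_i(A_{p_i},+)$ and the multiplicative group splits correspondingly, so the $\lambda$-action respects the direct factors.

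\medskip

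\textbf{Step 3 (transport the cycle base and base point).} The main work — and what I expect to be the principal obstacle — is to show that the transitive cycle base $X$ and the operation $\bullet$ decompose compatibly with this product. Concretely, write the chosen element $g$ as $g=(g_1,\dots,g_m)$ with $g_i\in A_{p_i}$. Because the orbit $X$ generates $(A,+)$ and the action is coordinatewise, the projection of $X$ onto each factor $A_{p_i}$ is a single orbit $X_i$ under the $(A_{p_i},\circ)$-action, hence a transitive cycle base of $A_{p_i}$ containing $g_i$, and one checks that $X\cong X_1\times\cdots\times X_m$ as a set of size $\prod_i|X_i|=\prod_i p_i^{n_i}$. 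Setting $(Y_i,\cdot_i)$ to be the uniconnected cycle set associated to $A_{p_i}$ via \cref{rump20} with base point $g_i$, the definition $a\bullet b=(\lambda_a(g))^{-}\circ b$ factors through the product because $\lambda$, inversion, and $\circ$ are all coordinatewise; this yields the desired isomorphism
$$
(X,\cdot)\cong(Y_1,\cdot_1)\times\cdots\times(Y_m,\cdot_m),
$$
with $|Y_i|=p_i^{n_i}$. The delicate points to verify are that each $X_i$ genuinely remains a \emph{single} orbit (transitivity is not automatically inherited by projections in general, but here it follows because the nilpotent multiplicative group splits as a direct product of its Sylow subgroups acting independently on the corresponding additive factors) and that each factor $(Y_i,\cdot_i)$ is indecomposable, i.e.\ $\mathcal{G}(Y_i)\cong(A_{p_i},\circ)$ acts transitively on $Y_i$ — which is exactly the content of \cref{rump20} applied to each $A_{p_i}$.
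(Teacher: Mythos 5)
Your proposal follows exactly the paper's route: realize $(X,\cdot)$ via \cref{rump20} as the cycle set associated to a left brace $A$ with $(A,\circ)\cong\mathcal{G}(X)$ nilpotent, split $A\cong\prod_i A_{p_i}$ by \cref{brace}, and observe that $\bullet$ decomposes coordinatewise; your Step 3 is precisely the ``routine computation'' the paper leaves implicit, and it is correct. One small imprecision: in \cref{rump20} the underlying set of the associated cycle set is the whole brace $A$, not the transitive cycle base $X$, so the set-level identification you need is simply $A=\prod_i A_{p_i}$ (of the right cardinalities), while the projected cycle bases $X_i$ need only be transitive cycle bases of the factors containing the $g_i$ --- your claim $|X_i|=p_i^{n_i}$ is neither needed nor true in general (e.g.\ for a trivial brace the cycle base is a single generator).
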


\begin{proof}
By \cref{rump20}, there exists a left brace $(A,+,\circ)$ such that the associated cycle set $(A,\bullet)$ is isomorphic to $(X,\cdot)$ and $\mathcal{G}(X)\cong (A,\circ)$.\\
Now, since $(A,\circ)$ is nilpotent, by \cref{brace} we have that $(A,+,\circ)\cong \prod\limits_{i} (A_{p_i},+,\circ)$, where $A_{p_i}$ is the $p_i$-Sylow of $(A,\circ)$. Hence, by a routine computation, it follows that $(A,\bullet)$ is isomorphic to the direct product $\prod\limits_{i} (A_{p_i},\bullet)$, where $(A_{p_i},\bullet)$ is the uniconnected cycle set associated to $(A_{p_i},+,\circ)$. Setting $(Y_i,\cdot_i):=(A_{p_i},\bullet)$, for every $i\in \{1,\ldots,m\}$, the claim follows.
\end{proof}

As a first consequence of the previous theorem, we obtain an alternative proof of \cite[Proposition 4]{Ru20}.

\begin{cor}[Proposition 4, \cite{Ru20}]
Let $(X,\cdot)$ be an indecomposable cycle sets having cyclic permutation group $\mathcal{G}(X)$. Then, $<\sigma_x>=\mathcal{G}(X)$, for every $x\in X$.
\end{cor}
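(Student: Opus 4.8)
The plan is to reduce the statement to the prime-power case handled by \cref{costruz2} via the decomposition theorem \cref{princ}, and then read the conclusion off from the explicit shape of the left multiplications. First I would observe that $X$ is automatically uniconnected: $\mathcal{G}(X)$ is a transitive (as $X$ is indecomposable), faithful, and abelian (as it is cyclic) subgroup of $\Sym(X)$, and for such an action all point stabilizers are conjugate, hence equal, and their common value fixes every point and so lies in the trivial kernel. Thus $\mathcal{G}(X)$ acts freely and transitively, so $|X|=|\mathcal{G}(X)|=:N=p_1^{n_1}\cdots p_m^{n_m}$. Since a cyclic group is nilpotent, \cref{princ} yields an isomorphism $X\cong Y_1\times\cdots\times Y_m$ with each $(Y_i,\cdot_i)$ an indecomposable cycle set of size $p_i^{n_i}$ and, from its proof, $\mathcal{G}(X)\cong\prod_i\mathcal{G}(Y_i)$ with $\mathcal{G}(Y_i)$ the $p_i$-part. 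As a direct product of groups of pairwise coprime orders is cyclic precisely when every factor is, each $\mathcal{G}(Y_i)$ is cyclic of order $p_i^{n_i}$.

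Next I would settle the prime-power factors. Fix $i$ and put $Y=Y_i$, $p=p_i$, $k=n_i$. If the multipermutation level of $Y$ equals $1$, then all the $\sigma_y$ coincide and $\langle\sigma_y\rangle=\mathcal{G}(Y)$ trivially. Otherwise the level is at least $2$, and by the converse part of \cref{costruz2} the cycle set $Y$ is isomorphic to one of the constructed cycle sets, in which $\mathcal{G}(Y)=\langle\psi\rangle$ for the $p^{k}$-cycle $\psi$ and each left multiplication has the form $\sigma_y=\psi^{\,1+p^{j_{n-1}}f_{n-1}(y)+\cdots+p^{j_1}f_1(y)}$. Because $j_1,\dots,j_{n-1}\geq 1$, the exponent is congruent to $1$ modulo $p$, hence coprime to $p^{k}$, so $\sigma_y$ generates $\langle\psi\rangle=\mathcal{G}(Y)$. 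In either case $\langle\sigma_y\rangle=\mathcal{G}(Y)$ for every $y\in Y$.

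Finally I would recombine. Under the identification $X\cong Y_1\times\cdots\times Y_m$ the left multiplication by $x=(x_1,\dots,x_m)$ acts componentwise, $\sigma_x=\sigma_{x_1}\times\cdots\times\sigma_{x_m}$, so its order equals $\operatorname{lcm}_i|\sigma_{x_i}|=\operatorname{lcm}_i p_i^{n_i}=N$ by the previous step. Since $\mathcal{G}(X)$ is cyclic of order $N$ and contains the element $\sigma_x$ of order $N$, it follows that $\langle\sigma_x\rangle=\mathcal{G}(X)$, as wanted.

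The step I expect to require the most care is the bookkeeping around \cref{princ}: checking that each factor $Y_i$ really is an indecomposable cycle set whose permutation group is cyclic of prime-power order (so that \cref{costruz2} genuinely applies), and that $\mathcal{G}(X)$ splits as $\prod_i\mathcal{G}(Y_i)$ with coprime orders, which is exactly what makes the componentwise orders multiply back up to $N$. By contrast, the reduction to regularity and the congruence computation on the exponent are routine.
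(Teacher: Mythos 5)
Your argument is correct and follows the same skeleton as the paper's proof: reduce to the prime-power case via \cref{princ}, then settle that case using results from \cite{capiru2020}. The difference is in the base case. The paper's proof is a one-line citation of Lemmas 4--5 of \cite{capiru2020}, which handle prime-power size directly, whereas you derive the prime-power case from the classification \cref{costruz2}, observing that each left multiplication is $\psi$ raised to the exponent $1+p^{j_{n-1}}f_{n-1}(y)+\cdots+p^{j_1}f_1(y)\equiv 1\pmod{p}$ (since $j_1,\dots,j_{n-1}\geq 1$), which is coprime to $p^{k}$ and so generates $\langle\psi\rangle$. This buys you a proof that is self-contained relative to the statements actually reproduced in this paper, at the cost of routing through the heavier classification theorem; it also requires you to treat the multipermutation level $1$ case separately, which you do correctly. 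You additionally fill in details the paper leaves implicit: that a transitive abelian permutation group is regular (so $X$ is uniconnected and \cref{princ} applies), that $\mathcal{G}(X)\cong\prod_i\mathcal{G}(Y_i)$ with pairwise coprime factors, and the lcm computation recombining the componentwise orders into a generator of the cyclic group $\mathcal{G}(X)$. All of these steps check out; note only that, exactly as in the paper's own proof, finiteness of $X$ is tacitly assumed, since both \cref{princ} and \cref{costruz2} are statements about finite cycle sets.
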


\begin{proof}
If $X$ has prime-power size, the claim follows by \cite[Lemma 4-5]{capiru2020}, otherwise, the claim follows by \cite[Lemma 4-5]{capiru2020}  and \cref{princ}.
\end{proof}

In the following corollary, we specialize our result to indecomposable cycle sets $X$ with cyclic permutation group $\mathcal{G}(X)$. 
Observe that, by this hypothesis together with that of transitivity on $X$, we have that $\mathcal{G}(X)$ is a nilpotent group that acts regularly on $X$, i.e.,  $X$ is uniconnected.

\begin{cor}
Let $m,n_1,...,n_m$ be natural numbers and $p_1,...,p_m$ distinct prime numbers. Suppose that $(X,\cdot)$ is an indecomposable cycle set of cardinality $p_1^{n_1}\cdots p_m^{n_m}$ and with cyclic permutation group $\mathcal{G}(X)$. Then, there exist $m$ indecomposable cycle sets $(Y_1,\cdot_1),\ldots,(Y_m,\cdot_m)$ such that $|Y_i|=p_i^{n_i}$ and 
$$(X,\cdot)\cong (Y_1,\cdot_1)\times\cdots\times (Y_m,\cdot_m).$$
Moreover, $(Y_i,\cdot_i)$ is a cycle set of multipermutation level $1$ or a cycle set constructed as in \cref{costruz2}.
\end{cor}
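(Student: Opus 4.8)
The plan is to reduce to the prime-power case already settled by \cref{princ} and then invoke the classification of \cref{costruz2}. First I would note that the hypotheses already force $X$ to be uniconnected with nilpotent permutation group, exactly as observed just before the statement: since $\mathcal{G}(X)$ is cyclic it is abelian, hence nilpotent, and an abelian permutation group acting transitively acts regularly (all point stabilizers are conjugate, hence equal in the abelian case, and therefore coincide with the kernel, which is trivial by faithfulness). Consequently \cref{princ} applies verbatim and produces indecomposable cycle sets $(Y_1,\cdot_1),\dots,(Y_m,\cdot_m)$ with $|Y_i|=p_i^{n_i}$ and $(X,\cdot)\cong (Y_1,\cdot_1)\times\cdots\times(Y_m,\cdot_m)$, which is the first assertion.

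For the ``moreover'' part I would show that each factor $Y_i$ again has cyclic permutation group. Reusing the proof of \cref{princ}, write $\mathcal{G}(X)\cong (A,\circ)$ for the left brace $A$ furnished by \cref{rump20}, and recall that $(A,+,\circ)\cong\prod_i (A_{p_i},+,\circ)$ with $Y_i=(A_{p_i},\bullet)$. Applying \cref{rump20} to the brace $A_{p_i}$ gives $\mathcal{G}(Y_i)\cong (A_{p_i},\circ)$, which is precisely the $p_i$-Sylow subgroup of $(A,\circ)\cong\mathcal{G}(X)$. A Sylow subgroup of a cyclic group is cyclic, so $\mathcal{G}(Y_i)$ is cyclic of order $p_i^{n_i}$.

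Having reduced each $Y_i$ to an indecomposable cycle set of prime-power size with cyclic permutation group, I would conclude by a dichotomy on its multipermutation level: if $Y_i$ has multipermutation level $1$ there is nothing further to prove, and otherwise its level exceeds $1$, so the converse statement of \cref{costruz2} guarantees that $Y_i$ arises from that construction.

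The step I expect to be most delicate is the middle one: making precise that the permutation group of the direct product decomposes compatibly with the factors, so that $\mathcal{G}(Y_i)$ is genuinely identified with the $p_i$-Sylow of $\mathcal{G}(X)$. This is exactly where the brace-theoretic translation carries the weight, since the isomorphism $A\cong\prod_i A_{p_i}$ of \cref{brace} together with the identification $\mathcal{G}(A_{p_i},\bullet)\cong(A_{p_i},\circ)$ from \cref{rump20} replaces a potentially awkward direct computation with the structural fact that Sylow subgroups of cyclic groups are cyclic.
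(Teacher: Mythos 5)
Your proposal is correct and follows essentially the same route as the paper: observe that a cyclic (hence abelian) transitive permutation group acts regularly so that $X$ is uniconnected, apply \cref{princ} to get the decomposition, and then note that each $\mathcal{G}(Y_i)$ is cyclic so that \cref{costruz2} (or the level-$1$ case) applies. Your middle step identifying $\mathcal{G}(Y_i)$ with the $p_i$-Sylow subgroup of $\mathcal{G}(X)$ via the brace decomposition is a careful justification of a claim the paper asserts in one line (``Since $\mathcal{G}(X)$ is cyclic so $\mathcal{G}(Y_i)$ is''), but it is the same argument in substance.
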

\begin{proof}
Since $X$ is uniconnected, by the previous theorem, there exist $m$ indecomposable cycle sets $(Y_1,\cdot_1),\ldots,(Y_m,\cdot_m)$ such that $|Y_i|=p_i^{n_i}$ and 
$$(X,\cdot)\cong (Y_1,\cdot_1)\times\cdots\times (Y_m,\cdot_m).$$
Since $\mathcal{G}(X)$ is cyclic so $\mathcal{G}(Y_i)$ is, hence $(Y_i,\cdot_i)$ is a cycle set constructed as in \cref{costruz2}.
\end{proof}
\medskip

We highlight that to classify concretely finite indecomposable cycle sets with nilpotent regular permutation group,  by \cref{princ}, partial results have been obtained by \cref{costruz2}. 
However, these results do not consider all cycle sets belonging to this family, as one can see in the following example.

\begin{ex}
Let $X:=\{1,2,3,4,5,6,7,8 \}$ be the cycle set given by
$$\sigma_1=\sigma_6:=(1\;4)(2\;8)(3\;7)(5\;6)\qquad \sigma_2=\sigma_7:=(1\;7\;6\;2)(3\;4\;8\;5) $$
$$\sigma_3=\sigma_8:= (1\;5)(2\;3)(4\;6)(7\;8)  \qquad \sigma_4=\sigma_5:=(1\;2\;6\;7)(3\;5\;8\;4).$$
Then, $\mathcal{G}(X)$ is isomorphic to the dihedral group of order $8$ and acts transitively on $X$.
\end{ex}



We conclude this section by showing that the classification of left braces has a great impact on indecomposable cycle sets (and hence on indecomposable solutions) also in the infinite case. To this end, we recall the following result due to Ced\'o, Smocktunowitcz, and Vendramin.

\begin{lemma}[\cite{CeSmVe19}, Theorem 5.5]
Let A be a left brace such that $(A,\circ)\cong (\mathbb{Z},+)$. 
Then, $a + b = a\circ b$, for all $a,b\in A$, i.e., $A$ is a trivial left brace.
\end{lemma}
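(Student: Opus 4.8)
The plan is to analyze the left brace $A$ with $(A,\circ) \cong (\mathbb{Z},+)$ by exploiting the group homomorphism $\lambda \colon (A,\circ) \to \Aut(A,+)$ provided by \cref{action}. Since $(A,\circ)$ is infinite cyclic, fix a generator $g$ of $(A,\circ)$, so every element of $A$ has the form $g^{\circ n}$ for a unique $n \in \mathbb{Z}$. Because $\lambda$ is a homomorphism, the whole image $\lambda(A)$ is the cyclic subgroup of $\Aut(A,+)$ generated by the single automorphism $\lambda_g$. The goal $a + b = a \circ b$ for all $a,b$ is equivalent, by the definition \eqref{eq:gamma}, to $\lambda_a = \id_A$ for all $a$, i.e.\ to $A = \Soc(A)$, so the entire problem reduces to showing $\lambda_g = \id_A$.

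First I would understand the additive structure. The key observation is that $\lambda_g \in \Aut(A,+)$ must send the multiplicative generator $g$ to another element, and one can compute $\lambda_g(g) = -g + g\circ g = -g + g^{\circ 2}$. More usefully, I would try to pin down $(A,+)$ itself: the element $g$ together with the action of $\lambda_g$ generates $A$ additively, since every $g^{\circ n}$ can be rewritten using $g^{\circ(n+1)} = g \circ g^{\circ n} = g + \lambda_g(g^{\circ n})$, an inductive relation expressing each multiplicative power as an additive combination built from $g$ and iterates of $\lambda_g$. This shows $(A,+)$ is generated as an abelian group by the orbit $\{\lambda_g^k(g) : k \in \mathbb{Z}\}$.

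The main obstacle, and the heart of the argument, is ruling out nontrivial $\lambda_g$. Here I would use the constraint that $\lambda_g$ has infinite multiplicative order matched against the additive automorphism structure: since $\lambda$ is injective on $(A,\circ) \cong \mathbb{Z}$ precisely when $\Soc(A)$ is trivial, either $\Soc(A) = A$ (the desired conclusion) or $\lambda_g$ generates an infinite cyclic subgroup of $\Aut(A,+)$. In the latter case I would derive a contradiction from the recursion above: the relation $g^{\circ(n+1)} - g^{\circ n} = \lambda_g(g^{\circ n}) + (g - g^{\circ n})$ together with the fact that $\{g^{\circ n}\}$ must exhaust $A$ (as $(A,\circ)$ is cyclic) forces strong finiteness or torsion conditions on the orbit, which an infinite-order automorphism cannot satisfy while keeping $A$ a single $\circ$-cycle. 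Concretely, I expect that analyzing how $\lambda_g$ acts on the finitely-built additive generators will show the orbit of $g$ under $\lambda_g$ cannot both generate $(A,+)$ and be consistent with $(A,\circ)$ being exactly $\mathbb{Z}$ unless $\lambda_g$ fixes $g$, whence $\lambda_g = \id_A$.

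Once $\lambda_g = \id_A$ is established, the conclusion is immediate: $\lambda_{g^{\circ n}} = \lambda_g^{\,n} = \id_A$ for all $n$ by the homomorphism property, so $\Soc(A) = A$ and $a \circ b = a + b$ for all $a,b \in A$, i.e.\ $A$ is trivial. I anticipate the delicate point will be making the orbit-versus-cyclicity tension fully rigorous; an alternative route worth keeping in reserve is to transfer the problem to the associated solution and invoke that a trivial multiplicative group forces the $\lambda$-action to collapse, but the direct homomorphism argument via $\lambda_g$ seems cleanest.
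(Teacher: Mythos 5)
First, a point of comparison: the paper does not prove this lemma at all --- it is imported verbatim as \cite[Theorem 5.5]{CeSmVe19} and used as a black box --- so there is no in-paper argument to measure yours against; your proposal has to stand on its own, and as it stands it has a genuine gap at its central step. The reduction is fine (write $(A,\circ)=\langle g\rangle$ and aim for $\lambda_g=\id_A$), and the closing step is fine (if $\lambda_g(g)=g$ then $g^{\circ n}=ng$, so $(A,+)$ is the additive subgroup generated by $g$ and $\lambda_g=\id_A$; this is essentially \cref{carbra}). But the heart of the matter --- ruling out $\lambda_g\neq\id_A$ --- is never actually argued. The passage claiming that the recursion $g^{\circ(n+1)}=g+\lambda_g(g^{\circ n})$ ``forces strong finiteness or torsion conditions \dots which an infinite-order automorphism cannot satisfy'' names no concrete contradiction, and you flag yourself that making it rigorous is the delicate point. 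That a real idea is missing is shown by the finite analogue: the brace on $\mathbb{Z}/4\mathbb{Z}$ with $a\circ b=a+b+2ab$ (which appears in one of the paper's own examples) has cyclic multiplicative group generated by $1$, its additive group is generated by the single $\lambda$-orbit of $1$, it satisfies the same recursion, and yet $\lambda_1\neq\id$. So no argument based only on ``the orbit of $g$ generates $(A,+)$ while $\{g^{\circ n}\}$ exhausts $A$'' can work; one must use the infinitude and torsion-freeness of $(A,\circ)\cong\mathbb{Z}$ quantitatively, e.g.\ by first proving $(A,+)\cong\mathbb{Z}$, whence $\Aut(A,+)=\{\pm\id\}$ and $\lambda_g=-\id$ is excluded because it would give $g^{\circ 2}=g+\lambda_g(g)=0$, contradicting that $g$ has infinite order in $(A,\circ)$. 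That intermediate claim $(A,+)\cong\mathbb{Z}$ is precisely what your sketch does not supply.

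A second, smaller defect: your case split is a false dichotomy. Since $\Soc(A)=\ker\lambda$ is a subgroup of $(A,\circ)\cong(\mathbb{Z},+)$, it is either all of $A$, or trivial, or equal to $\langle g^{\circ n}\rangle$ for some $n\geq 2$; in the last case $\lambda_g$ has finite order $n>1$, and your ``latter case,'' which targets an infinite-order $\lambda_g$, does not cover it. Any correct proof must dispose of the finite-order nontrivial case as well.
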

\smallskip

As a consequence of the previous lemma we provide the following significant theorem. One can note that it is essentially contained in the recent preprint by Jedli{\v{c}}ka, Pilitowska, and Zamojska-Dzienio \cite[Proposition 6.2]{JePiZa20x}, however here we give an alternative proof of their result.
\begin{theor}
Let $(X,\cdot)$ be an infinite indecomposable cycle set such that $\mathcal{G}(X)\cong (\mathbb{Z},+)$. Then, up to isomorphism, $(X,\cdot)$ is the cycle set on $\mathbb{Z}$ given by $a\cdot b:=b+1$, for all $a,b\in \mathbb{Z}$.
\end{theor}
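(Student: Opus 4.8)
The plan is to apply the preceding lemma of Ced\'o, Smocktunowitcz, and Vendramin by realizing $(X,\cdot)$ as the uniconnected cycle set associated to a suitable left brace. First I would observe that since $\mathcal{G}(X)\cong(\mathbb{Z},+)$ acts transitively on the countable set $X$, and $(\mathbb{Z},+)$ is abelian, the action is automatically regular: a transitive abelian group action is free, so $X$ is uniconnected. This is the crucial structural reduction, because it brings \cref{rump20} into play.

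Next I would invoke \cref{rump20} to produce a left brace $(A,+,\circ)$ together with a transitive cycle base $X$ and an element $g\in X$ such that the associated cycle set $(A,\bullet)$ is isomorphic to $(X,\cdot)$ and $\mathcal{G}(X)\cong(A,\circ)$. Combining this isomorphism with the hypothesis $\mathcal{G}(X)\cong(\mathbb{Z},+)$ yields $(A,\circ)\cong(\mathbb{Z},+)$. The preceding lemma then forces $A$ to be a trivial left brace, meaning $a+b=a\circ b$ for all $a,b\in A$; equivalently $\lambda_a=\id_A$ for every $a\in A$, by the definition $\lambda_a(b)=-a+a\circ b$ in \eqref{eq:gamma}.

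Now I would substitute $\lambda_a=\id$ into the formula for $\bullet$ from \cref{rump20}. Since the cycle base $X$ is a single orbit of the $\lambda$-action and that action is trivial, the orbit of $g$ is $\{g\}$, so $X=\{g\}$ as a cycle base inside $A$; but $X$ must generate $(A,+)$, which with $(A,\circ)\cong\mathbb{Z}$ means $A=\langle g\rangle$ additively. Writing the additive group as $\mathbb{Z}$ with $g$ a generator, the cycle set operation becomes
$$
a\bullet b=(\lambda_a(g))^{-}\circ b = g^{-}\circ b = -g + b,
$$
using triviality $\circ=+$ and that $g^{-}$ is the additive inverse $-g$. Identifying $\mathbb{Z}$ with itself by the translation sending $-g$ to $1$ (equivalently choosing $g=-1$ as generator, or conjugating by the affine bijection $t\mapsto t$ shifted so the constant becomes $1$), this is exactly the operation $a\cdot b=b+1$ on $\mathbb{Z}$. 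Thus $(X,\cdot)$ is isomorphic to the claimed cycle set.

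The step I expect to be the main obstacle is the last one: carefully matching the abstract operation $a\bullet b=(\lambda_a(g))^{-}\circ b$ against the explicit target $b+1$, and in particular justifying the change of generator that turns the constant $-g$ into $+1$. One must check that the map realizing this normalization is genuinely a cycle set isomorphism and that no choice of $g$ or of additive generator produces a non-isomorphic operation; the non-degeneracy and the generation-of-$(A,+)$ condition built into the notion of transitive cycle base are what guarantee that $g$ is an additive generator, so that the identification with $\mathbb{Z}$ and the normalization of the shift constant are forced rather than arbitrary.
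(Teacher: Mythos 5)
Your proposal is correct and follows essentially the same route as the paper: reduce to the uniconnected case, apply \cref{rump20} to realize $(X,\cdot)$ via a left brace with $(A,\circ)\cong(\mathbb{Z},+)$, invoke the preceding lemma to force $A$ to be the trivial brace (so $\lambda_a=\id_A$ and the only transitive cycle bases are $\{1\}$ and $\{-1\}$), and read off $a\bullet b=-g+b$. The only slip is in the final normalization: the isomorphism identifying the operation $b-1$ (case $g=1$) with $b+1$ is the negation $t\mapsto -t$, not a translation, but this does not affect the argument.
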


\begin{proof}
By \cref{rump20}, there exists a left brace $A$ such that the associated  cycle set $(A,\bullet)$ is isomorphic to $(X,\cdot)$ and $\mathcal{G}(A)\cong (A,\circ)$, hence $(A,\circ)\cong(\mathbb{Z},+)$. 
By the previous lemma, $A$ is the trivial left brace and, consequently, $\lambda_a = \id_A$, for every $a\in A$. 
In this way, the unique transitive cycle bases of $A$ are $\{1\}$ and $\{-1\}$. By \cref{rump20}, since $a\bullet b = (\lambda_a(g))^{-}\circ b$, for some element $g$ of a transitive cycle base, we obtain that $a\bullet b = b + 1$, for all $a,b\in A$, if $g = -1$, and $a\bullet b = b - 1$, for all $a,b\in A$, if $g=1$.
Therefore, the claim follows.
\end{proof}

\section{Orthogonal dynamical extensions of cycle sets}

In this section, we consider a particular type of dynamical extensions which we call orthogonal. Specifically, we focus on those that are constant, and we show that, for these extensions, one can simplify the cocycle-condition \eqref{cociclo}. This result allows for adapting \cite[Theorem 7]{cacsp2018} to characterize them.
\medskip

Let us begin by recalling some suitable notions (see for instance \cite{dixon1996permutation}). If $G$ is a finite transitive group acting on a set $X$, then $G$ is said to be \textit{imprimitive} if there exists a subset $B$ of $X$, $B\neq X$, with at least two elements such that, for each permutation $g$ of $G$, either $g(B)=B$ or $g(B)\cap B=\emptyset$. The subsets $g(B)$ of $X$ are called \textit{blocks} and the set $\{g(B)\}_{g\in G}$ is said to be a \textit{system of blocks}. In this way, the action of $G$ on $X$ induces an action on a system of blocks in a natural way. \\
Let $G$ be an imprimitive group acting on a set $X$ of size $mn$, for some natural numbers $n$ and $m$ greater than $1$, and let $\mathcal{A}=\{A_1,\ldots,A_m \}$ and $\mathcal{B}=\{B_1,\ldots,B_n \}$ be two systems of blocks of size $m$ and $n$. Referring to \cite{DoKo09}, we say that two systems of blocks $\mathcal{A}$ and $\mathcal{B}$ are \textit{orthogonal} if $|A_i\cap B_j|=1$, for all $i\in \{1,\ldots,m\}$ and $j\in \{1,\ldots,n\}$.
The following classical lemma involving imprimitive groups is useful for our purpose.

\begin{lemma}[Lemma 2.2, \cite{Do06}]\label{quagr}
Let $G$ be an imprimitive group acting on a set $X$ and let $\mathcal{A}$ and $\mathcal{B}$ be two orthogonal systems of blocks. Then, the action of $G$ on $X$ is equivalent to the action of $G$ on $\mathcal{A}\times \mathcal{B}$ given by $g(A,B):=(g(A),g(B))$, for all $A\in \mathcal{A}$, $B\in \mathcal{B}$.
\end{lemma}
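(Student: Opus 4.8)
The plan is to show that the two group actions---the natural action of $G$ on $X$ and the diagonal action of $G$ on $\mathcal{A}\times\mathcal{B}$---are intertwined by the bijection sending each point $x\in X$ to the unique pair $(A_i,B_j)$ of blocks containing it. First I would set up this map carefully: for each $x\in X$ let $A(x)\in\mathcal{A}$ denote the unique block of $\mathcal{A}$ containing $x$ and $B(x)\in\mathcal{B}$ the unique block of $\mathcal{B}$ containing $x$, and define $\Phi\colon X\to\mathcal{A}\times\mathcal{B}$ by $\Phi(x):=(A(x),B(x))$. Existence and uniqueness of $A(x)$ and $B(x)$ hold because $\mathcal{A}$ and $\mathcal{B}$ are systems of blocks and hence partition $X$.

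The central point is that $\Phi$ is a bijection, and this is exactly where the orthogonality hypothesis is used. Since $|X|=mn$ and $|\mathcal{A}\times\mathcal{B}|=mn$, it suffices to prove that $\Phi$ is surjective (or injective). Given any pair $(A_i,B_j)$, orthogonality gives $|A_i\cap B_j|=1$; let $x$ be the unique element of $A_i\cap B_j$. Then $A(x)=A_i$ and $B(x)=B_j$, so $\Phi(x)=(A_i,B_j)$, proving surjectivity. Injectivity follows from the same count, or directly: if $\Phi(x)=\Phi(y)$ then $x$ and $y$ lie in a common $\mathcal{A}$-block and a common $\mathcal{B}$-block, hence both lie in $A_i\cap B_j$ for some $i,j$, which is a singleton, forcing $x=y$.

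It then remains to verify equivariance, namely $\Phi(g(x))=g\,\Phi(x)$ for all $g\in G$ and $x\in X$, where the right-hand side is the diagonal action $g(A,B)=(g(A),g(B))$. This reduces to checking $A(g(x))=g(A(x))$ and $B(g(x))=g(B(x))$. For the first, note that $x\in A(x)$ implies $g(x)\in g(A(x))$; since $g(A(x))$ is again a block of the system $\mathcal{A}$ (the action of $G$ permutes the blocks of $\mathcal{A}$) and $A(g(x))$ is by definition the unique $\mathcal{A}$-block containing $g(x)$, uniqueness gives $A(g(x))=g(A(x))$. The argument for $\mathcal{B}$ is identical. Combining the bijectivity of $\Phi$ with equivariance shows that $\Phi$ is an isomorphism of $G$-sets, which is precisely the asserted equivalence of actions.

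I do not expect any serious obstacle here: the statement is essentially a bookkeeping lemma, and the only genuinely substantive ingredient is the translation of the orthogonality condition $|A_i\cap B_j|=1$ into the bijectivity of $\Phi$. The mildest subtlety worth stating explicitly is that $g$ maps $\mathcal{A}$-blocks to $\mathcal{A}$-blocks and $\mathcal{B}$-blocks to $\mathcal{B}$-blocks (rather than interchanging the two systems); this is built into the definition of a system of blocks as a $G$-invariant partition, so no extra hypothesis is needed.
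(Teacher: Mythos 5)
Your proof is correct and is the standard argument for this fact; the paper itself does not prove the lemma but simply imports it from the cited reference (Lemma 2.2 of Dobson's paper), where the proof proceeds exactly as you describe: the map $x\mapsto(A(x),B(x))$ is a bijection by orthogonality and is $G$-equivariant because $G$ permutes the blocks within each system.
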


Following the paper by Vendramin \cite{vendramin2016extensions}, if $X$ is a cycle set, $S$ a set and $\alpha:X\times X\times S\longrightarrow \Sym(S)$, $\alpha(x,y,s)\mapsto\alpha_{(x,y)}(s,-)$ a function such that
\begin{equation}\label{cociclo}
\alpha_{(x\cdot y,x\cdot z)}(\alpha_{(x,y)}(r,s),\alpha_{(x,z)}(r,t))=\alpha_{(y\cdot x,y\cdot z)}(\alpha_{(y,x)}(s,r),\alpha_{(y,z)}(s,t)),
\end{equation}
for all $x,y,z\in X$ and $r,s,t\in S$, then $\alpha$ is said to be a \textit{dynamical cocycle} and the operation $\cdot$ given by 
$$
(x,s)\cdot (y,t):=(x\cdot y,\alpha_{(x,y)}(s,t)),
$$
for all $x,y\in X$ and $s,t\in S$ makes $X\times S$ into a cycle set which we denote by $X\times_{\alpha} S$ and we call \textit{dynamical extension} of $X$ by $\alpha$. A dynamical cocycle is said to be \textit{constant} if $\alpha_{(x,y)}(s,-)=\alpha_{(x,y)}(t,-)$, for all $x,y\in X$ and $s,t\in S$. In this case, $X\times_{\alpha} S$ is said to be a \textit{constant dynamical extension}.\\
Moreover, a dynamical extension $X\times_{\alpha} S$ is called \textit{indecomposable} if $X\times_{\alpha} S$ is an indecomposable cycle set: by \cite[Theorem 7]{cacsp2018}, this happens if and only if $X$ is an indecomposable cycle set and the subgroup of $\mathcal{G}(X\times S)$ generated by $\{h \ | \ \forall \, s\in S \quad h(y,s)\in \{y\}\times S \}$ acts transitively on $\{y\}\times S$, for some $y\in X$. In this case, $\mathcal{G}(X\times S)$ acts imprimitively and the set $\{ \{x\}\times S\}_{x\in X}$ is a system of blocks which we call the \textit{system of blocks induced by $X$} and we denote it by $\mathcal{B}_{X}$.

\begin{defin}
Given a finite indecomposable cycle set $X$ and $X\times_{\alpha} S$ a (constant) indecomposable dynamical extension of $X$ by $\alpha$, we say that $X\times_{\alpha} S$ is
a \emph{(constant) orthogonal dynamical extension of $X$ by $\alpha$} if the permutation group $\mathcal{G}(X\times S)$ has a system of blocks $\mathcal{A}$ which is orthogonal to $\mathcal{B}_X$. 
\end{defin}

\medskip

Let us observe that one can find examples of orthogonal dynamical extensions in \cite{cacsp2018}, \cite{capiru2020}, and \cite{JePiZa20x}. Below, we recall some of them.

\begin{exs}
\hspace{1mm}
\begin{enumerate}
    \item[1)] If $X$ and $Y$ are indecomposable cycle sets of multipermutation level $1$ and having coprime cardinalities, the direct product $X\times Y$ coincides with the orthogonal dynamical extension of $X$ by $\alpha$, where $\alpha:X\times X\times Y\rightarrow \Sym(Y)$ is the function given by $\alpha_{(x,y)}(s,-):=\sigma_s$, for all $x,y\in X$ and $s\in Y$.
    \item[2)] Let $p$ be a prime number and $X=Y:=\mathbb{Z}/p\mathbb{Z}$ and consider $X$ as the cycle set given by $x\cdot y:=y+1$, for all $x,y\in X$. Moreover, let $\alpha$ be the function from $X\times X\times Y$ to $\Sym(Y)$ given by $\alpha_{(x,y)}(s,t):=t+x$, for  all $(x,y),(s,t)\in X\times Y$. Then $X\times_{\alpha} Y$ is an orthogonal dynamical extension of $X$ by $\alpha$.
\end{enumerate}
\end{exs}

However, several examples of dynamical extension that are not orthogonal occur in literature.

\begin{exs}
\hspace{1mm}
\begin{enumerate}
    \item[1)] Every indecomposable dynamical extension having prime-power size and cyclic permutation group can not be obtained as an orthogonal dynamical extension. 
    \item[2)] Let $k$ be a natural number,  $X:=\mathbb{Z}/k\mathbb{Z}$ the cycle set given by $x\cdot y:=y+1$, for all $x,y\in X$, $A = B:=\mathbb{Z}/2\mathbb{Z}$, and $S:=A\times B$. Moreover, define $\beta:A\times A\times X\rightarrow \Sym(B)$ the function given by $\beta_{(a,b)}(x,-):=\id_B$, if $a=b$, and $\beta_{(a,b)}(x,-):=(0\;1)$, otherwise, define $\gamma:B\rightarrow \Sym(A)$  the function given by $\gamma_b(a):=a-b-1$, for  
    every $b\in B$, and let $\alpha:X\times X\times S\rightarrow \Sym(S)$ be the function given by
    \begin{center}
$\alpha_{(x,y)}((a,b),(c,d)):=\begin{cases} (c,\beta_{(a,c)}(x,d)), & \mbox{if }x= y \\ (\gamma_b(c),d), & \mbox{if }\mbox{ }x\neq y\end{cases},$
\end{center}
for all $x,y\in X$, $(a,b),(c,d)\in S$. Then, $X\times_{\alpha} S$ is a dynamical extension of $X$ by $\alpha$ that is not orthogonal. Indeed, if $\mathcal{A}=\{A_1,\dots,A_k \}$ is a system of blocks orthogonal to $\mathcal{B}_X$, then every block $A_i$ is of the form $A_i:=\{(1,a_1,b_1),\ldots,(k,a_k,b_k)\}$. But one can show that there 
exists $g\in \mathcal{G}(X\times_{\alpha} S)$ such that $g(1,a_1,b_1)=(1,a_1,b_1)$ and $g(k,a_k,b_k)= (k,a_g,b_k) $ for an element $a_g$ different from $a_k$, a contradiction.
\end{enumerate}
\end{exs}

\medskip

From now on, in the rest of this section we focus on constant orthogonal dynamical extensions. As noted in \cite[Section 3]{vendramin2016extensions}, to obtain constant dynamical extensions, the cocycle-condition \eqref{cociclo} can be simplified. Now, we show that for constant orthogonal dynamical extensions the condition \eqref{cociclo} can be even more simplified.


\begin{prop}\label{opera}
Let $X\times_{\alpha} S$ be a constant orthogonal dynamical extension of $X$ by $\alpha$. Then, after renaming the elements of the set $S$, the function $\alpha$ depends only on the first variable, i.e., $\alpha_{(x,y)}(-,t)=\alpha_{(x,y')}(-,t)$, for all $x,y,y'\in X$ and $t\in S$.
\end{prop}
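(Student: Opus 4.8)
The plan is to exploit the orthogonality hypothesis through \cref{quagr} in order to choose coordinates on $S$, one copy for each fibre $\{x\}\times S$, that straighten the orthogonal block system into a \emph{horizontal} one; the invariance of this straightened system under the generators of $\mathcal{G}(X\times S)$ will then be exactly the asserted independence of $\alpha$ from its second base-variable.

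First I would fix notation for the orthogonal system. Write $\mathcal{B}_X=\{\{x\}\times S\}_{x\in X}$; an orthogonal system $\mathcal{A}$ must then consist of $|S|$ transversals, each meeting every fibre once. Choosing a bijection between the index set of $\mathcal{A}$ and $S$, I write $\mathcal{A}=\{A_s\}_{s\in S}$ and record, for each $x\in X$, the unique point $(x,f_s(x))$ of $A_s$ lying in the fibre over $x$. Orthogonality says precisely that, for every fixed $x$, the assignment $s\mapsto f_s(x)$ is a bijection of $S$; let $g_x\in\Sym(S)$ denote its inverse.

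Next I would perform the fibrewise renaming $\Phi(x,t):=(x,g_x(t))$ and transport the cycle-set structure along $\Phi$. Since $\Phi$ preserves every fibre and projects to $\id_X$, the transported structure is again a dynamical extension of $X$, say by a cocycle $\beta$, so that $\beta$ automatically satisfies the cocycle-condition without further checking. A one-line computation gives $\beta_{(x,y)}(u,v)=g_{x\cdot y}\!\left(\alpha_{(x,y)}\!\left(g_x^{-1}(u),g_y^{-1}(v)\right)\right)$. In particular $\beta$ inherits constancy, its value being independent of $u$ because that of $\alpha$ is independent of its first slot; and, by construction, $\Phi(A_s)=X\times\{s\}$, so the image of $\mathcal{A}$ is the horizontal family $\{X\times\{s\}\}_{s\in S}$. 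As $\Phi$ is an isomorphism of cycle sets, this horizontal family is a system of blocks for the permutation group of the transported extension.

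The final and decisive step is to read off block-invariance. Applying a generator $\sigma_{(x,u)}$ of the transported extension to a horizontal block $X\times\{c\}$ produces the set $\{(x\cdot y,\beta_{(x,y)}(u,c)) : y\in X\}$; as $y$ runs over $X$ the first coordinate $x\cdot y=\sigma_x(y)$ runs over all of $X$, so this set can be a single block $X\times\{c'\}$ only if $\beta_{(x,y)}(u,c)$ is constant in $y$. Hence $\beta_{(x,y)}(u,c)$ is independent of $y$ for all $x,u,c$, which is exactly the asserted property after the renaming. I expect the only delicate point to be the bookkeeping of the transport formula for $\beta$ together with the verification that the straightened family is genuinely invariant under the group; the conceptual core --- that orthogonality lets the $\mathcal{A}$-coordinate evolve independently of the $\mathcal{B}_X$-coordinate, which is the content of \cref{quagr} --- is what makes the last step immediate.
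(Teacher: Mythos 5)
Your proof is correct and follows essentially the same route as the paper's: rename $S$ fibrewise so that the orthogonal system becomes the horizontal family $\{X\times\{s\}\}_{s\in S}$, then read off that the second coordinate of $\sigma_{(x,u)}(y,t)$ cannot depend on $y$. The only cosmetic difference is that the paper extracts this last step by invoking \cref{quagr} (the product form of the action on $\mathcal{B}_X\times\mathcal{A}$), whereas you argue directly from the invariance of the straightened block system, with the renaming $\Phi$ spelled out more explicitly than in the paper.
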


\begin{proof}
Let $q$ be a natural number such that $\mathcal{A}:=\{A_1,...,A_q\}$ is a system of blocks  orthogonal to $\mathcal{B}_X$. 
Thus, we necessarily have that 
$$A_i=\{ (x_{i_1},s_{i_1}),\ldots, (x_{i_{|X|}},s_{i_{|X|}})\}$$
where $x_{i_k}\neq x_{i_j}$ if $k\neq j$, for every $i\in \{1,...,q\}$. Therefore, without loss of generality, after renaming the elements of $S$, we can suppose that $s_{i_1}=\cdots=s_{i_{{|X|}}}$, for every $i\in \{1,...,q\}$. In this way, for every $A\in \mathcal{A}$, there exists an element $s\in S$ such that $A=X\times \{s\}$, hence we can identify $\mathcal{B}_X\times \mathcal{A}$ with $X\times S$. By \cref{quagr}, we have 
$$
(x,s)\cdot (y,t)=(\delta_{(x,s)}(y),\delta_{(x,s)}(t) ),
$$
for all $x,y\in X$ and $s,t\in S$, where $\delta_{(x,s)} $ is an element of $\mathcal{G}(X\times_{\alpha} S)$ depending on $x$ and $s$. It follows that  
$$
(x\cdot y,\alpha_{(x,y)}(-,t))=(\delta_{(x,s)}(y),\delta_{(x,s)}(t) ),
$$
for all $x,y\in X$ and $s,t\in S$, therefore $\alpha_{(x,y)}(-,t)=\delta_{(x,s)}(t) $, i.e., $\alpha$ only depends on $x$.
\end{proof}

Therefore, a constant orthogonal dynamical extension $X\times_{\alpha} S$ can be thought as a cycle set on $X\times S$ obtained by a map $\beta:X\longrightarrow \Sym(S),$ $x\mapsto \beta_x$ from $X$ to the symmetric group of $S$ such that $\beta_{x\cdot y}\beta_x=\beta_{y\cdot x}\beta_y$ and $\alpha_{(x,y)}(s,-)=\beta_x$, for all $x,y\in X$, $s\in S$.
\smallskip

As a consequence of the previous proposition, we give a characterization for constant orthogonal dynamical extensions $X\times_{\alpha} S$.

\begin{theor}\label{carindec}
Let $X$ be a finite indecomposable cycle set. Moreover, let $S$ be a finite set and $\alpha:X\longrightarrow \Sym(S)$ a map from $X$ to the symmetric group of $S$ such that $\alpha_{x\cdot y}\alpha_x=\alpha_{y\cdot x}\alpha_y$, for all $x,y\in X$. Then, the binary operation on $X\times S$ given by 
$$
(x,s)\cdot (y,t):=(x\cdot y,\alpha_{x}(t)),
$$
makes $X\times S$ into a constant orthogonal dynamical extension of $X$ by $\alpha$ if and only if the subgroup $H_y$ of $\Sym(X\times S)$ given by
$$
H_y:=\{ (\sigma^{\epsilon_1}_{x_1}\cdots\sigma^{\epsilon_n}_{x_n},\alpha^{\epsilon_1}_{x_1}\cdots\alpha^{\epsilon_n}_{x_n})\ | \  n\in \mathbb{N},\ \epsilon_1,...,\epsilon_n\in \{-1,1\} \quad \sigma^{\epsilon_1}_{x_1}\cdots\sigma^{\epsilon_n}_{x_n}(y)=y\} 
$$
acts transitively on $\{y\}\times S$, for some $y\in X$. In this case, the set $\{X\times \{s\}\}_{s\in S}$ is a system of blocks orthogonal to $\mathcal{B}_X$.\\
Moreover, every constant orthogonal dynamical extension can be constructed in this way.
\end{theor}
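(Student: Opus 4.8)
The plan is to exploit the fact that, for the operation $(x,s)\cdot(y,t)=(x\cdot y,\alpha_x(t))$, each left multiplication $\sigma_{(x,s)}$ is independent of $s$ and acts as a \emph{product permutation}: $\sigma_{(x,s)}(y,t)=(\sigma_x(y),\alpha_x(t))$. First I would check that this operation really makes $X\times S$ into a (non-degenerate) cycle set, which amounts to verifying that the constant cocycle $\alpha_{(x,y)}(s,t):=\alpha_x(t)$ satisfies \eqref{cociclo}; a direct substitution collapses \eqref{cociclo} to exactly the hypothesis $\alpha_{x\cdot y}\alpha_x=\alpha_{y\cdot x}\alpha_y$, as already noted in the discussion following \cref{opera}. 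Consequently $\mathcal{G}(X\times S)$ is generated by the product permutations $\Sigma_x:=(\sigma_x,\alpha_x)$, and every element of $\mathcal{G}(X\times S)$ has the form $(\sigma_{x_1}^{\epsilon_1}\cdots\sigma_{x_n}^{\epsilon_n},\,\alpha_{x_1}^{\epsilon_1}\cdots\alpha_{x_n}^{\epsilon_n})$, since a product of product permutations is again one with componentwise products.

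The key structural observation I would record is that, with this description, an element $h=(\pi,\tau)\in\mathcal{G}(X\times S)$ satisfies $h(y,s)\in\{y\}\times S$ for all $s\in S$ if and only if $\pi(y)=y$. Hence the set $\{h \mid \forall\,s\in S\ \ h(y,s)\in\{y\}\times S\}$ is already closed under products and inverses, and coincides exactly with the subgroup $H_y$ of the statement. With this identification in hand, the indecomposability content of both implications becomes a direct appeal to the criterion of \cite[Theorem 7]{cacsp2018}: since $X$ is indecomposable by hypothesis, $X\times S$ is an indecomposable cycle set if and only if $H_y$ acts transitively on $\{y\}\times S$ for some $y\in X$.

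To finish the ($\Leftarrow$) direction I would show that orthogonality is then automatic. Because each generator sends $X\times\{s\}$ to $X\times\{\alpha_x(s)\}$, the partition $\mathcal{A}:=\{X\times\{s\}\}_{s\in S}$ is invariant under $\mathcal{G}(X\times S)$ and is therefore a system of blocks; moreover $|(X\times\{s\})\cap(\{x\}\times S)|=1$ for all $x\in X$ and $s\in S$, so $\mathcal{A}$ is orthogonal to $\mathcal{B}_X$. Thus $H_y$ transitive yields an indecomposable, orthogonal extension. Conversely, an orthogonal dynamical extension is by definition indecomposable, so the same criterion immediately forces $H_y$ to act transitively, giving ($\Rightarrow$). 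Finally, for the ``moreover'' clause I would invoke \cref{opera}: after renaming $S$, any constant orthogonal dynamical extension has cocycle depending only on the first variable, i.e.\ it is of the form $(x,s)\cdot(y,t)=(x\cdot y,\alpha_x(t))$ with $\alpha_{x\cdot y}\alpha_x=\alpha_{y\cdot x}\alpha_y$, and hence is one of the extensions constructed above.

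I expect the only genuinely delicate point to be the bookkeeping that identifies $H_y$ with the generating set appearing in the criterion and confirms that $\mathcal{A}$ is a bona fide system of blocks (which presupposes the transitivity of $\mathcal{G}(X\times S)$, i.e.\ indecomposability, before ``orthogonal'' is even meaningful); everything else is routine once the product-permutation structure of the generators is isolated.
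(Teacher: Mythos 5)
Your proposal is correct and follows essentially the same route as the paper: verify the cocycle condition collapses to $\alpha_{x\cdot y}\alpha_x=\alpha_{y\cdot x}\alpha_y$, apply the indecomposability criterion of \cite[Theorem 7]{cacsp2018} via the subgroup $H_y$, observe that $\{X\times\{s\}\}_{s\in S}$ is a block system orthogonal to $\mathcal{B}_X$, and deduce the converse from \cref{opera}. Your explicit identification of the generators as product permutations and of $H_y$ with the stabilizer-type subgroup in the cited criterion is a welcome elaboration of what the paper leaves as ``a long but simple calculation.''
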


\begin{proof}
By a long but simple calculation, one can verify that $(X\times S,\cdot)$ is a dynamical extension of $X$ by $\alpha$. Moreover, by \cite[Theorem 7]{cacsp2018} we have that it is indecomposable if and only if $H_y$ acts transitively on $\{y\}\times S$, for some $y\in X$. Now, since
$$
\sigma_{(x,s)}(y,t)\in X\times \{\alpha_x(t)\},
$$
for all $x,y\in X,$ $s,t\in S$, it follows that $\{X\times \{s\}\}_{s\in S}$ is a system of blocks. Obviously, it is orthogonal to $\mathcal{B}_X$.\\
The rest of the  
assertion follows by \cref{opera} and \cite[Theorem 7]{cacsp2018}.
\end{proof}
\medskip
\begin{rem}\label{rema}
Given two cycle sets $X$ and $S$ and $\alpha:X\rightarrow \Aut(S)$ a function such that $\alpha_{x\cdot y}\alpha_x=\alpha_{y\cdot x}\alpha_y$, for all $x,y\in X$, then the binary operation $\cdot$ on $X\times S$ given by $(x,s)\cdot (y,t):=(x\cdot y,\alpha_x(s\cdot t))$ makes $X\times S$ into a cycle set. 
Moreover, it is easy to show that it is isomorphic to the Rump's semidirect product of the cycle sets $X$ and $S$ via $\alpha$, a construction of cycle sets introduced by Rump in \cite{Ru08}. Since a set $S$ can be endowed with a cycle set structure by $x\cdot y:=y$, for all $x,y\in S$, the previous results ensure that every constant orthogonal dynamical extension can be obtained as particular semidirect product of cycle sets. On the other hand, one can show that every semidirect product $X\ltimes_{\alpha} S$ of two finite cycle sets $X$ and $S$ via $\alpha$ gives rise to orthogonal dynamical extensions whenever  $X\ltimes_{\alpha} S$ is indecomposable. 
\end{rem}

\cref{carindec} turns out to be useful in the investigation of indecomposable cycle sets for which the retraction has an orthogonal system of blocks. Indeed, if $X$ is a finite indecomposable cycle set, by \cite[Proposition 8]{cacsp2018}, $X$ is isomorphic to a constant dynamical extension of $\Ret(X)$, hence we have the following corollary.

\begin{cor}
Let $X$ be a finite indecomposable cycle set such that $\mathcal{B}_{\Ret(X)}$ has an orthogonal system of blocks. Then, there exist a set $S$ and a function $\alpha:\Ret(X)\longrightarrow \Sym(S)$ such that the operation $\cdot$ given by
$$
(x,s)\cdot (y,t):=(x\cdot y,\alpha_{x}(t)),
$$
for all $x,y\in \Ret(X)$ and $s,t\in S$ makes $\Ret(X)\times S$ into a cycle set isomorphic to $I$.
\end{cor}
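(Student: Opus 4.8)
The plan is to combine two earlier results: \cref{carindec}, which characterizes constant orthogonal dynamical extensions, and the cited \cite[Proposition 8]{cacsp2018}, which states that any finite indecomposable cycle set $X$ is isomorphic to a constant dynamical extension of its retraction $\Ret(X)$. The corollary is essentially a specialization of \cref{carindec} to the case where the base cycle set is $\Ret(X)$ and the ambient extension is $X$ itself. So first I would invoke \cite[Proposition 8]{cacsp2018} to write $X\cong \Ret(X)\times_{\beta} S$ for some set $S$ and some constant dynamical cocycle $\beta$; here the base is indecomposable because the retraction of an indecomposable cycle set is again indecomposable. (Note the statement as typed refers to a set ``$I$'' that is presumably a typo for $X$; I would read the conclusion as asserting an isomorphism with $X$.)

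The second step is to use the hypothesis that $\mathcal{B}_{\Ret(X)}$ has an orthogonal system of blocks. Since $X$ realizes $\Ret(X)$ as a constant dynamical extension, the permutation group $\mathcal{G}(\Ret(X)\times S)=\mathcal{G}(X)$ acts imprimitively with $\mathcal{B}_{\Ret(X)}$ as a system of blocks, and by hypothesis this system admits an orthogonal partner $\mathcal{A}$. This is exactly the situation covered by the definition of a constant orthogonal dynamical extension, so $X$ is in fact a \emph{constant orthogonal} dynamical extension of $\Ret(X)$. Now I can apply \cref{opera}: after renaming the elements of $S$, the cocycle depends only on the first variable, so there is a map $\alpha:\Ret(X)\longrightarrow\Sym(S)$ with $\alpha_{x\cdot y}\alpha_x=\alpha_{y\cdot x}\alpha_y$ for all $x,y$, and the operation takes the displayed form
$$(x,s)\cdot(y,t)=(x\cdot y,\alpha_x(t)).$$
By \cref{carindec}, this operation makes $\Ret(X)\times S$ into a constant orthogonal dynamical extension isomorphic to the original $X$, which is the desired conclusion.

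The only genuinely substantive point, and the step I would watch most carefully, is verifying that the orthogonal system $\mathcal{A}$ guaranteed for $\mathcal{B}_{\Ret(X)}$ genuinely coincides with the block structure $\mathcal{B}_X$ arising inside the extension $\Ret(X)\times_{\beta}S$, so that the hypotheses of the definition of orthogonality and of \cref{opera} apply literally. Concretely, one must check that the system of blocks on $X$ induced by the base $\Ret(X)$ under the extension isomorphism is precisely $\mathcal{B}_{\Ret(X)}$ transported along $X\cong\Ret(X)\times S$; this is where the identification $\mathcal{G}(X)\cong\mathcal{G}(\Ret(X)\times S)$ and the definition of the induced system $\mathcal{B}_{\Ret(X)}$ must be reconciled. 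Everything else—the cocycle identity, the indecomposability of $\Ret(X)$, and the final form of the product—follows mechanically from the already-established \cref{opera} and \cref{carindec}, so I would present those as immediate applications rather than redo the routine computations.
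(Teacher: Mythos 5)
Your proposal is correct and follows essentially the same route as the paper: the paper's own proof is a one-line citation of \cite[Proposition 8]{cacsp2018} together with \cref{carindec} (via \cref{opera}), which is exactly the combination you spell out. Your added care in identifying the transported block system with $\mathcal{B}_{\Ret(X)}$, and your reading of ``$I$'' as a typo for $X$, are both sound and merely make explicit what the paper leaves implicit.
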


\begin{proof}
It follows by \cite[Proposition 8]{cacsp2018} and \cref{carindec}.
\end{proof}

In other words, the previous corollary states that to construct all the finite indecomposable cycle sets having $X$ as retraction and such that there exists a system of blocks orthogonal to $\mathcal{B}_{X}$ one has to classify all the pair $(S,\alpha)$ where $S$ is a set, $\alpha$ is a function from $X$ to $\Sym(S)$ such that $\alpha_{x\cdot y}\alpha_x=\alpha_{y\cdot x}\alpha_y$, for all $x,y\in X$, and the subgroup $H_y$ of  \cref{carindec} acts transitively on $\{y\}\times S$. 
\bigskip

Note that if $X\times_{\alpha} S$ is a constant indecomposable dynamical extension, the system of blocks $\mathcal{B}_{\Ret(X\times_{\alpha} S)}$ induced by the retraction is in general different from the system of blocks $\mathcal{B}_{X}$ induced by $X$ (see, for instance, \cref{refret}). However, $\mathcal{B}_{\Ret(X\times_{\alpha} S)}$ and $\mathcal{B}_{X}$ are closely related, as one can see in the simple but useful \cref{refinement}. Before introducing such a proposition, let us recall that if $G$ is an imprimitive group acting on a finite set $X$ and $\mathcal{A},\mathcal{B}$ are system of blocks, then $\mathcal{A}$ is said to be a \textit{refinement} of $\mathcal{B}$ if there exist $A\in \mathcal{A}$, $B\in \mathcal{B}$ such that $A\subseteq B$.

\begin{prop}\label{refinement}
Let $X\times_{\alpha} S$ be a constant indecomposable dynamical extension. Then, $\mathcal{B}_{X}$ is a refinement of $\mathcal{B}_{\Ret(X\times_{\alpha} S)}$.
\end{prop}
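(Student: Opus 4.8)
The plan is to reduce the statement to a direct computation of the left multiplications in the extension, followed by the definition of the retract relation. First I would use that the cocycle $\alpha$ is constant, so that $\alpha_{(x,y)}(s,t)=\alpha_{(x,y)}(s',t)$ for all $s,s'\in S$. Writing $\sigma_{(x,s)}$ for the left multiplication by $(x,s)$ in $X\times_\alpha S$ and recalling that $(x,s)\cdot(y,t)=(x\cdot y,\alpha_{(x,y)}(s,t))$, this yields
\[
\sigma_{(x,s)}(y,t)=(x\cdot y,\alpha_{(x,y)}(s,t))=(x\cdot y,\alpha_{(x,y)}(s',t))=\sigma_{(x,s')}(y,t),
\]
for all $y\in X$ and $t\in S$. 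Hence $\sigma_{(x,s)}=\sigma_{(x,s')}$ for every $s,s'\in S$, which is exactly the assertion $(x,s)\sim_\sigma(x,s')$. In other words, for each fixed $x\in X$ the whole set $\{x\}\times S$ lies in a single class of the retract congruence $\sim_\sigma$ of $X\times_\alpha S$.

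Next I would identify the blocks of $\mathcal{B}_{\Ret(X\times_\alpha S)}$ with the classes of $\sim_\sigma$ on $X\times S$: these classes are precisely the fibres of the canonical projection of $X\times_\alpha S$ onto its retraction, and since $\sim_\sigma$ is a congruence of the cycle set, each generator $\sigma_{(x,s)}$ of $\mathcal{G}(X\times_\alpha S)$ permutes them; by the indecomposability of $X\times_\alpha S$ they therefore form a system of blocks. Combining this with the previous paragraph, the block $\{x\}\times S\in\mathcal{B}_X$ is contained in the single $\sim_\sigma$-class of any of its points, that is, in one block $B\in\mathcal{B}_{\Ret(X\times_\alpha S)}$.

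Finally, by the definition of refinement recalled just before the statement, the inclusion $\{x\}\times S\subseteq B$ is exactly what is required, so $\mathcal{B}_X$ is a refinement of $\mathcal{B}_{\Ret(X\times_\alpha S)}$. Since $\mathcal{G}(X\times_\alpha S)$ acts transitively and sends blocks of each system to blocks of the same system, the inclusion in fact propagates to every block of $\mathcal{B}_X$, but one instance already suffices for the claim. I do not anticipate a genuine obstacle here: the only delicate point is the constancy of $\alpha$, which decouples $\sigma_{(x,s)}$ from the second factor $s$ and thereby collapses each fibre $\{x\}\times S$ into a single retract class.
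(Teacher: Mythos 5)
Your argument is correct and is essentially the paper's own proof: both use the constancy of the cocycle to see that $\sigma_{(x,s)}$ is independent of $s$, so each fibre $\{x\}\times S$ lies in a single retract class, i.e.\ in a single block of $\mathcal{B}_{\Ret(X\times_{\alpha}S)}$. The extra care you take in identifying the blocks of $\mathcal{B}_{\Ret(X\times_{\alpha}S)}$ with the classes of $\sim_\sigma$ is implicit in the paper but does not change the route.
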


\begin{proof}
By \cite[Propositon 2]{cacsp2018}, we have that 
$$
(x,s)\cdot (y,t):=(x\cdot y,\alpha_{x,y}(t)),
$$
for all $x,y\in X$ and $s,t\in S$. Therefore, if $B\in \mathcal{B}_{\Ret(X\times_{\alpha} S)}$ and $(x,s)\in B$, then $\{x\}\times S \subseteq B$, hence the claim follows.
\end{proof}

\section{Description of constant orthogonal dynamical extensions by left braces}
 
The goal of this section is to give a 
relationship between constant orthogonal dynamical extensions and semidirect product of left braces. This link naturally leads to a description of constant orthogonal dynamical extensions and provides a way to investigate these extensions by left  braces.
\medskip

For our purpose, it is useful to recall the semidirect product of left braces  in the same terms used in \cite{cedo2014braces}. Given two left braces $A$ and $H$ and a homomorphism $\alpha:A\rightarrow \Aut(H)$ from the group $(A,\circ)$ to the automorphisms of the left brace $H$, the \emph{semidirect product of $A$ and $H$ via $\alpha$} is the left brace $(A\times H, \ +,\ \circ)$, where the sum is the direct sum of the groups $(A,+)$ and $(H,+)$ and the multiplication is the semidirect product of the multiplicative groups $(A,\circ)$ and $(H,\circ)$ via $\alpha$.
\medskip

Let $A$ be a left brace, $H$ a trivial left brace, and $X$ (resp. $S$) a cycle base of $A$ (resp. $H$). By the results provided in \cite{Ru08,rump2006modules}, one can easily show that semidirect products of $A$ and $H$ and semidirect products of $X$ and $S$ are ``compatible'' in the following sense: if $A\ltimes_{\alpha} H$ is a semidirect product such that $\alpha_a(S)=S$, for every $a\in A$, then $\alpha$ induces a semidirect product of the cycle sets $X$ and $S$; on the other side, we have that if $X$ is a cycle set and $S$ a trivial cycle set, then a semidirect product of the cycle sets $X$ and $S$ induces a semidirect product of the left braces $G_X$ and $\mathbb{Z}^S$, where $G_X$ is the left brace 
having as additive group the free abelian group $\mathbb{Z}^X$ and as multiplicative group the one generated by the elements of $X$ subject to the relations $x*y=\sigma_x^{-1}(y)*(\sigma_x^{-1}(y)\cdot x)$, for all $x,y\in X$,
and $\mathbb{Z}^S$ 
is the trivial left brace on the free abelian group $\mathbb{Z}^S$.
\medskip

In the previous section, we showed that to obtain a constant orthogonal dynamical extension $X\times_{\alpha} S$, it is required the transitivity of a subgroup $H_y$ of $\mathcal{G}(X\times_{\alpha} S)$ on the set $\{y\}\times S$. 
Now, using the embedding of $X$ into a left brace $A$ as transitive cycle base, we show that the transitivity of $H_y$ on $\{y\}\times S$ can be carried into the richer structure of the left brace $A$.

\begin{theor}\label{th:ort-dynamical-ex}
Let A be a left brace, $S$ a finite set and $\bar{\alpha}:A\longrightarrow \Sym(S)$ an homomorphism from $(A,\circ)$ to $\Sym(S)$. Moreover, let $X$ be a finite transitive cycle base of $A$ and $e$ an element of $X$ and suppose that the stabilizer $A_e$ of $e$ in $A$ (by the map $\lambda$) acts transitively on $S$ by $\bar{\alpha}$. Then, the binary operation $\cdot$ on $X\times S$ given by
$$(x,s)\cdot (y,t):=(\lambda^{-1}_x(y),\bar{\alpha}^{-1}_x(t)) $$
makes $X\times S$ into a constant orthogonal dynamical extension
of $X$ by $\bar{\alpha}^{-1}_{|_X}$.\\
Conversely, any constant orthogonal dynamical extension can be constructed in this way.
\end{theor}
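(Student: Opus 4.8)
The plan is to reduce both directions to the characterisation of constant orthogonal dynamical extensions in \cref{carindec}. First I would record the cycle set carried by $X$. Since $\lambda$ is a homomorphism $(A,\circ)\to\Aut(A,+)$ and $X$ is a single $\lambda(A)$-orbit, the automorphism $\sigma_x:=\lambda_x^{-1}$ restricts to a permutation of $X$, so $x\cdot y:=\lambda_x^{-1}(y)$ is a well-defined operation on $X$. The cycle set identity $(x\cdot y)\cdot(x\cdot z)=(y\cdot x)\cdot(y\cdot z)$ is equivalent, after taking inverses and using that $\lambda$ is multiplicative, to $x\circ\lambda_x^{-1}(y)=y\circ\lambda_y^{-1}(x)$; the brace relation $a\circ\lambda_a^{-1}(b)=a+b$ (a direct consequence of \eqref{eq:gamma}) turns both sides into $x+y$, which agree because $(A,+)$ is abelian. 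Non-degeneracy is immediate since $x\cdot x=\lambda_x^{-1}(x)=-x^{-}$ (using $x^{-}\circ x=0$), an injective, hence on the finite set $X$ bijective, self-map. Indecomposability holds because $\langle\sigma_x\mid x\in X\rangle=\lambda(\langle X\rangle_{\circ})=\lambda(A)$ acts transitively on the single orbit $X$, where $\langle X\rangle_{\circ}=A$ is valid since $X$ is a transitive cycle base (cf.\ \cref{rump20}). Thus $(X,\cdot)$ is a finite indecomposable cycle set and $\alpha:=\bar\alpha^{-1}_{|_X}$ is the candidate cocycle.

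Next I would verify the cocycle hypothesis of \cref{carindec}, namely $\alpha_{x\cdot y}\alpha_x=\alpha_{y\cdot x}\alpha_y$. Writing $\alpha_x=\bar\alpha_x^{-1}=\bar\alpha_{x^{-}}$ and using that $\bar\alpha$ is a homomorphism, the left-hand side equals $(\bar\alpha_x\bar\alpha_{x\cdot y})^{-1}=\bar\alpha_{x\circ(x\cdot y)}^{-1}$, and by the same brace identity $x\circ(x\cdot y)=x\circ\lambda_x^{-1}(y)=x+y$; symmetrically the right-hand side is $\bar\alpha_{y+x}^{-1}$, and the two coincide by commutativity of $+$. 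Since the operation in the statement is exactly $(x,s)\cdot(y,t)=(x\cdot y,\alpha_x(t))$, \cref{carindec} already guarantees that $X\times S$ is a dynamical extension and that $\{X\times\{s\}\}_{s\in S}$ is orthogonal to $\mathcal{B}_X$, once the transitivity condition on the subgroup $H_y$ is checked.

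The heart of the argument is to translate that transitivity condition into the stabiliser hypothesis. Every generator of $\mathcal{G}(X\times S)$ has the shape $\sigma_{(x,s)}=(\sigma_x,\alpha_x)=(\lambda_x^{-1},\bar\alpha_x^{-1})$, and since both $\lambda$ and $\bar\alpha$ are homomorphisms out of $(A,\circ)$, an arbitrary product $\sigma_{x_1}^{\epsilon_1}\cdots\sigma_{x_n}^{\epsilon_n}$ collapses to $(\lambda_w,\bar\alpha_w)$ for a single element $w\in\langle X\rangle_{\circ}=A$. Consequently, taking $y=e$, one gets $H_e=\{(\lambda_w,\bar\alpha_w)\mid w\in A,\ \lambda_w(e)=e\}=\{(\lambda_w,\bar\alpha_w)\mid w\in A_e\}$, whose action on $\{e\}\times S$ is through $w\mapsto\bar\alpha_w$. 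Hence $H_e$ acts transitively on $\{e\}\times S$ if and only if $\bar\alpha(A_e)$ acts transitively on $S$, which is precisely the assumption; feeding this into \cref{carindec} completes the forward implication. I expect this identification of $H_e$ with the $\bar\alpha$-image of the point stabiliser to be the main obstacle, as it converts the opaque, word-based criterion of \cref{carindec} into the clean stabiliser condition; the essential input is that a transitive cycle base generates $(A,\circ)$, so that one and the same $w$ governs both coordinates and ranges over all of $A_e$.

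For the converse I would start from a constant orthogonal dynamical extension, which by \cref{opera} and \cref{carindec} is given by a finite indecomposable cycle set $X$, a finite set $S$, and a map $\alpha:X\to\Sym(S)$ with $\alpha_{x\cdot y}\alpha_x=\alpha_{y\cdot x}\alpha_y$ for which $H_y$ is transitive on $\{y\}\times S$ for some $y$. I take $A:=G_X$, the left brace into which $X$ embeds as a transitive cycle base with $x\cdot y=\lambda_x^{-1}(y)$, and define $\bar\alpha$ on the multiplicative generators by $\bar\alpha_x:=\alpha_x^{-1}$. To see that this extends to a homomorphism $(A,\circ)\to\Sym(S)$, I check that it respects the defining relations $x\circ y=u\circ(u\cdot x)$ with $u=\sigma_x^{-1}(y)$ (so that $y=x\cdot u$): applying $\bar\alpha$, the relation becomes $\alpha_x^{-1}\alpha_y^{-1}=\alpha_u^{-1}\alpha_{u\cdot x}^{-1}$, i.e.\ $\alpha_{x\cdot u}\alpha_x=\alpha_{u\cdot x}\alpha_u$, which is exactly the cocycle condition. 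Setting $e:=y$, the equivalence established above shows that the hypothesis that $A_e$ acts transitively on $S$ holds, and the operation $(x,s)\cdot(y,t)=(\lambda_x^{-1}(y),\bar\alpha_x^{-1}(t))$ reproduces the original extension since $\bar\alpha^{-1}_{|_X}=\alpha$, realising the given extension in the stated form.
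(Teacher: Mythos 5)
Your proposal is correct and follows essentially the same route as the paper: the cycle-set and cocycle conditions are both reduced to the symmetry of $\bar{\alpha}_{x+y}$ via $x\circ\lambda_x^{-1}(y)=x+y$, the transitivity of $H_e$ is translated into transitivity of $\bar{\alpha}(A_e)$ using that a transitive cycle base generates $(A,\circ)$, and the converse is obtained by extending $x\mapsto\alpha_x^{-1}$ to the multiplicative group of $G_X$ through its defining relations. Your explicit identification $H_e=\{(\lambda_w,\bar{\alpha}_w)\mid w\in A_e\}$ is a slightly cleaner packaging of the word-by-word computation the paper performs, but it is the same argument.
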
 
\begin{proof}
Clearly, the binary operation given by $x\cdot y:=\lambda^{-1}_x(y)$ makes $X$ into a cycle set. Moreover, we have that
\begin{align*}
\bar{\alpha}_{x+y}=\bar{\alpha}_{y+x} 
&\Rightarrow \bar{\alpha}_{x\circ \lambda^{-1}_x(y)}=\bar{\alpha}_{y\circ \lambda^{-1}_y(x)}\\
&\Rightarrow \bar{\alpha}^{-1}_{ \lambda^{-1}_x(y)} \bar{\alpha}^{-1}_{x}=\bar{\alpha}^{-1}_{ \lambda^{-1}_y(x)} \bar{\alpha}^{-1}_{y} 
\end{align*}
for all $x,y\in X$, hence $(X\times S,\cdot)$ is a cycle set. Clearly, it is a constant dynamical extension of $X$ by $S$ having dynamical cocycle $\alpha:=\bar{\alpha}^{-1}_{|_X}$.\\
Since $X$ is a transitive cycle base of $A$, by \cite[Proposition 9]{rump2020one} $X$ is an indecomposable cycle set. Therefore, it remains to show that the subgroup $H_e$ defined as in \cref{carindec} acts transitively on $\{e\}\times S$. Let $s_1$ and $s_2$ be elements of $S$. Since $A_e$ acts transitively on $S$ by $\bar{\alpha}$, there exists $a_e\in A_e$ such that $\bar{\alpha}(a_e)(s_1)=s_2$. 
Moreover, by \cite[Proposition 8]{rump2020one}, there exist $g_1,\ldots,g_n\in X$, $\epsilon_1,\ldots,\epsilon_n\in \{-1,1\}$  such that $b_e=g_1^{\epsilon_1}\circ\cdots\circ g_n^{\epsilon_n}$. Then, it follows that
\begin{align*}
    \bar{\alpha}(a_e)(s_1)=s_2 &\Rightarrow \bar{\alpha}(g_1^{\epsilon_1}\circ\cdots\circ g_n^{\epsilon_n})(s_1)=s_2\\ 
&\Rightarrow \bar{\alpha}_{g_1^{\epsilon_1}}\cdots  \bar{\alpha}_{g_n^{\epsilon_n}}(s_1)=s_2\\     
&\Rightarrow  \bar{\alpha}^{\epsilon_1}_{g_1} \cdots \bar{\alpha}^{\epsilon_n}_{g_n}(s_1)=s_2 
\end{align*}
and since $e=\lambda_{a_e}(e) = \lambda^{\epsilon_1}_{g_1} \cdots  \lambda^{\epsilon_n}_{g_n}(e)$ it follows that $H_e$ acts transitively on $\{e\}\times S$, hence $X\times S$ is indecomposable. Clearly, $\{X\times \{s\}\}_{s\in S}$ is a system of blocks orthogonal to $\mathcal{B}_X$.\\
Conversely, suppose that $X\times_{\alpha} S$ is a constant orthogonal dynamical extension. By \cite[Proposition 9]{rump2020one}, $X$ is a transitive cycle base of the left brace $G_X$ having as additive group the free abelian group $\mathbb{Z}^X$ and as multiplicative group the group generated by the elements of $X$ subject to the relation $x*y=\sigma_x^{-1}(y)*(\sigma_x^{-1}(y)\cdot x)$, for all $x,y\in X$. 
Then, since $\alpha_{x\cdot y}\alpha_x=\alpha_{y\cdot x}\alpha_y$, for all $x,y\in X$, replacing $y$ by $\sigma_x^{-1}(y)$ we obtain that $\alpha_{y}\alpha_x = \alpha_{\sigma_x^{-1}(y)\cdot x}\alpha_{\sigma_x^{-1}(y)}$ and hence $\alpha^{-1}_{x}\alpha^{-1}_y = \alpha^{-1}_{\sigma_x^{-1}(y)}\alpha^{-1}_{\sigma_x^{-1}(y)\cdot x} $. Therefore, the assignment $x\mapsto \alpha^{-1}_x$ can be extended to an action $\bar{\alpha}$ from the multiplicative group of $G_X$ to $\Sym(S)$. 
Using \cref{carindec}, by similar computations seen in the previous implication, we obtain that the stabilizer $(G_X)_e$ of an element $e\in X$ acts transitively on $S$ by $\bar{\alpha}$. Moreover, the indecomposable cycle set on $X\times S$ constructed starting from $G_X$ and $\bar{\alpha}$ as in the previous implication is exactly the dynamical extension $X\times_{\alpha} S$. Hence the proof is complete.
\end{proof}

\begin{rems}\hspace{1mm}
\begin{enumerate}
    \item By \cref{th:ort-dynamical-ex}, the classification of constant orthogonal dynamical extensions of a finite indecomposable cycle set $X$ reduces to the classification of the actions $\bar{\alpha}:A\longrightarrow \Sym(S)$, where $S$ is a finite set and $A$ is a left brace having $X$ as transitive cycle base and such that the stabilizer of an element $e\in X$ (by maps $\lambda)$ acts transitively on $S$ by $\bar{\alpha}$.
    \item An action of the multiplicative group $(A,\circ)$ of a left brace $A$ on a set $S$ is a particular case of an action of a skew left brace. This algebraic object has been recently introduced and studied by De Commer in \cite{de2019actions}.
\end{enumerate}
\end{rems}
\medskip

In the following result, we obtain that left braces are a powerful tools to describe constant orthogonal dynamical extensions and to check them by semidirect products.

\begin{cor}\label{th:teor20}
Let A be a left brace having $X$ as a finite transitive cycle base, $H$ a trivial left brace having $S$ as a finite cycle base. 
Assume that 
$\alpha:A\rightarrow \Aut(H)$ 
is a homomorphism from the group $(A,\circ)$ to 
the automorphisms of the left brace $H$ such that $\alpha(a)(S)=S$, for every $a\in A$, and there exists $e\in X$ such that the stabilizer $A_e$ acts transitively on $S$ by $\alpha$. Then, the semidirect product of the left brace $A\ltimes_{\alpha} H$ has the constant dynamical extension $X\times_{\bar{\alpha}} S$ as transitive cycle base, where $\bar{\alpha}:X\rightarrow \Sym(S)$ is given by $\bar{\alpha}(x):=\alpha^{-1}_{}(x)$, for every $x\in X$. Moreover, $X\times_{\bar{\alpha}} S$ is a constant orthogonal dynamical extension.\\
Conversely, every constant orthogonal dynamical extension can be constructed in this way.
\end{cor}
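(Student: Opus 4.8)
The plan is to deduce this corollary from \cref{th:ort-dynamical-ex} by transporting the data of the brace semidirect product into the data required by that theorem, and then reading off the cycle set structure directly from the $\lambda$-map of $A\ltimes_\alpha H$. Throughout I identify $X\subseteq A$ and $S\subseteq H$, so that $X\times S$ is literally a subset of the additive group of $A\ltimes_\alpha H=A\times H$. First I would build the bridge to \cref{th:ort-dynamical-ex}: since $\alpha(a)(S)=S$ and each $\alpha(a)$ is bijective, the restriction $\hat{\alpha}_a:=\alpha(a)|_S$ is a permutation of $S$, and because $\alpha$ is a homomorphism $(A,\circ)\to\Aut(H)$, the assignment $\hat{\alpha}\colon A\to\Sym(S)$ is a homomorphism from $(A,\circ)$. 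The hypothesis that $A_e$ acts transitively on $S$ by $\alpha$ is precisely that $A_e$ acts transitively on $S$ by $\hat{\alpha}$, so \cref{th:ort-dynamical-ex} applied to $A$, $S$ and $\hat{\alpha}$ already shows that $X\times S$ with $(x,s)\cdot(y,t):=(\lambda_x^{-1}(y),\hat{\alpha}_x^{-1}(t))$ is a constant orthogonal dynamical extension of $X$ by $\hat{\alpha}^{-1}_{|_X}$. As $\bar{\alpha}(x)=\alpha^{-1}(x)=\hat{\alpha}_x^{-1}$, this cycle set is exactly $X\times_{\bar{\alpha}}S$, which settles the ``orthogonal extension'' part of the assertion.

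The heart of the argument is the identification of this cycle set with the one attached to a transitive cycle base of $A\ltimes_\alpha H$, and the key computation is that of the $\lambda$-map of the semidirect product. Using that the addition is the direct sum, that the multiplication is $(a,h)\circ(a',h')=(a\circ a',\,h\circ\alpha_a(h'))$, and crucially that $H$ is a trivial brace (so its own $\lambda$ is the identity), a direct expansion of $\lambda_{(a,h)}(a',h')=-(a,h)+(a,h)\circ(a',h')$ gives
\begin{align*}
\lambda_{(a,h)}(a',h')=\bigl(\lambda_a(a'),\,\alpha_a(h')\bigr),
\end{align*}
independently of $h$. Consequently the $\lambda$-orbit of a fixed element $(e,s_0)$ is $\{(\lambda_a(e),\alpha_a(s_0))\mid a\in A\}$. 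Since $X$ is a transitive cycle base of $A$, the first coordinate ranges over all of $X$; and for $a$ in a fixed coset $a_0\circ A_e$ the first coordinate stays constant while, because $A_e$ acts transitively on $S$ by $\alpha$ and $\alpha_{a_0}(S)=S$, the second coordinate ranges over all of $S$. Hence the orbit is exactly $X\times S$, so $X\times S$ is a single $\lambda$-orbit.

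Inverting the displayed formula gives $\lambda_{(x,s)}^{-1}(y,t)=(\lambda_x^{-1}(y),\alpha_x^{-1}(t))$, so the cycle set attached to this base via the recipe of \cref{th:ort-dynamical-ex} is $(x,s)\cdot(y,t)=(\lambda_x^{-1}(y),\bar{\alpha}_x(t))$, that is, exactly $X\times_{\bar{\alpha}}S$. Being $\lambda$-invariant, the single orbit $X\times S$ generates a sub-brace in which it is a transitive cycle base, and the compatibility between semidirect products of left braces and of cycle sets recalled before \cref{th:ort-dynamical-ex} identifies this with the relevant cycle base of $A\ltimes_\alpha H$, completing the forward direction.

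For the converse I would start from an arbitrary constant orthogonal dynamical extension and invoke the converse half of \cref{th:ort-dynamical-ex}, which produces the brace $A=G_X$ with $X$ as transitive cycle base together with a homomorphism $\bar{\alpha}\colon A\to\Sym(S)$ for which the stabilizer of $e$ acts transitively on $S$. To realize this inside a brace semidirect product I would take $H:=\mathbb{Z}^S$ with its trivial brace structure, in which $S$ is a cycle base, and promote $x\mapsto\bar{\alpha}_x^{-1}$ to a homomorphism $\alpha\colon A\to\Aut(H)$ permuting $S$ (each permutation of $S$ extends uniquely to an additive automorphism of $\mathbb{Z}^S$, which is a brace automorphism since $H$ is trivial); the same compatibility guarantees that $\alpha$ is well defined and that $\alpha(a)(S)=S$. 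Applying the forward direction to $A$, $H$, $\alpha$ then recovers the given extension. I expect the main obstacle to be purely the bookkeeping of inverses — keeping $\alpha$ as the action on $H$, its restriction $\hat{\alpha}$ to $S$, and the cocycle $\bar{\alpha}=\hat{\alpha}^{-1}$ distinct — together with the care needed in the cycle base claim, where ``transitive cycle base of $A\ltimes_\alpha H$'' must be read through the sub-brace generated by the single $\lambda$-orbit $X\times S$ rather than as additive generation of the whole product.
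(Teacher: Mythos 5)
Your proposal is correct and follows essentially the same route as the paper: both directions reduce to \cref{th:ort-dynamical-ex}, with the restriction $\alpha^*=\hat{\alpha}$ of $\alpha$ to $S$ in the forward direction and the braces $G_X$ and $\mathbb{Z}^S$ in the converse. The only difference is one of detail: you explicitly compute $\lambda_{(a,h)}(a',h')=(\lambda_a(a'),\alpha_a(h'))$ and verify that $X\times S$ is a single $\lambda$-orbit, steps the paper leaves implicit behind the remark that the additive group of $A\ltimes_{\alpha}H$ is the direct sum.
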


\begin{proof}
Note that the homomorphism $\alpha:A\rightarrow \Aut(H,+)$ induces a homomorphism $\alpha^*:A\rightarrow \Sym(S)$ such that $A_e$ acts transitively on $S$ by $\alpha^*$. Then, by  \cref{th:ort-dynamical-ex}, $X\times_{\bar{\alpha}} S$ is a constant orthogonal dynamical extension. Since 
the additive group of the left brace $A\ltimes_{\alpha} H$ is 
the direct sum of $(A,+)$ and $(H,+)$, it follows that $X\times_{\bar{\alpha}} S$ is a transitive cycle base.\\
For the converse part, given a constant orthogonal dynamical extension $X\times_{\alpha} S$, it is sufficient to consider $X$ as a transitive cycle base of the left brace $G_X$ and $S$ as the cycle base of the trivial left brace over the free abelian group $\mathbb{Z}^{S}$. In this way, by \cref{th:ort-dynamical-ex},
the function $\alpha$ induces an action $\bar{\alpha}$ of $G_X$ on $S$. Again, $\bar{\alpha}$ induces an action $\bar{\bar{\alpha}}$ by automorphisms of $G_X$ on the free abelian group $\mathbb{Z}^{S}$. Finally, the semidirect product of left braces $G_X\ltimes_{\bar{\bar{\alpha}}} \mathbb{Z}^{S}$ has the dynamical extension $X\times_{\alpha} S$ as a transitive cycle base.
\end{proof}
\smallskip

We conclude this section by giving an example of constant dynamical extensions obtained by using \cref{th:teor20}.

\begin{ex}
Let $A$ be the left brace having $(\mathbb{Z}/4\mathbb{Z},+)$ as additive group and with multiplication given by $a\circ b:=a+b+2ab$, for all $a,b\in A$, and let $H$ be the trivial left brace on $(\mathbb{Z}/3\mathbb{Z},+)$. 
Consider $G:=A\ltimes_{\alpha} H$ the semidirect product of the left braces $A$ and $H$ where $\alpha:A\longrightarrow \Aut(H)$ is the homomorphism from $(A,\circ)$ to the automorphism group of the left brace $H$ given by $\alpha(0)=\alpha(3):=\id_H$ and $\alpha(1)=\alpha(2):=(1\;2)$. Then, $X:=\{1,3\}$ is the unique transitive cycle base of $A$ and $S:=\{1,2\}$ is a cycle base of $H$. Moreover, the stabilizer $A_1:=\{0,2\}$ of $1$ is a subgroup of $(A,\circ)$ that acts transitively on $S$. Hence, if $\bar{\alpha}$ is the function defined as in \cref{th:teor20}, it follows that $X\times_{\bar{\alpha}} S$ is a constant orthogonal dynamical extension. In particular, it is isomorphic to the that on $\mathbb{Z}/2\mathbb{Z}\times \mathbb{Z}/2\mathbb{Z}$ given by 
$$
(x,s)\cdot (y,t):=(y+1,t+x),
$$
for all $(x,s),(y,t)\in \mathbb{Z}/2\mathbb{Z}\times \mathbb{Z}/2\mathbb{Z}$.
\end{ex}

\section{Orthogonal dynamical extensions: examples and constructions}

In this section, we specialize \cref{carindec} to provide examples and constructions of indecomposable cycle sets, giving special attention to the ones having abelian permutation group. Finally, we show that, by semidirect products of cycle sets, one can construct examples of orthogonal dynamical extensions that are not constant.
\medskip

Let us recall that, if $X$ is a finite indecomposable cycle set with abelian permutation group and $A$ is a finite abelian group, the constant orthogonal dynamical extensions having permutation group isomorphic to $\mathcal{G}(X)\times A$ simply correspond to the maps $\alpha:X\longrightarrow A$ from $X$ to the translations group of $A$ (which we identify with $A$) such that $\alpha(x\cdot y)+\alpha(x)=\alpha(y\cdot x)+\alpha(y)$, for all $x,y\in X$, and the group 
$$
\mathcal{H}_y:=\{ \epsilon_1 \alpha(x_1)+\cdots+\epsilon_n \alpha(x_n)\ | \  n\in \mathbb{N},\ \epsilon_1,...,\epsilon_n\in \{-1,1\} \quad \sigma^{\epsilon_1}_{x_1}\cdots\sigma^{\epsilon_n}_{x_n}(y)=y\}
$$
is equal to $A$, for some $y\in X$. \\
We underline that the indecomposable cycle sets constructed in this section have not cyclic permutation group, hence they are different from those obtained in \cite{capiru2020,Ru20}.
\smallskip

At first, we introduce the following example that, in particular, coincides with \cite[Example 10]{cacsp2018}.

\begin{ex}
Let $X:=\mathbb{Z}/k\mathbb{Z}$ be the cycle set given by $x\cdot y:=y+1$, for all $x,y\in X$. Moreover, let $S:=\mathbb{Z}/k\mathbb{Z}$ and $\alpha:X\longrightarrow \Sym(S)$ the function given by $\alpha_x:=\mathfrak{t}_x$, for every $x\in X$, where $\mathfrak{t}_x$ is the translation by $x$. Then,
$$
(x,s)\cdot (y,t)=(y+1,\alpha_x(t)),
$$
for all $x,y\in X,$ $s,t\in S$ and $X\times_{\alpha} S$ is an orthogonal dynamical extension of $X$ by $\alpha$ having multipermutation level $2$ and permutation group isomorphic to $\mathbb{Z}/k\mathbb{Z} \times \mathbb{Z}/k\mathbb{Z}$.
\end{ex}



 
 The previous example is an orthogonal dynamical extension $X\times_{\alpha} S$ for which $\Ret(X\times_{\alpha} S)=X$. However, there exist orthogonal dynamical extensions $X\times_{\alpha} S$ for which $\Ret(X\times_{\alpha} S)$ is not isomorphic to $X$, as we can see in the following example.
 
\begin{ex}\label{refret}
Let $X:=\mathbb{Z}/p^2\mathbb{Z}$ be the cycle set given by $x\cdot y:=y+1$, for all $x,y\in X$. Moreover, let $H:=p\mathbb{Z}/p^2\mathbb{Z}$, $S:=X/H$, and $\alpha:X\longrightarrow \Sym(S)$ the function given by $\alpha_x:=\mathfrak{t}_{[x]}$, for every $x\in X$, where $[x]$ is the image of $x$ in $X/H$ under the canonical epimorphism and $\mathfrak{t}_{[x]}$ is the translation by $[x]$. Then,
$$
(x,s)\cdot (y,t)=(y+1,t+[x]),
$$
for all $x,y\in X,$ $s,t\in S$, and $X\times_{\alpha} S$ is a constant orthogonal dynamical extension of $X$ by $\alpha$ having multipermutation level $2$: indeed, $|\Ret(X\times S)|=p$ and $\alpha_{x\cdot y}\alpha_x=\mathfrak{t}_{[x+y+1]} $, which is symmetric with respect to $x$ and $y$. Moreover, $\sigma_{(0,0)}^{p^2-1}\sigma_{(1,0)}\in H_{0}$ and $<\sigma_{(0,0)}^{p^2-1}\sigma_{(1,0)}>$ acts transitively on $\{0\}\times S$. Clearly, the permutation group is isomorphic to $\mathbb{Z}/p^2\mathbb{Z} \times \mathbb{Z}/p\mathbb{Z}$. 
\end{ex}
\medskip

The indecomposable cycle sets given before are of multipermutation level $2$ and with abelian permutation group. This class of cycle sets is studied in detail in \cite{JePiZa20x,capiru2020}. Now, we give further examples of indecomposable cycle sets having greater multipermutation level. 
Such examples are also different from the ones provided in \cite{cacsp2018}: indeed, the multipermutation cycle sets contained in \cite{cacsp2018} have multipermutation level less than $2$.   

\begin{ex}\label{ex:level>2}
Let $X:=\mathbb{Z}/4\mathbb{Z}$ be the cycle set given by $\sigma_0=\sigma_2:=\mathfrak{t}_1$ and $\sigma_1=\sigma_3:=\mathfrak{t}_{-1}$, where $\mathfrak{t}_1$ and $\mathfrak{t}_{-1}$ are the translations by $1$ and $-1$, respectively. Moreover, let $S$ be the group $\mathbb{Z}/2\mathbb{Z}$ and $\alpha:X\longrightarrow \Sym(S)$ given by 
$$
\alpha_0=\alpha_1:=\id_S \qquad \alpha_2=\alpha_3:=\mathfrak{t}_1.
$$
By a long but easy calculation, one can show that $\alpha_{x\cdot y}\alpha_x=\alpha_{y\cdot x}\alpha_y $, for all $x,y\in X$, and that the subgroup $<\sigma^3_{(0,0)} \sigma_{(2,0)}> $ of $H_{0}$ acts transitively on $\{0\}\times S$. Moreover, the function $\theta:X\times S\longrightarrow X$, $(x,s)\mapsto x$ induces an isomorphism from $\Ret(X\times S)$ to $X$. Therefore, $X\times_{\alpha} S$ is a constant orthogonal dynamical extension of $X$ by $S$ of multipermutation level $3$ having permutation group isomorphic to $\mathbb{Z}/4\mathbb{Z}\times \mathbb{Z}/2\mathbb{Z} $. In particular, $X\times_{\alpha} S$ is isomorphic to the cycle set on $\mathbb{Z}/8\mathbb{Z}$ given by
\begin{align*}
    &\sigma_0=\sigma_4:=(0\;1\;2\;3)(4\;5\;6\;7)
    \quad 
    &\sigma_1=\sigma_5:=(0\;3\;2\;1)(4\;7\;6\;5)\\
    &\sigma_2=\sigma_6:=(0\;5\;2\;7)(1\;6\;3\;4) 
    \quad
    &\sigma_3=\sigma_7:=(0\;7\;2\;5)(1\;4\;3\;6).
\end{align*}
\end{ex}

\bigskip

%

In the following proposition, we construct a family of indecomposable cycle sets of multipermutation level $3$ obtained as constant orthogonal dynamical extensions of cycle sets having multipermutation level $2$.

\begin{prop}\label{prop:22}
Let $p$ be an odd prime number and $X:=\mathbb{Z}/p^{2s}\mathbb{Z}$ the indecomposable cycle set of multipermutation level $2$ given by $x\cdot y:=y+1+xp^s$, for all $x,y\in X$. Let $k$ be a number coprime with $p$ and $f$ the function from $X$ into itself given by
$$ f(j):=k(j+\frac{p^s j(j-1)}{2}), $$
for every $j\in X$, where $\frac{p^s j(j-1)}{2}$ is the element $a\in X$ such that $2a=p^s j(j-1)$. Let $\alpha:X\rightarrow \mathbb{Z}/p^{2s}\mathbb{Z}$ be the function given by $\alpha_x:=\mathfrak{t}_{f(x)}$, for all $x\in X$.
Then, $X\times_{\alpha} \mathbb{Z}/p^{2s}\mathbb{Z}$ is a constant  orthogonal dynamical extension of $X$ by $\alpha$  having multipermutation level $3$. Moreover, $\Ret(X\times \mathbb{Z}/p^{2s}\mathbb{Z})\cong X$ and $\mathcal{G}(X\times \mathbb{Z}/p^{2s}\mathbb{Z})\cong \mathbb{Z}/p^{2s}\mathbb{Z}\times \mathbb{Z}/p^{2s}\mathbb{Z}$.
\end{prop}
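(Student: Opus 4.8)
The plan is to verify directly that $\alpha$ defines a constant dynamical cocycle and then apply \cref{carindec} to establish that $X\times_{\alpha}\mathbb{Z}/p^{2s}\mathbb{Z}$ is a constant orthogonal dynamical extension. First I would check the cocycle condition $\alpha_{x\cdot y}\alpha_x=\alpha_{y\cdot x}\alpha_y$. Since each $\alpha_x=\mathfrak{t}_{f(x)}$ is a translation and translations commute, this reduces to the scalar identity $f(x\cdot y)+f(x)=f(y\cdot x)+f(y)$, where $x\cdot y=y+1+xp^s$. I would substitute and expand $f$ using the given formula, exploiting that the term $\tfrac{p^s j(j-1)}{2}$ captures the ``quadratic correction'' and that $p^s\cdot p^s=p^{2s}\equiv 0$ kills all cross terms of order $p^{2s}$. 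The expected outcome is that both sides equal a symmetric expression in $x$ and $y$, so the identity holds.

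Next I would confirm indecomposability via the transitivity criterion of \cref{carindec}, i.e.\ that $H_y$ acts transitively on $\{y\}\times\mathbb{Z}/p^{2s}\mathbb{Z}$ for some $y$. Taking $y=0$, the natural candidate stabilizing words are built from $\sigma_{(0,0)}$ and $\sigma_{(1,0)}$ (as in \cref{refret} and \cref{ex:level>2}): I would identify a product $\sigma_{(0,0)}^{a}\sigma_{(1,0)}^{b}$ (or a short word of this type) whose $X$-component fixes $0$, and then compute its $S$-component, which is a sum $\sum\epsilon_i f(x_i)$. The goal is to show this sum generates all of $\mathbb{Z}/p^{2s}\mathbb{Z}$, for which the coprimality of $k$ with $p$ is exactly what is needed: since $f(1)=k$ and $\gcd(k,p)=1$, a generator of the additive group should be reachable. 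I would then read off that $\{X\times\{s\}\}_{s\in S}$ is a block system orthogonal to $\mathcal{B}_X$, which is automatic from the shape of the operation.

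To compute the multipermutation level I would argue in two stages. The map $(x,s)\mapsto x$ induces a surjective cycle set homomorphism onto $X$, and \cref{refinement} guarantees $\mathcal{B}_X$ refines $\mathcal{B}_{\Ret(X\times\mathbb{Z}/p^{2s}\mathbb{Z})}$; I would check that $\sigma_{(x,s)}=\sigma_{(x,s')}$ holds precisely when $x=x'$ (i.e.\ the second coordinate does not refine the retraction), so that $\Ret(X\times\mathbb{Z}/p^{2s}\mathbb{Z})\cong X$. Since $X$ itself has multipermutation level $2$ by hypothesis, it follows that $X\times_{\alpha}\mathbb{Z}/p^{2s}\mathbb{Z}$ has level $3$. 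The permutation group identification $\mathcal{G}\cong\mathbb{Z}/p^{2s}\mathbb{Z}\times\mathbb{Z}/p^{2s}\mathbb{Z}$ would come from observing that the first factor is $\mathcal{G}(X)\cong\mathbb{Z}/p^{2s}\mathbb{Z}$ and the orthogonal block structure contributes the translation group $\langle\mathfrak{t}_{f(x)}\rangle=\mathbb{Z}/p^{2s}\mathbb{Z}$ as a complementary direct factor, using abelianness and the transitivity already established.

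The main obstacle I anticipate is the cocycle verification: the formula for $f$ involves the ``division by $2$'' element $\tfrac{p^s j(j-1)}{2}$, which is only well-defined because $p$ is odd (so $2$ is invertible modulo $p^{2s}$), and the algebra must carefully track which products of $p^s$ survive modulo $p^{2s}$. The delicate point is that $f$ is \emph{not} additive; it is a ``twisted'' quadratic map, and the symmetry of $f(x\cdot y)+f(x)$ in $x,y$ is exactly the nonobvious cancellation that makes the whole construction work. I would therefore treat this computation as the technical heart of the proof and carry it out explicitly, deferring the level and permutation-group statements as comparatively routine consequences of \cref{carindec}, \cref{refinement}, and the structure of the block system.
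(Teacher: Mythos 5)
Your proposal is correct and follows essentially the same route as the paper's proof: a direct verification that $f(x\cdot y)+f(x)$ is symmetric in $x$ and $y$ (the paper's computation yields $k\bigl(j+i+1+\frac{p^s(j+1)j}{2}+\frac{p^s(i+1)i}{2}\bigr)$), transitivity of $H_0$ via the word $\sigma_{(0,0)}^{p^{2s}-p^s-1}\sigma_{(1,0)}$ whose $S$-component is translation by $f(1)=k$ with $\gcd(k,p)=1$, and the projection $(x,s)\mapsto x$ together with the identity $f(j+ap^s)=f(j)+kap^s$ to get $\Ret(X\times_{\alpha}\mathbb{Z}/p^{2s}\mathbb{Z})\cong X$ and hence multipermutation level $3$. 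No substantive differences.
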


\begin{proof}
To show that $X\times_{\alpha} \mathbb{Z}/p^{2s}\mathbb{Z}$ is a cycle set, it is sufficient to see that $f(i\cdot j)+f(i)=f(j\cdot i)+f(j)$, for all $i,j\in X$. Let $i,j$ be elements of $X$. Then,
\begin{eqnarray}
& & f(i\cdot j)+f(i) = f(j+1+p^si)+f(i) \nonumber \\
&=& k(j+1+p^si+\frac{p^s(j+1+p^si)(j+1+p^si-1)}{2})+k(i+\frac{p^si(i-1)}{2}) \nonumber \\
&=& k(j+1+p^si+\frac{p^s(j+1)j}{2})+k(i+\frac{p^si(i-1)}{2}) \nonumber \\
&=& k(j+i+1+\frac{p^s(j+1)j}{2}+p^s\frac{(i+1)i}{2}) \nonumber
\end{eqnarray}
and since the expression is symmetric with respect to $i$ and $j$ it follows that $f(i\cdot j)+f(i) = f(j\cdot i)+f(j)$, hence $X\times_{\alpha} \mathbb{Z}/p^{2s}\mathbb{Z}$ is a cycle set. Moreover, the subgroup $<\sigma^{p^{2s}-p^s-1}_{(0,0)}\sigma_{(1,0)} >$ of $H_0$ acts transitively on $\{0\}\times \mathbb{Z}/p^{2s}\mathbb{Z}$, hence, by \cref{carindec} the cycle set $X\times_{\alpha} \mathbb{Z}/p^{2s}\mathbb{Z}$ is indecomposable and $\mathcal{G}(X\times_{\alpha} \mathbb{Z}/p^{2s}\mathbb{Z})$ is isomorphic to $\mathbb{Z}/p^{2s}\mathbb{Z}\times \mathbb{Z}/p^{2s}\mathbb{Z}$.\\
Finally, let $\rho:X\times \mathbb{Z}/p^{2s}\mathbb{Z} \longrightarrow X $ be the function given by $\rho(x,s):=x$, for every $(x,s)\in X\times \mathbb{Z}/p^{2s}\mathbb{Z}$. Clearly, $\rho$ is an epimorphism of cycle sets. Moreover, since $f(j+ap^s)=f(j)+kap^s$, for 
all $j,a\in \mathbb{Z}/p^{2s}\mathbb{Z}$, it follows that $\sigma_{(x,s)}=\sigma_{(y,t)}$ if and only if $\rho(x)=\rho(y)$, therefore $\Ret(X \times_{\alpha} \mathbb{Z}/p^{2s})\cong X$ and $X \times \mathbb{Z}/p^{2s}$ has multipermutation level $3$.
\end{proof}

\medskip


The following proposition shows that other examples of indecomposable cycle sets that are constant dynamical extensions can be easily obtained by those having cyclic permutation group and arbitrary multipermutation level.

\begin{prop}\label{prop:23}
Let $n,k\in \mathbb{N}$, $X:=\{0,\ldots,p^k-1\}$ be an indecomposable cycle set of multpermutation level $n$ and with cyclic permutation group constructed as in \cref{costruz2}. Moreover, let $j\in\mathbb{N}$ such that $|\Ret^{n-1}(X)|=p^j$ and $s$ a natural number in $\{1,\ldots,j\}$. Let $\alpha:X\rightarrow \Sym(\mathbb{Z}/p^s\mathbb{Z})$ be the function given by $\alpha_x:=\mathfrak{t}_{[x]}$, where $[x]$ is the class of $x$ module $p^s$.
Then, $X\times_{\alpha} \mathbb{Z}/p^s\mathbb{Z}$ is a constant orthogonal dynamical extension of $X$ such that $\mathcal{G}(X\times_{\alpha} \mathbb{Z}/p^s\mathbb{Z})\cong \mathbb{Z}/p^k\mathbb{Z}\times \mathbb{Z}/p^s\mathbb{Z} $ and $\Ret(X\times_{\alpha} S)\cong \Ret(X)$.
\end{prop}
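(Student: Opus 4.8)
The plan is to verify the three required properties---that $X\times_{\alpha}\mathbb{Z}/p^s\mathbb{Z}$ is a cycle set, that it is an indecomposable constant orthogonal dynamical extension with the prescribed permutation group, and that its retraction is isomorphic to $\Ret(X)$---by direct appeal to \cref{carindec}, exactly as in \cref{prop:22}. First I would check the cocycle condition $\alpha_{x\cdot y}\alpha_x=\alpha_{y\cdot x}\alpha_y$. Since $\alpha_x=\mathfrak{t}_{[x]}$ and translations commute and add exponents, this reduces to the scalar identity $[x\cdot y]+[x]=[y\cdot x]+[y]$ in $\mathbb{Z}/p^s\mathbb{Z}$. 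Because $X$ is a cycle set, the map $x\mapsto\sigma_x$ induces a coboundary-type symmetry, and in fact $x\cdot y$ and $y\cdot x$ should differ in a way that makes $[x\cdot y]+[x]$ symmetric in $x,y$ modulo $p^s$; the key input is that the left multiplications $\sigma_x$ are powers of $\psi=(0\,\cdots\,p^k-1)$ as in \cref{costruz2}, so $x\cdot y=y+\varphi_1(\text{something})$ reduces nicely modulo $p^s$ since $s\le j\le j_1$ forces the higher correction terms $p^{j_i}f_i$ to vanish mod $p^s$. This is the step I expect to be the main obstacle: one must show that $[x\cdot y]$ depends on $y$ and on the residue data in a manner symmetric enough, using $s\le j$ to discard the higher $p$-adic corrections.

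Once the cocycle condition holds, the operation $(x,s)\cdot(y,t):=(x\cdot y,t+[x])$ defines a dynamical extension by \cref{carindec}. Next I would establish indecomposability by exhibiting an explicit element of $H_0$ generating a cyclic group acting transitively on $\{0\}\times\mathbb{Z}/p^s\mathbb{Z}$; following the pattern of \cref{prop:22}, a product of the form $\sigma_{(0,0)}^{m}\sigma_{(1,0)}$ with a suitable exponent $m$ (chosen so that the first coordinate fixes $0$, i.e.\ $\psi^{m}\psi(0)=0$) will shift the second coordinate by a unit $[1]=1$, which generates $\mathbb{Z}/p^s\mathbb{Z}$. Transitivity of the generated subgroup then follows, and by \cref{carindec} the extension is an indecomposable constant orthogonal dynamical extension with $\{X\times\{s\}\}_{s}$ orthogonal to $\mathcal{B}_X$.

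For the permutation group, I would argue that $\mathcal{G}(X\times_{\alpha}\mathbb{Z}/p^s\mathbb{Z})$ embeds in $\mathcal{G}(X)\times\langle\mathfrak{t}_1\rangle\cong\mathbb{Z}/p^k\mathbb{Z}\times\mathbb{Z}/p^s\mathbb{Z}$, since each $\sigma_{(x,t)}$ acts as $(\sigma_x,\mathfrak{t}_{[x]})$; surjectivity follows from the transitivity element above together with the fact that $\mathcal{G}(X)=\langle\psi\rangle$ is already attained in the first coordinate. Finally, for the retraction I would show $\sigma_{(x,s)}=\sigma_{(y,t)}$ iff $\sigma_x=\sigma_y$ and $[x]=[y]$, but since $s\le j$ and $\Ret^{n-1}(X)$ has size $p^j$, the condition $[x]=[y]$ modulo $p^s$ is implied by $\sigma_x=\sigma_y$ (equality in $\Ret(X)$), so the map $(x,s)\mapsto\sigma_x$ identifies $\Ret(X\times_{\alpha}\mathbb{Z}/p^s\mathbb{Z})$ with $\Ret(X)$. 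The projection $\rho(x,s):=x$ descends to the claimed isomorphism $\Ret(X\times_{\alpha}S)\cong\Ret(X)$, completing the proof.
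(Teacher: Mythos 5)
Your proposal is correct and takes essentially the same route as the paper: the paper dispatches the cycle-set/indecomposability/permutation-group claims with ``similarly to \cref{prop:22}'' (i.e.\ the cocycle identity $[x\cdot y]+[x]=[y]+[x]+1$ via $p^{j_i}\equiv 0 \pmod{p^s}$, a transitive element of the form $\sigma_{(0,0)}^{m}\sigma_{(1,0)}\in H_0$, and \cref{carindec}), and then proves $\Ret(X\times_{\alpha}S)\cong\Ret(X)$ exactly as you do, from $\sigma_x=\sigma_y\Rightarrow |\Ret(X)|\mid y-x\Rightarrow [x]=[y]$. Your write-up in fact fills in more of the first part than the paper does, with no gaps.
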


\begin{proof}
Similarly to \cref{prop:22}, one can show that $X\times_{\alpha} \mathbb{Z}/p^s\mathbb{Z}$ is an indecomposable cycle set such that $\mathcal{G}(X\times_{\alpha} \mathbb{Z}/p^s\mathbb{Z})\cong \mathbb{Z}/p^k\mathbb{Z}\times \mathbb{Z}/p^s\mathbb{Z} $. Now, we show that $\Ret(X\times_{\alpha} \mathbb{Z}/p^s\mathbb{Z})$ is isomorphic to $\Ret(X)$. Let $\rho:X\times S\longrightarrow \Ret(X)$ be the function given by $\rho(x,s):=\sigma_x$, for all $x\in X,s\in S$. Clearly, $\rho$ is an epimorphism of cycle sets. Moreover,
$$\sigma_x=\sigma_y\Rightarrow |\Ret(X)| \mid y-x\Rightarrow [x]=[y], $$
for all $x,y\in X$, therefore the equivalence relation induced by $\rho$ coincides with the retract relation of $X\times \mathbb{Z}/p^s\mathbb{Z}$, hence the proof is complete.
\end{proof}
\smallskip

\begin{ex}
Let $X:=\mathbb{Z}/32\mathbb{Z}$ be the cycle set of multipermutation level $3$ and cyclic permutation group given by 
$$\sigma_x:= \begin{cases} 
\mathfrak{t}_1 &\mbox{if } x \equiv 0\;(mod\; 8) \\ 
\mathfrak{t}_1^5 & \mbox{if } x \equiv 1\;(mod\; 8)\\ 
\mathfrak{t}_1^{25} & \mbox{if } x \equiv 2\;(mod\; 8)\\ 
\mathfrak{t}_1^{29} & \mbox{if } x \equiv 3\;(mod\; 8)\\ 
\mathfrak{t}_1^{17} & \mbox{if } x \equiv 4\;(mod\; 8)\\ 
\mathfrak{t}_1^{21} & \mbox{if } x \equiv 5\;(mod\; 8)\\ 
\mathfrak{t}_1^9 & \mbox{if } x \equiv 6\;(mod\; 8)\\ 
\mathfrak{t}_1^{13} & \mbox{if } x \equiv 7\;(mod\; 8)\end{cases} $$
and $\alpha:X\longrightarrow \mathbb{Z}/2\mathbb{Z}$ the function given by $\alpha_x:=\mathfrak{t}_{[x]}$, where $[x]$ is the class of $x$ module $2$. By \cref{prop:23}, $X\times_{\alpha} \mathbb{Z}/2\mathbb{Z}$ is a constant orthogonal dynamical extension of $X$ by $\alpha$ having multipermutation level $3$ and permutation group isomorphic to $\mathbb{Z}/32\mathbb{Z}\times \mathbb{Z}/2\mathbb{Z}$.
\end{ex}

\medskip


The following example ensures that the class of constant orthogonal dynamical extensions is not contained in that of cycle sets with abelian permutation group. 
Furthermore, this example of cycle set is different from the previous ones since it is indecomposable and has not finite multipermutation level.

\begin{ex}
Let $X:=\{0,1,2,3\}$ be the irretractable indecomposable cycle set given by $\sigma_0:=(1,2)$, $\sigma_1:=(0,3)$, $\sigma_2:=(0,2,3,1)$, and $\sigma_3:=(0,1,3,2)$. Let $S:=\mathbb{Z}/4\mathbb{Z}$ and $\alpha:X\longrightarrow S$ the function given by $\alpha_0=\alpha_1:=\mathfrak{t}_1$ and $\alpha_2=\alpha_3:=\mathfrak{t}_3$. Then, by \cref{carindec}, the cycle set $X\times_{\alpha} S$ is an indecomposable cycle set of size $16$: indeed, $\alpha_{x\cdot y}\alpha_x$ is symmetric with respect to $x$ and $y$, $X$ is indecomposable and the subgroup of $H_0$ generated by $\sigma_{(0,0)}$ acts transitively on $\{0\}\times S$. Moreover, since $\Ret(X\times_{\alpha} S)$ is isomorphic to $X$, $X\times_{\alpha} S$ has not finite multipermutation level.  Since $\mathcal{G}(X)$ is not abelian, $\mathcal{G}(X\times_{\alpha} S) $ is not.
\end{ex}

Providing examples of orthogonal dynamical extensions that are not constant is rather difficult. However, the semidirect product of cycle sets belonging to the class of quasigroups allows for obtaining a family of examples of orthogonal dynamical extensions. We recall that a \emph{quasigroup} is an algebraic structure for which left and right multiplications are bijective, see \cite{bon2019}.

\begin{prop}\label{prop:semidirect-quasi}
Let $X,S$ be finite cycle sets and $\alpha:X\rightarrow \Aut(S)$ a function from $X$ to $\Aut(S)$ such that $\alpha_{x\cdot y}\alpha_x=\alpha_{y\cdot x}\alpha_y$, for all $x,y\in X$. Then the semidirect product of $X$ and $S$ via $\alpha$ is a quasigroup if and only if $X$ and $S$ are quasigroups. Moreover, if $X\ltimes_{\alpha} S$ is a quasigroup, then it is an orthogonal dynamical extension.
\end{prop}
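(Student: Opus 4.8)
The plan is to work directly with the semidirect-product operation recorded in \cref{rema}, namely $(x,s)\cdot(y,t)=(x\cdot y,\alpha_x(s\cdot t))$. Since in any cycle set the left multiplications are bijective by definition, $X\ltimes_\alpha S$ is a quasigroup if and only if every right multiplication $T_{(y,t)}\colon(x,s)\mapsto(x\cdot y,\alpha_x(s\cdot t))$ is bijective, and the same reduction applies to $X$ and $S$ separately. First I would write out $T_{(y,t)}$ as above and note that, all sets being finite, bijectivity is equivalent to injectivity; I would denote the right multiplications in $X$ and $S$ by $x\mapsto x\cdot y$ and $s\mapsto s\cdot t$.

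For the implication that $X$ and $S$ being quasigroups forces $X\ltimes_\alpha S$ to be one, I would argue componentwise: if $T_{(y,t)}(x_1,s_1)=T_{(y,t)}(x_2,s_2)$, the first coordinates give $x_1\cdot y=x_2\cdot y$, whence $x_1=x_2=:x$ because right multiplication by $y$ is injective in $X$; the second coordinates then read $\alpha_x(s_1\cdot t)=\alpha_x(s_2\cdot t)$, and since $\alpha_x$ is an automorphism and right multiplication by $t$ is injective in $S$, we conclude $s_1=s_2$. For the converse, assume $X\ltimes_\alpha S$ is a quasigroup. Projecting onto the first factor via $\pi_X\colon X\times S\to X$ yields the commuting relation $\pi_X\circ T_{(y,t)}=\tau\circ\pi_X$, where $\tau\colon x\mapsto x\cdot y$; as $T_{(y,t)}$ is bijective and $\pi_X$ surjective, $\tau$ is surjective, hence bijective, so $X$ is a quasigroup. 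For $S$, I would observe that $T_{(y,t)}$ carries the block $\{x\}\times S$ injectively into $\{x\cdot y\}\times S$; these have equal cardinality, so the restriction $s\mapsto\alpha_x(s\cdot t)$ is bijective on $S$, and cancelling the automorphism $\alpha_x$ shows right multiplication by $t$ is bijective in $S$.

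For the \emph{Moreover}, three things are needed: that $X$ and $X\ltimes_\alpha S$ are indecomposable, and that $\mathcal{G}(X\ltimes_\alpha S)$ admits a system of blocks orthogonal to $\mathcal{B}_X$. The orthogonal system is immediate, since a direct computation gives $\sigma_{(x,s)}(X\times\{s'\})=X\times\{\alpha_x(s\cdot s')\}$, so that $\{X\times\{s\}\}_{s\in S}$ is a $\mathcal{G}(X\ltimes_\alpha S)$-invariant partition meeting each block $\{x\}\times S$ of $\mathcal{B}_X$ in exactly one point. The heart of the argument, and the one step requiring a genuine idea, is the lemma that a finite cycle set $Q$ which is a quasigroup is indecomposable. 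To prove it, let $\mathcal{O}$ be a $\mathcal{G}(Q)$-orbit; each $\sigma_x$ lies in $\mathcal{G}(Q)$ and therefore preserves $\mathcal{O}$, so $x\cdot a\in\mathcal{O}$ for every $x\in Q$ and every $a\in\mathcal{O}$. Fixing $a\in\mathcal{O}$, the set $\{x\cdot a\mid x\in Q\}$ is thus contained in $\mathcal{O}$; but right multiplication by $a$ is a bijection of $Q$, so this set equals $Q$, forcing $\mathcal{O}=Q$ and hence transitivity of $\mathcal{G}(Q)$.

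Applying this lemma to $X$ (a quasigroup by the equivalence just proved) and to $X\ltimes_\alpha S$ supplies the required indecomposability, and combining it with \cref{rema}, which identifies $X\ltimes_\alpha S$ as a dynamical extension of $X$ with cocycle $\alpha_{(x,y)}(s,t)=\alpha_x(s\cdot t)$, shows it is an orthogonal dynamical extension. I expect no real obstacle beyond the indecomposability lemma: the equivalence and the exhibition of the orthogonal blocks are routine once the right multiplication is written out. One should merely keep in mind the convention $|X|,|S|>1$ implicit in the notion of orthogonality and the non-degeneracy guaranteed by \cref{rema}, neither of which causes difficulty.
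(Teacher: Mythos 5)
Your proposal is correct and follows essentially the same route as the paper, which simply asserts the equivalence as ``a routine computation,'' states that a cycle set which is a quasigroup is ``clearly'' indecomposable, and then invokes \cref{rema}. Your componentwise analysis of the right multiplications, the orbit argument ($\mathcal{O}\supseteq\{x\cdot a\mid x\in Q\}=Q$ by bijectivity of right multiplication) for indecomposability, and the explicit exhibition of the blocks $\{X\times\{s\}\}_{s\in S}$ are exactly the details the paper elides.
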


\begin{proof}
It is a routine computation to check that $X\ltimes_{\alpha} S$ is a quasigroup if and only if $X$ and $S$ are quasigroups. Clearly, every cycle set that also is a quasigroup is indecomposable. Therefore, if $X\ltimes_{\alpha} S$ is a quasigroup, by \cref{rema}, it is an orthogonal dynamical extension.
\end{proof}

\smallskip

\begin{rems}
\hspace{1mm}
\begin{enumerate}
    \item To construct examples of orthogonal dynamical extensions using \cref{prop:semidirect-quasi}, one has to provide cycle sets that also are quasigroups. Even if these cycle sets are not easy to compute, several instances have been provided by Bonatto, Kinyon, Stanovsk\'{y}, and Vojt\v{e}chovsk\'{y} in \cite{bon2019}. For an exhaustive study of these structures, we refer the reader to the same paper \cite{bon2019}.
    \item The indecomposable cycle sets given in \cite{capiru2020,Ru20,cacsp2018, JePiZa20x} are not quasigroups: in this sense, \cref{prop:semidirect-quasi} provides different examples of indecomposable cycle sets.
    \item \cref{prop:semidirect-quasi} allows for determining ``genuine'' examples of orthogonal dynamical extension. Indeed, if $X\ltimes_{\alpha} S$ is a quasigroup, then it clearly is an irretractable cycle set, hence it can not be constructed by a constant orthogonal dynamical extension. 
\end{enumerate}
\end{rems}

\smallskip

We close this section by showing a concrete example of orthogonal dynamical extension which we construct using \cref{prop:semidirect-quasi}.

\begin{ex}
Let $X:=\{0,1,2,3\}$ be the indecomposable cycle set given by $\sigma_0:=(1,2)$, $\sigma_1:=(0,3)$, $\sigma_2:=(0,2,3,1)$, and $\sigma_3:=(0,1,3,2)$. Moreover, let $(S,\cdot)$ be a copy of $X$ and $\alpha:X\rightarrow \Aut(S)$ the function given by $\alpha_0=\alpha_1:=(0,1)(2,3)$ and $\alpha_2=\alpha_3:=\id_S$. By a long but easy calculation, it follows that $X$ and $S$ are quasigroups and $\alpha_{x\cdot y}\alpha_x=\alpha_{y\cdot x}\alpha_y$, for all $x,y\in X$. 
Therefore, $X\ltimes_{\alpha} S$ is an orthogonal dynamical extension.
\end{ex}

\section{One-generator left braces of multipermutation level 2}

In this section, which is self-contained with respect to the previous ones, we study the class of one-generator left braces, which is closely related to indecomposable cycle sets (see \cite{smock,rump2020one} for more details). Specifically, we investigate one-generator left braces of multipermutation level at most $2$ focusing our attention on the properties related to the map $\lambda$. In this way, we will prove \cite[Theorem 2]{rump2020one} by a different proof. At this purpose, let us recall that a left brace $A$ is said to be a \textit{one-generator left brace} if there exists $x\in A$ such that $A = A(x)$, where $A(x)$ is the smallest left sub-brace of $A$ containing $x$.

\medskip 

At first, for any subset $S$ of a left brace $A$, let us put
\begin{align*}
    - S:= \{-s \ | \ s\in S\}
    \qquad\text{and}\qquad
    S^{-}:= \{s^- \ | \ s\in S\}.
\end{align*}

\begin{lemma}
Let $A$ be a left brace and $x\in A$. Define inductively 
$A_0:=\{x,\,-x,x^-, 0\}$
and
$$
A_i = (A_{i-1} + A_{i-1})\cup (A_{i-1} - A_{i-1})\cup (A_{i-1}\circ A_{i-1}) \cup (A_{i-1}\circ A_{i-1}^-)  \cup (A_{i-1}^-\circ A_{i-1}),
$$
for every $i\in \mathbb{N}$.
Then, $\bigcup\limits_{n\in \mathbb{N}_0} A_n = A(x)$.
\end{lemma}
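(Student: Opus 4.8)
The plan is to prove the equality by the standard two-inclusion argument for the substructure generated by a single element, with everything resting on the observation that the chain $(A_n)_{n\in\mathbb{N}_0}$ is increasing. Throughout I will use the elementary fact that in a left brace the additive identity $0$ is also the multiplicative identity: putting the multiplicative identity $e$ in place of $a$ in the brace relation $a\circ(b+c)+a=a\circ b+a\circ c$ gives $(b+c)+e=b+c$, so $e=0$.

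First I would prove $\bigcup_{n} A_n \subseteq A(x)$ by induction on $n$. Since $A(x)$ is a left sub-brace containing $x$, it is an additive subgroup and a multiplicative subgroup of $A$, so it contains $-x$, $x^-$, and $0$; hence $A_0\subseteq A(x)$. Assuming $A_{n-1}\subseteq A(x)$, closure of $A(x)$ under $^-$ gives $A_{n-1}^-\subseteq A(x)$, and each of the five sets whose union forms $A_n$ is obtained by applying $+$, $-$, or $\circ$ to elements of $A(x)$; as $A(x)$ is closed under all of these, $A_n\subseteq A(x)$.

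For the reverse inclusion it suffices to show that $B:=\bigcup_n A_n$ is itself a left sub-brace containing $x$, since minimality of $A(x)$ then forces $A(x)\subseteq B$. Clearly $x\in A_0\subseteq B$. The decisive preliminary step is monotonicity of the chain, that is $A_{n-1}\subseteq A_n$. A trivial induction shows $0\in A_n$ for every $n$ (it lies in $A_0$, and $0=0+0$ propagates it upward), and then for any $a\in A_{n-1}$ one has $a=a+0\in A_{n-1}+A_{n-1}\subseteq A_n$. Monotonicity means that any finitely many elements of $B$ already lie in a common level $A_N$, so for $a,b\in B$ I may choose $N$ with $a,b\in A_N$ and conclude $a+b$, $a\circ b$, and $a\circ b^-$ all lie in $A_{N+1}\subseteq B$.

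The hard part, and the step I would treat most carefully, is closure under the two inverse operations; this is where the identification $0=e$ pays off. Using that $0\in A_N$ is the common identity, I get $-a=0-a\in A_N-A_N\subseteq A_{N+1}$ and $a^-=a^-\circ 0\in A_N^-\circ A_N\subseteq A_{N+1}$, so $B$ is closed under additive and multiplicative inversion (and, applying the $\circ$-closure above to inverses, under all products). Hence $B$ is simultaneously an additive subgroup and a multiplicative subgroup of $A$; the brace relation holds on $B$ because it holds in $A$, so $B$ is a left sub-brace. This gives $A(x)\subseteq B$ and completes the proof. I expect no genuine obstacle beyond bookkeeping: the only subtlety is ensuring $0$ sits in every level and recognizing it as the multiplicative identity, which is exactly what delivers inverse-closure for free.
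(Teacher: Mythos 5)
Your proof is correct and follows essentially the same route as the paper's: show the union is closed under $+$ and $\circ$ via monotonicity of the chain, obtain inverses by combining $0$ with the sets $A_{i-1}-A_{i-1}$ and $A_{i-1}^{-}\circ A_{i-1}$ (the paper uses $a^{-}=0\circ a^{-}$ and $-a=0-a$, which is the same device), and conclude by minimality of $A(x)$ through induction on $i$. Your write-up is merely more explicit than the paper's about why $0$ is the multiplicative identity and why $A_{k}\subseteq A_{k+1}$, both of which the paper leaves implicit.
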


\begin{proof}
Let $U:=\bigcup\limits_{n\in \mathbb{N}_0} A_n$. At first, $U$ is closed under the operations $+$ and $\circ$. 
Indeed, if $a\in A_s$ and $b\in A_t$, since $A_k\subseteq A_{k+j}$, for all $k,j\in \mathbb{N}$, $a+b,a\circ b\in A_{s+t+1}$. Moreover, $0\in U$, hence if $a\in A_s$ we have that $a^-=0\circ a^-\in A_{s+1}$ and, similarly, $-a=0-a\in A_{s+1}$. 
Therefore, $U$ is a left brace containing $x$.\\
Now, if $B$ is a left sub-brace of $A$ containing $x$, by induction on $i$, it is easy to show that $A_i\subseteq B$, hence $U\subseteq B$. Therefore, $U$ is equal to $A(x)$. 
\end{proof}



\medskip

The following result is originally contained in \cite[Theorem 9.2]{JePiZa}. 
In particular, it shows that the map $\lambda$ satisfies an additional property with respect to \cref{action} if the left brace has multipermutation level at most $2$.
\begin{lemma}\label{add}
Let $A$ be a left brace. Then the map $\lambda:A\longrightarrow \Aut(A,+)$ is a homomorphism from $(A,+)$ into $\Aut(A,+)$ if and only if $A$ is a trivial left brace or $A$ has multipermutation level $2$.
\end{lemma}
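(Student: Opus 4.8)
The plan is to reformulate everything in terms of the two faces of the identity $a\circ b = a + \lambda_a(b)$. Since the group operation on $\Aut(A,+)$ is composition, the hypothesis that $\lambda$ be a homomorphism from $(A,+)$ reads $\lambda_{a+b} = \lambda_a\lambda_b$ for all $a,b\in A$, whereas \cref{action} already guarantees the \emph{multiplicative} identity $\lambda_{a\circ b} = \lambda_a\lambda_b$. I would first record two elementary reductions: that $a+b-a\circ b = b-\lambda_a(b)$, so that the socle condition $A=\Soc_2(A)$ is equivalent to $\lambda_a(b)-b\in\Soc(A)$ for all $a,b$; and that the disjunction ``$A$ trivial or of level $2$'' is exactly ``$A=\Soc_2(A)$'' (i.e.\ $A$ has multipermutation level at most $2$), since a trivial brace is precisely one with $A=\Soc(A)=\Soc_1(A)$.

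For the implication ``$\lambda$ additive $\Rightarrow$ level $\le 2$'', the key step is to expand $\lambda_{a\circ b}$ in two ways. Multiplicatively it equals $\lambda_a\lambda_b$; on the other hand, applying the assumed additivity to $a\circ b = a+\lambda_a(b)$ gives $\lambda_{a\circ b} = \lambda_a\lambda_{\lambda_a(b)}$. Cancelling $\lambda_a$ yields the crucial relation $\lambda_{\lambda_a(b)} = \lambda_b$. Additivity also forces $\lambda_{-b} = \lambda_b^{-1}$ (from $\lambda_0 = \id$), whence $\lambda_{\lambda_a(b)-b} = \lambda_{\lambda_a(b)}\lambda_b^{-1} = \id$, that is, $\lambda_a(b)-b\in\Soc(A)$. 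By the reduction above this says $A=\Soc_2(A)$, so $A$ is trivial or of level $2$.

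For the converse I would isolate a small but decisive fact: if $s\in\Soc(A)$ then $c+s = s\circ c$ for every $c$ (indeed $s\circ c = s+\lambda_s(c) = s+c$, using $\lambda_s=\id$ and commutativity of $+$), and therefore $\lambda_{c+s} = \lambda_{s\circ c} = \lambda_s\lambda_c = \lambda_c$. Assuming $A=\Soc_2(A)$, the element $s:=\lambda_a(b)-b$ lies in $\Soc(A)$, so $a\circ b = a+\lambda_a(b) = (a+b)+s$; applying the fact with $c=a+b$ gives $\lambda_{a+b} = \lambda_{a\circ b} = \lambda_a\lambda_b$, which is the desired additivity. The trivial case is subsumed here, since then $\lambda$ is constantly $\id$.

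I expect the main obstacle to be conceptual rather than computational: recognising that both directions hinge on passing between the additive translate $c+s$ and the multiplicative product $s\circ c$ when $s$ lies in the socle, together with the derived identity $\lambda_{\lambda_a(b)}=\lambda_b$. Once these are in hand the argument is purely formal; the only care needed is the bookkeeping between the two group operations and the verification that ``trivial or level $2$'' collapses to $A=\Soc_2(A)$.
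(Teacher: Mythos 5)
Your proof is correct. Note that the paper does not actually prove \cref{add}: it imports the statement from Theorem~9.2 of Jedli\v{c}ka--Pilitowska--Zamojska-Dzienio, so there is no internal argument to compare against, and your self-contained derivation supplies exactly what the paper omits. The two pivots you identify are the right ones and both check out: expanding $\lambda_{a\circ b}$ once multiplicatively (via \cref{action}) and once additively through $a\circ b=a+\lambda_a(b)$ cancels to $\lambda_{\lambda_a(b)}=\lambda_b$, which combined with $\lambda_{-b}=\lambda_b^{-1}$ gives $\lambda_a(b)-b\in\Soc(A)$; conversely, since $\Soc(A)$ is an additive subgroup and $s\circ c=s+c$ for $s\in\Soc(A)$, adding a socle element does not change $\lambda$, so $\lambda_{a+b}=\lambda_{(a+b)+s}=\lambda_{a\circ b}=\lambda_a\lambda_b$. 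Your preliminary reduction is also sound: $a+b-a\circ b=b-\lambda_a(b)$, so $A=\Soc_2(A)$ is precisely the condition $\lambda_a(b)-b\in\Soc(A)$ for all $a,b$, and ``trivial or multipermutation level $2$'' is exactly ``level at most $2$'', i.e.\ $A=\Soc_2(A)$, since the trivial braces are those with $A=\Soc_1(A)\subseteq\Soc_2(A)$. No gaps.
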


Following \cite{baneya}, a left brace $A$ is said to be $\lambda$-\textit{cyclic} if $\lambda(A)$ is a cyclic group. In the following proposition, we show that one-generator left braces of multipermutation level $2$ are always $\lambda$-cyclic.

\begin{prop}\label{act}
Let $A$ be a one-generator left brace of multipermutation level $2$, $x\in A$, and suppose that $A=A(x)$. Then, the subgroup generated by $\lambda(A)$ is equal to the one generated by $\lambda_x$.
\end{prop}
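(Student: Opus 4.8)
The plan is to establish the equality by a double inclusion, the nontrivial one being $\lambda(A)\subseteq \langle\lambda_x\rangle$. Since $\lambda$ is a homomorphism from $(A,\circ)$ by \cref{action}, the image $\lambda(A)$ is already a subgroup of $\Aut(A,+)$, so ``the subgroup generated by $\lambda(A)$'' is just $\lambda(A)$ itself; and as $x\in A$ the inclusion $\langle\lambda_x\rangle\subseteq \lambda(A)$ is immediate. Everything therefore reduces to proving that $\lambda_a\in\langle\lambda_x\rangle$ for every $a\in A$.

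The decisive extra ingredient is \cref{add}: because $A$ has multipermutation level $2$, the map $\lambda$ is simultaneously a homomorphism from $(A,+)$ and, by \cref{action}, from $(A,\circ)$ into $\Aut(A,+)$. Recording the consequences for the distinguished element, I would first note $\lambda_0=\id_A$, and then, applying the two homomorphism properties to $x+(-x)=0$ and $x\circ x^-=0$,
\begin{align*}
\lambda_{-x}=\lambda_x^{-1} \qquad\text{and}\qquad \lambda_{x^-}=\lambda_x^{-1}.
\end{align*}
The same identities give more generally $\lambda_{-a}=\lambda_a^{-1}$ and $\lambda_{a^-}=\lambda_a^{-1}$ for every $a\in A$.

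Next I would invoke the structural description $A=A(x)=\bigcup_{n\in\mathbb{N}_0}A_n$ from the preceding lemma and argue by induction on $n$ that $\lambda(A_n)\subseteq\langle\lambda_x\rangle$. The base case $\lambda(A_0)\subseteq\langle\lambda_x\rangle$ follows from the identities above, since $A_0=\{x,-x,x^-,0\}$. For the inductive step, take $a,b\in A_{n-1}$ with $\lambda_a,\lambda_b\in\langle\lambda_x\rangle$; the five ways in which $A_n$ is built from $A_{n-1}$ are handled uniformly by the two homomorphism properties, namely
\begin{align*}
\lambda_{a+b}=\lambda_{a\circ b}=\lambda_a\lambda_b,\quad \lambda_{a-b}=\lambda_a\lambda_b^{-1},\quad \lambda_{a\circ b^-}=\lambda_a\lambda_b^{-1},\quad \lambda_{a^-\circ b}=\lambda_a^{-1}\lambda_b,
\end{align*}
each of which lies in $\langle\lambda_x\rangle$. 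Hence $\lambda(A_n)\subseteq\langle\lambda_x\rangle$ for all $n$, so $\lambda(A)=\bigcup_n\lambda(A_n)\subseteq\langle\lambda_x\rangle$, which finishes the argument.

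The computations are entirely routine once both homomorphism properties are available, so there is no serious obstacle; the only real content is the observation that \cref{add} upgrades the purely multiplicative behaviour of $\lambda$ to additive behaviour as well, which is precisely what keeps every element of the generating set $A_0$, and every one of the five combining operations, inside the single cyclic group $\langle\lambda_x\rangle$. The one point worth stating explicitly is that $\lambda(A)$ is genuinely a subgroup, so that its generated subgroup is itself, which is guaranteed by \cref{action}.
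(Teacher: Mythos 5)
Your proof is correct and follows essentially the same route as the paper's: induction over the sets $A_i$ from the preceding lemma, with the base case handled via $\lambda_{-x}=\lambda_{x^-}=\lambda_x^{-1}$ and the inductive step via the additive homomorphism property from \cref{add} combined with the multiplicative one from \cref{action}. The only (harmless) addition is your explicit remark that $\lambda(A)$ is already a subgroup, which the paper leaves implicit.
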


\begin{proof}
It is sufficient to show that $\lambda_a=\lambda_x^{k}$, for some $k\in \mathbb{Z}$, for every $a\in A_i$, proceeding by induction on $i$. Since $A_0=\{0,x,-x,x^-\}$, by \cref{add} we have that $\id_A=\lambda_{x-x}=\lambda_x\lambda_{-x}$, hence $\lambda_{-x}=\lambda_x^{-1}$ and, by \cref{action}, $\lambda_{x^-}=\lambda_x^{-1}$. 
Now, suppose that $\lambda_a=\lambda_x^{k}$, for some $k\in \mathbb{Z}$, for every $a\in A_i$, and let $a_1,a_2$ be elements of $A_i$ and $k_1,k_2\in \mathbb{Z}$ such that $\lambda_{a_1}=\lambda_x^{k_1}$ and $\lambda_{a_2}=\lambda_x^{k_2}$. Then, by \cref{add}, $\lambda_{a_1+a_2}=\lambda_x^{k_1+k_2}$ and $\lambda_{a_1-a_2}=\lambda_x^{k_1-k_2}$. By \cref{action}, we have that $\lambda_{a_1\circ a_2}=\lambda_x^{k_1+k_2}$ and $\lambda_{a_1\circ a_2^-}=\lambda_{a_2^-\circ a_1}=\lambda_x^{k_1-k_2}$, hence the claim follows.
\end{proof}

\medskip

In the following lemma, we provide a necessary and sufficient condition that characterizes the trivial one-generator left braces. This result will be of crucial importance to give a precise description of one-generator left braces of multipermutation level $2$.

\begin{lemma}\label{carbra}
Let $A$ be a one-generator left brace and $x\in A$ such that $A = A(x)$. Then, the following statements are equivalent:
\begin{itemize}
    \item[1)] the map $\lambda_x$ fixes $x$;
    \item[2)] $A$ is a trivial left brace with $(A,+)$ the additive subgroup generated by $x$.
\end{itemize}
\end{lemma}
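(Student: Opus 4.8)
The implication $2)\Rightarrow 1)$ is immediate: in a trivial left brace one has $a\circ b=a+b$ for all $a,b$, so $\lambda_a=\id_A$ for every $a$, and in particular $\lambda_x$ fixes $x$. All the content lies in $1)\Rightarrow 2)$, and the plan is to prove it \emph{directly}, without passing through the retraction, by showing that the multiplicative cyclic subgroup generated by $x$ coincides as a set with the additive cyclic subgroup $\langle x\rangle_+$, and that on this set the two operations agree. Throughout I would use the elementary identity $a\circ b=a+\lambda_a(b)$, which is just a rewriting of the defining relation $\lambda_a(b)=-a+a\circ b$.

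The key step is to prove that $x^{\circ n}=nx$ for every $n\in\mathbb{Z}$, where $x^{\circ n}$ denotes the $n$-th power in $(A,\circ)$. Since $\lambda_x\in\Aut(A,+)$ fixes $x$ by hypothesis, it fixes $nx$ for every integer $n$. For $n\geq 0$ I would argue by induction: assuming $x^{\circ n}=nx$, one gets $x^{\circ(n+1)}=x\circ x^{\circ n}=x+\lambda_x(nx)=x+nx=(n+1)x$. To reach negative exponents, first note that $x\circ x^-=0$ forces $\lambda_x(x^-)=-x$; applying $\lambda_x^{-1}$ and using $\lambda_x^{-1}(x)=x$ gives $x^-=-x$. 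Then, recalling from \cref{action} that $\lambda_{x^{\circ n}}=\lambda_x^{\,n}$, the downward induction $x^{\circ(n-1)}=x^{\circ n}\circ(-x)=nx+\lambda_x^{\,n}(-x)=nx-x=(n-1)x$ completes the claim.

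Once $x^{\circ n}=nx$ is established, the set $S:=\{\,nx\mid n\in\mathbb{Z}\,\}=\langle x\rangle_+$ is exactly the underlying set of $\langle x\rangle_\circ$, and the powers multiply by $mx\circ nx=x^{\circ(m+n)}=(m+n)x=mx+nx$, so $\circ$ and $+$ coincide on $S$. Hence $S$ is a trivial left sub-brace of $A$ containing $x$; since $A(x)$ is the smallest such sub-brace while $S\subseteq A(x)$ by closure of $A(x)$ under $\circ$, we get $A=A(x)=S=\langle x\rangle_+$, and because $\circ=+$ holds on all of $A=S$ the brace $A$ is trivial, which is exactly $2)$. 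The main obstacle I anticipate is not any single computation but pinning down the identity $x^{\circ n}=nx$, i.e.\ that the multiplicative powers of $x$ remain inside the additive cyclic subgroup: this is precisely what converts the fixed-point hypothesis on the single generator into a global structural statement, and once it is secured, closure of $S$ under all four brace operations and the resulting triviality follow formally.
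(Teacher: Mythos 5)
Your proof is correct and follows essentially the same route as the paper: both arguments establish $x^{\circ n}=nx$ by induction (the paper handles positive powers of $x$ and of $-x$ separately, while you unify the negative case via $x^-=-x$) and then conclude that the additive cyclic subgroup generated by $x$ is a trivial left sub-brace containing $x$, forcing $A=A(x)=\langle x\rangle_+$. Your write-up is in fact slightly more detailed than the paper's, which leaves the inductions as ``easy to show.''
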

\begin{proof}
Suppose that $1)$ holds. At first, we denote by $ka$ the element of $A$ given by $ka:=\underbrace{a+ \cdots +a}_{k\ times}$, for all $a\in A$, $k\in\mathbb{N}$. Similarly, let $a^{\circ k}$ be the element of $A$ given by $a^{\circ k}:=\underbrace{a\circ\cdots \circ a}_{k\ times}$. 
Since $\lambda_x$ fixes $x$, it is easy to show by induction on $k$ that $kx=x^{\circ k}$ and $k(-x)=(-x)^{\circ k} =(x^-)^{\circ k}$, for every $k\in \mathbb{N}$. In this way, we obtain that $kx\circ hx=kx+\lambda_{kx}(hx)=kx+hx$, for all $k,h\in \mathbb{Z}$, hence the additive subgroup generated by $x$ is a trivial left brace containing $x$. Since $A=A(x)$, we obtain  that $1)$ implies $2)$. The converse implication is trivial. 
\end{proof}

Now, we are able to prove a characterization of one-generator left braces of multipermutation level $2$ by means of transitive cycle bases. 

\begin{theor}\label{th:one-g-2}
Let $A$ be a left brace. Then, the following conditions are equivalent:
\begin{itemize}
    \item[1)] $A$ is a one-generator left brace of multipermutation level $2$;
    \item[2)] $A$ has a transitive cycle base $X$, with $|X|>1$, that is an indecomposable cycle set having multipermutation level $1$.
\end{itemize}
\end{theor}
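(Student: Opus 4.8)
The plan is to prove the two implications separately, using throughout that a transitive cycle base $X$ of $A$ carries the cycle set structure $x\cdot y:=\lambda_x^{-1}(y)$, so that $\sigma_x=\lambda_x^{-1}|_X$ and $X$ is an indecomposable cycle set by \cite[Proposition 9]{rump2020one}; such a cycle set has multipermutation level $1$ precisely when all the maps $\sigma_x$ ($x\in X$) coincide. The key algebraic inputs will be \cref{add} (so that $\lambda$ is additive as soon as $A$ has level $\le 2$), \cref{act} ($\lambda$-cyclicity in the one-generator case) and \cref{carbra}.

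First I would prove $1)\Rightarrow 2)$. Assume $A=A(x)$ has level $2$. By \cref{act} the image $\lambda(A)$ equals $\langle\lambda_x\rangle$, and since $A$ is not trivial \cref{carbra} gives $\lambda_x(x)\ne x$; hence $X:=\{\lambda_x^k(x)\mid k\in\mathbb{Z}\}$, the $\lambda(A)$-orbit of $x$, satisfies $|X|>1$. To see that $X$ is a transitive cycle base I would verify that $B:=\langle X\rangle_+$ is a sub-brace: every $\lambda_a$ lies in $\langle\lambda_x\rangle$ and each power $\lambda_x^j$ preserves $B$, so closure under $\circ$ follows from $a\circ b=a+\lambda_a(b)$ and closure under inverses from $a^-=\lambda_a^{-1}(-a)$; as $x\in B$ and $A=A(x)$, this forces $B=A$, i.e. $X$ generates $(A,+)$. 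Finally I would use the level-$2$ hypothesis in the form $\lambda_a(b)-b\in\Soc(A)$ for all $a,b$: writing $u=\lambda_a(x)\in X$ gives $u-x\in\Soc(A)$, so $u-v\in\Soc(A)$ for all $u,v\in X$, whence by additivity (\cref{add}) $\lambda_u\lambda_v^{-1}=\lambda_{u-v}=\id_A$ and $\sigma_u=\sigma_v$; thus $X$ has level $1$.

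For $2)\Rightarrow 1)$, assume $X$ is a transitive cycle base of level $1$ with $|X|>1$. Since all $\sigma_u$ agree, for $u,v\in X$ the automorphism $\lambda_u\lambda_v^{-1}=\lambda_{u\circ v^-}$ fixes $X$ pointwise; because $X$ generates $(A,+)$ and $\lambda_{u\circ v^-}\in\Aut(A,+)$, it must be $\id_A$, i.e. $u\circ v^-\in\Soc(A)$. Hence all elements of $X$ project to a single $\xi\in A/\Soc(A)$, so $(A/\Soc(A),+)$ is cyclic generated by $\xi$; moreover $\lambda_\xi(\xi)=[\lambda_{x_0}(x_0)]=\xi$ since $\lambda_{x_0}(x_0)\in X$, and an additive automorphism fixing a generator is the identity, so $\lambda_\xi=\id$ and $A/\Soc(A)$ is trivial. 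As $|X|>1$ forces $A$ to be non-trivial, $A$ has level exactly $2$. For the one-generator property I would fix $x_0\in X$ and note that, $\langle\sigma_{x_0}\rangle=\mathcal{G}(X)$ being transitive, $X=\{\lambda_{x_0}^j(x_0)\mid j\in\mathbb{Z}\}$; an easy induction using $\lambda_{x_0}^{\pm1}(w)=\mp x_0^{(\mp)}\,\text{-shifts of }x_0^{(\mp)}\circ w$ places all of $X$ inside $A(x_0)$, and since $X$ generates $(A,+)$ we conclude $A=A(x_0)$.

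The point demanding the most care will be keeping the two directions free of circular reasoning: in $2)\Rightarrow 1)$ one cannot yet invoke \cref{add} or \cref{act}, so the collapse of $X$ to a single point in $A/\Soc(A)$ must be obtained purely from the fact that a $\lambda$-automorphism fixing the additive generating set $X$ is trivial, with the level-$2$ conclusion (and hence additivity of $\lambda$) becoming available only afterwards. The technical crux is thus the verification that $\langle X\rangle_+$ is closed under $\circ$ in the first direction and that $\lambda_\xi$ fixes the generator $\xi$ in the second; the identifications of $\sigma_x$ with $\lambda_x^{-1}|_X$ and of the level-$2$ condition with $\lambda_a(b)-b\in\Soc(A)$ are routine.
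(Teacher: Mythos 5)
Your argument is correct, but it takes a genuinely different route from the paper's, most substantially in the direction $2)\Rightarrow 1)$. For $1)\Rightarrow 2)$ the paper obtains the additive generation of $A$ by the orbit $X$ from \cite[Proposition~6.4]{CeSmVe19} and then deduces $|\Ret(X)|=1$ by identifying $\Ret(X)$ with $\{\lambda_x^k(x)+\Soc(A)\ |\ k\in\mathbb{Z}\}$ and invoking indecomposability of the retraction; you instead check by hand that $\langle X\rangle_+$ is a sub-brace (closure under $\circ$ and under $\circ$-inverses via $a\circ b=a+\lambda_a(b)$ and $a^-=\lambda_a^{-1}(-a)$, using that $\lambda(A)=\langle\lambda_x\rangle$ preserves $X$), and you get level $1$ of $X$ directly from $b-\lambda_a(b)\in\Soc(A)$ combined with the additivity of $\lambda$ from \cref{add}; this is more self-contained and avoids the external citation. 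For $2)\Rightarrow 1)$ the paper's engine is the inductive identity $\lambda_{\epsilon_1x_1+\cdots+\epsilon_nx_n}=\lambda_x^{\sum\epsilon_i}$, from which both $A=A(x)$ and, via \cref{add}, the level-$2$ claim are extracted; you replace this computation by a structural argument in $A/\Soc(A)$: since $\lambda_{u\circ v^-}$ fixes the additive generating set $X$ pointwise it is trivial, so $X$ collapses to a single class $\xi$, the quotient is additively cyclic with generator $\xi$, and $\lambda_\xi$ fixes that generator, hence the quotient is a trivial brace. This is shorter, and you are right that it must (and does) avoid any circular appeal to \cref{add} in that direction. The only costs are two routine facts left implicit --- that additive and multiplicative cosets of the ideal $\Soc(A)$ coincide, and that $\Soc(A/\Soc(A))$ is an additive subgroup, so triviality of $\lambda_\xi$ propagates to every $n\xi$ --- together with the garbled displayed identity for $\lambda_{x_0}^{\pm1}(w)$, which should simply read $\lambda_{x_0}(w)=-x_0+x_0\circ w$ and $\lambda_{x_0}^{-1}(w)=-x_0^-+x_0^-\circ w$.
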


\begin{proof}
Suppose that $A$ is a one-generator left brace of multipermutation level $2$ and let $x$ be such that $A = A(x)$. By \cref{act}, the orbit of $x$ under the action of $A$ by $\lambda$ is equal to $X:=\{\lambda_x^k(x)\ | \ k\in \mathbb{Z} \} $. Moreover, since $A$ has multipermutation level $2$, by \cref{carbra} it follows that $|X|>1$. By \cite[Proposition 6.4]{CeSmVe19}, the additive subgroup generated by $X$ is equal to $A$, hence $X$ is a transitive cycle base. Moreover, $X$ is an indecomposable cycle set having permutation group equal to the cyclic subgroup generated by $\lambda_{x_{|_{X}}}$. Since $X$ is a cycle base, we have that $\lambda_x=\lambda_y$ if and only if $\lambda_{x_{|_{X}}}=\lambda_
{y_{|_{X}}} $, hence we can identify $\Ret(X)$ with $\{\lambda_x^k(x) + \Soc(A) \ |\ k\in \mathbb{Z} \}$. Since $A$ has multipermutation level $2$, we have that $\lambda_{\lambda_x^k(x) + \Soc(A)} = \lambda_{x + \Soc(A)} = \id_{A/\Soc(A)}$, for every $k\in \mathbb{N}_0$, and since $\Ret(X) $ is again indecomposable it follows that $|\Ret(X)|=1$, therefore $2)$ follows.

Now, suppose that $2)$ holds and let $x$ be an element of $X$. At first, we show that $$\lambda_{\epsilon_1 x_1 + \cdots +\epsilon_n x_n} 
= \lambda_x^{\ \sum \epsilon_i},
$$
for all $n\in \mathbb{N}$, $\epsilon_1,\ldots,\epsilon_n\in \{1,-1\} $, and $x_1,\ldots,x_n\in X$.\\
For $n = 1$, if $\epsilon_1 = 1$, the claim follows because $X$ has multipermutation level $1$, if $\epsilon_1=-1$ and $x_1\in X$, we obtain that 
$$
\lambda_0=\lambda_{-x_1+x_1}=\lambda_{(-x_1)\circ \lambda_{(-x_1)^-}(x_1)}=\lambda_{-x_1}\lambda_{\lambda_{(-x_1)^-}(x_1)}
$$
and, since $\lambda_{(-x_1)^-}(x_1)\in X$, we have that $\lambda_{\lambda_{(-x_1)^-}(x_1)} =\lambda_x$, hence $\lambda_{-x_1}=\lambda_x^{-1}$.
Now, suppose that $n$ is a natural number greater than $1$. Therefore
\begin{eqnarray}
\lambda_{\epsilon_1 x_1 + \cdots +\epsilon_n x_n} &=& \lambda_{\epsilon_1 x_1 \circ( \lambda_{(\epsilon_1x_1)^-}(\epsilon_2x_2)+\cdots+\lambda_{(\epsilon_1x_1)^-}(\epsilon_nx_n))} \nonumber \\
&=& \lambda_{\epsilon_1 x_1}\lambda_{ \lambda_{(\epsilon_1x_1)^-}(\epsilon_2x_2)+\cdots+\lambda_{(\epsilon_1x_1)^-}(\epsilon_nx_n)} \nonumber \\
&=& \lambda_x^{\ \sum \epsilon_i} \nonumber 
\end{eqnarray}
where $\lambda_{ \lambda_{(\epsilon_1x_1)^-}(\epsilon_2x_2)+\cdots+\lambda_{(\epsilon_1x_1)^-}(\epsilon_n x_n)} = \lambda_x^{\ \sum\limits_{i=1}^{n-1} \epsilon_i}$ and $\lambda_{\epsilon_1 x_1}=\lambda_x^{\epsilon_1}$, by inductive hypothesis. Then, we have that $X=\{\lambda_x^k(x)\,|\  k\in \mathbb{Z} \}$. By induction on $k$, it is easy to show that $\lambda_x^k(x)$ and $\lambda_x^{-k}(x)$ belong to $A(x)$, for every $k\in \mathbb{N}$, hence $A=X_{+}\subseteq A(x)$. To show $1)$, since $|X|>1$,  by \cref{add} it is sufficient to see that $\lambda_{a+b}=\lambda_{a\circ b}$, for all $a,b\in A$.\\
Finally, let $a,b\in A$, $m,n\in \mathbb{N}$ $\epsilon_1,\ldots,\epsilon_m,\eta_1,\ldots,\eta_n\in \{-1,1\} $ and $x_1,\ldots,x_m$, $y_1,\ldots,y_n\in X$ such that $a = \epsilon_1 x_1 + \cdots + \epsilon_m x_m$ and $b =\eta_1 y_1 + \cdots +\eta_n y_n$. Then,
\begin{eqnarray}
\lambda_{a+b} &=& \lambda_{\epsilon_1 x_1 + \cdots + \epsilon_m x_m+\eta_1 y_1 + \cdots +\eta_n y_n} \nonumber \\
&=& \lambda_x^{\sum \epsilon_i+\sum \eta_j} \nonumber \\
&=& \lambda_x^{\sum \epsilon_i}\lambda_x^{\sum \eta_j} \nonumber \\
&=& \lambda_a\lambda_b \nonumber \\
&=& \lambda_{a\circ b}, \nonumber
\end{eqnarray}
hence the claim follows.
\end{proof}
 
\medskip

As a final result of this paper, we give a precise description of one-generator left braces of multipermutation level $2$ answering \cite[Question 5.5]{smock}. 
We point out that such a description has already contained in \cite[Theorem 2]{rump2020one}; however, here we provide a different proof about it.

\begin{cor}
A one-generator left brace of multipermutation level $2$ is equivalent to an abelian group $(A,+)$ and an automorphism $\psi$ of $(A,+)$ such that
\begin{itemize}
\item[1)] there exists an element $x$ of $A$ whose orbit $\{\psi^z(x) \,| \ z\in \mathbb{Z}\}$ has size greater than $1$ and generates $(A,+)$;
\item[2)] if \ $\sum\limits_{i=1}^{k}a_i\psi^{z_i}(x) = 0$ implies \ $\psi^{\sum\limits_{i=1}^{k}a_i} = \id_A$, for all  $k\in \mathbb{N}_0$ and $a_1,\ldots,a_k$, $z_1,\ldots,z_k\in \mathbb{Z}$.
 \end{itemize}
 \end{cor}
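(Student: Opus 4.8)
The plan is to derive both implications of the corollary from \cref{th:one-g-2}, translating the abstract condition of possessing a transitive cycle base of multipermutation level $1$ into the explicit conditions 1) and 2) phrased in terms of the automorphism $\psi$. Throughout, $\psi$ will play the role of $\lambda_x$.

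For the forward direction, suppose $A = A(x)$ is a one-generator left brace of multipermutation level $2$. By \cref{act} the group $\lambda(A)$ is generated by $\lambda_x$, so I set $\psi := \lambda_x$, which lies in $\Aut(A,+)$ by \cref{action}. As in the proof of \cref{th:one-g-2}, the orbit $X = \{\psi^z(x) \mid z \in \mathbb{Z}\}$ is a transitive cycle base of $A$; it generates $(A,+)$ and, by \cref{carbra}, it has more than one element (otherwise $\lambda_x$ would fix $x$ and $A$ would be trivial), which is exactly condition 1). To obtain condition 2), I would first note that since $X$ has multipermutation level $1$ all its left multiplications coincide, and because $X$ generates $(A,+)$ this forces $\lambda_y = \lambda_x = \psi$ for every $y \in X$. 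Writing an arbitrary $a \in A$ as $a = \sum_i a_i \psi^{z_i}(x)$ and using that $\lambda$ is additive on a level-$2$ brace (\cref{add}), I get $\lambda_a = \psi^{\sum_i a_i}$. Applying this to any relation $\sum_i a_i \psi^{z_i}(x) = 0$ yields $\psi^{\sum_i a_i} = \lambda_0 = \id_A$, which is condition 2).

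For the converse I would start from the data $(A,\psi,x)$ and reconstruct the multiplicative structure through the $\lambda$-map. The key step is to define $\lambda_a := \psi^{\sum_i a_i}$ for $a = \sum_i a_i \psi^{z_i}(x)$; condition 2) is precisely what guarantees this is well defined, since two representations of the same element differ by a relation whose coefficient sum is sent to $\id_A$ by $\psi$. Each $\lambda_a$ is then an automorphism of $(A,+)$, and $a \mapsto \lambda_a$ is additive by construction. Setting $a \circ b := a + \lambda_a(b)$, I would verify the cocycle identity $\lambda_{a\circ b} = \lambda_a \lambda_b$: the only point to check is $\lambda_{\lambda_a(b)} = \lambda_b$, which holds because applying a power of $\psi$ to $b$ permutes the generators $\psi^{z}(x)$ without changing the coefficient sum. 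This identity, together with $\lambda_0 = \id_A$ and $\lambda_a \in \Aut(A,+)$, makes $(A,+,\circ)$ a left brace with $\lambda$ as its associated map.

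It then remains to see that this brace is one-generator of multipermutation level $2$. Since $\psi^z(x) = \lambda_x^z(x) \in A(x)$ for all $z$ and $X$ generates $(A,+)$, we get $A = A(x)$. Finally I would check that $X$ is a transitive cycle base (a single $\lambda$-orbit generating $(A,+)$) which, viewed as a cycle set, is indecomposable of multipermutation level $1$: the latter because every $y \in X$ has coefficient sum $1$ and hence $\lambda_y = \psi$, so all its left multiplications agree and $\Ret(X)$ is trivial, while $|X| > 1$ rules out the degenerate case. \cref{th:one-g-2} then delivers that $A$ is one-generator of level $2$. I expect the main obstacle to be the well-definedness of $\lambda$ in the converse, together with its mirror image in the forward direction: both rest on the interplay between the additivity of $\lambda$ for level-$2$ braces (\cref{add}) and the fact that multipermutation level $1$ of $X$ collapses all the $\lambda_y$, $y \in X$, to the single automorphism $\psi$.
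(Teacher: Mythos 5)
Your proposal is correct and follows essentially the same route as the paper: both directions go through \cref{th:one-g-2}, with $\psi=\lambda_x$ and the additivity of $\lambda$ from \cref{add} giving condition 2) in the forward direction, and with the well-defined extension $\lambda_a=\psi^{\sum a_i}$ (justified by condition 2)) together with the verification of $\lambda_{a\circ b}=\lambda_a\lambda_b$ reconstructing the brace in the converse. The point you flag as the main obstacle --- well-definedness of $\lambda$ on $(A,+)$ --- is exactly the step the paper handles by extending $\lambda'$ from $X$ to a group homomorphism using 1) and 2).
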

\begin{proof}
Suppose that $A$ is a one-generator left brace of multipermutation level $2$. By \cref{th:one-g-2}, there exists a transitive cycle base $X$ with $|X| > 1$. Let $x\in X$ and $\psi:=\lambda_x$. Then, $\psi$ is an automorphism of $(A,+)$ and the additive group generated by $X=\{\psi^z(x)\, | \ z\in \mathbb{Z} \}$ is equal to $A$. 
Now, if $\sum\limits_{i=1}^{k}a_i\psi^{z_i}(x) = 0$, by \cref{th:one-g-2}-$2)$ and  \cref{add}, it follows that $\psi^{\sum\limits_{i=1}^{k}a_i} = \lambda(\sum\limits_{i=1}^{k}a_i\psi^{z_i}(x)) = \id_A$.\\
Conversely, suppose that $(A,+)$ is an abelian group and $\psi$ an automorphism of $A$ such that $1)$ and $2)$ hold and let $x$ be the element of $A$ such that its orbit $X:=\{\psi^z(x)\, | \ z\in \mathbb{Z}\}$ generates $(A,+)$. Now, by $1)$ and $2)$, the map $\lambda':X\to \Aut(A,+)$ given by $\lambda'(t):=\psi$, for every $t\in X$, can be extended to a group homomorphism $\lambda$ from $(A,+)$ into $\Aut(A,+) $. Now, let $\circ$ be the binary operation on $A$ given by $a\circ b=a+\lambda_a(b)$, for all $a,b\in A$. To prove that $(A,+,\circ)$ is a left brace, by \cite[Proposition 2]{rump2007braces} it remains to show that $\lambda(a\circ b)=\lambda_a\lambda_b$, for all $a,b\in A$. Therefore, let $a,b\in A$ and suppose that $a=\sum a_i \psi^{z_i}(x)$ and $b=\sum b_i \psi^{z_i}(x)$. 
Then,
\begin{eqnarray}
\lambda(a\circ b) &=& \lambda(a+\lambda_a(b)) \nonumber \\
&=& \lambda(a)\lambda(\lambda_a(b)) \nonumber \\
&=& \lambda(a)\lambda(\lambda_a(\sum b_i \psi^{z_i}(x)))\nonumber \\
&=& \lambda(a)\lambda(\sum b_i \lambda_a(\psi^{z_i}(x))) \nonumber \\
&=& \lambda(a)\psi^{\sum b_i} \nonumber \\
&=& \lambda(a)\lambda(b),\nonumber 
\end{eqnarray}
hence $(A,+,\circ)$ is a left brace. \\
Finally, since $X$ generates $A$ and $\lambda(s)=\psi$, for every $s\in X$, by $2)$ of the previous theorem, we have that $A$ is a one-generator left brace of multipermutation level $2$.
\end{proof}

\smallskip


\bibliographystyle{elsart-num-sort}
\bibliography{Bibliography}

\end{document}